\newtheorem{thm}{Theorem}[section]
\newtheorem*{thm*}{Theorem}
\newtheorem{proposition}[thm]{Proposition}
\newtheorem{lemma}[thm]{Lemma}
\newtheorem{cor}[thm]{Corollary}
\theoremstyle{definition}
\newtheorem{remark}[thm]{Remark}
\newtheorem{claim}[thm]{Claim}
\renewcommand{\det}{{\rm det}\,}
\renewcommand{\deg}{{\rm deg}\,}
\newcommand{\by}[1]{\stackrel{#1}{\longrightarrow}}
\newcommand{\rank}{{\rm rank}\,}
\newcommand{\sEnd}{\mbox{${\sE}{nd}$}}
\newcommand{\sHom}{\mbox{${\sH}{om}$}}
\newcommand{\coker}{{\rm coker}\,}
\newcommand{\boxtensor}{{\Box\kern-9.03pt\raise1.42pt\hbox{$\times$}}}
\newcommand{\tensor}{\otimes}
\newcommand{\bP}{{\mathbf P}}
\newcommand{\sA}{{\mathcal A}}
\newcommand{\sC}{{\mathcal C}}
\newcommand{\sB}{{\mathcal B}}
\newcommand{\sE}{{\mathcal E}}
\newcommand{\sF}{{\mathcal F}}
\newcommand{\sH}{{\mathcal H}}
\newcommand{\sL}{{\mathcal L}}
\newcommand{\sM}{{\mathcal M}}
\newcommand{\sO}{{\mathcal O}}
\newcommand{\sT}{{\mathcal T}}  
\newcommand{\sU}{{\mathcal U}}
\newcommand{\sV}{{\mathcal V}}
\newcommand{\sW}{{\mathcal W}} 
\renewcommand{\tilde}{\widetilde}
\renewcommand{\hat}{\widehat}
\numberwithin{equation}{section}
\newcounter{elno}                
\newcounter{example}[section]
\def\theexample{\thesection.\arabic{example}}
\begin{document}

\title{semistablity of syzygy bundles on projective spaces in positive
characteristics}
{}

\author{V. Trivedi}
\address{School of Mathematics, Tata Institute of
Fundamental Research,
Homi Bhabha Road, Mumbai-400005, India}
\email{vija@math.tifr.res.in}

\subjclass{14L30}
\thanks{preliminary version}
\date{}
\maketitle
\section{Introduction}
 Let  $k$ be an algebraically closed field.
 For an integer  $d>0$, let $\sV_d$ be the 
vector bundle on
$\bP_k^n$ given by the exact sequence
\begin{equation}\label{tt} 0 \by{} {\sV_d} \by{} H^0(\mathbf{P}_k^n,
\sO_{{\mathbf
P}^n_k}(d))\tensor
\sO_{{\mathbf P}^n_k} \by{\eta} \sO_{{\mathbf P}^n_k}(d) \by{}
0,\end{equation}
where $\eta$ is the evaluation map.

It was proved by Flenner [F] that if characteristic$~k = 0$ then $\sV_d$ 
is
a semistable vector bundle. 
 He uses this as an  crucial ingredient to prove his restriction 
theorem 
for 
torsion free  semistable sheaves on a normal projective variety, defined 
over a 
field of characteristic $0$, to a general hypersurface of degree $d$, 
where $d$ has 
a
lower bound in terms of degree of the ambient variety and degree 
and 
rank of the sheaf. He reduces the 
argument  to projective space and then uses the semistability property of 
$\sV_d$.

In characteristic~$k = p > 0$,  A. Langer ([L], proved the following 
restriction theorem for strongly semistablilty:

\vspace{5pt}

\begin{thm}\label{t1}(Langer)~~Let $(X, H)$ be a 
smooth 
$n$-dimensional $(n\geq 2)$ polarized 
variety with globally generated tangent bundle $\sT_X$. Let $E$ be a
$H$-semistable torsion free sheaf of rank $r\geq 2$ on $X$. Let 
 $d$ be an integer such that 
$$d > \frac{r-1}{r}\Delta(E)H^{n-2}+\frac{1}{r(r-1)H^n}$$
and 
$$\frac{{\binom{d+n}{d}}-1}{d} > H^n{\mbox{max}}\{\frac{r^2-1}{4}, 
1\}+1.$$
If $\mbox{characteristic}~k > d$ then the restriction $E_D$ is strongly 
$H$-semistable 
for a very general $D\in |dH|$.\end{thm}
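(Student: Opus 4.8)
The plan is to establish the two properties whose conjunction is strong $H$-semistability of $E_D$ for very general $D\in|dH|$: that $E_D$ is $H$-semistable, and that no Frobenius pullback $F^{e*}(E_D)$ is destabilized. Throughout I would work with the normalized slope $\mu(\sF)=c_1(\sF)\cdot H^{n-1}/\rank\sF$, the discriminant $\Delta(E)=2rc_2(E)-(r-1)c_1(E)^2$, and the asymptotic defects $L_{\max}(\sF)=\bar\mu_{\max}(\sF)-\mu(\sF)$ and $L_{\min}(\sF)=\mu(\sF)-\bar\mu_{\min}(\sF)$, where $\bar\mu_{\max}$ and $\bar\mu_{\min}$ are the limits of $\mu_{\max}(F^{e*}\sF)/p^{e}$ and $\mu_{\min}(F^{e*}\sF)/p^{e}$; strong semistability of $E_D$ is then precisely the statement $L_{\max}(E_D)=L_{\min}(E_D)=0$.

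First I would treat ordinary semistability of the restriction by the Flenner--Bogomolov effective method. If $E_D$ were unstable for $D$ ranging over a dense subset of $|dH|$, the maximal destabilizing subsheaves would fit into a bounded family over the universal divisor in $|dH|$. A dimension count on the associated relative Quot scheme bounds $\mu_{\max}(E_D)-\mu(E_D)$ in terms of $\Delta(E)H^{n-2}$ and $1/d$; the first numerical hypothesis $d>\frac{r-1}{r}\Delta(E)H^{n-2}+\frac{1}{r(r-1)H^n}$ is exactly the threshold forcing this gap to be negative, so the locus of bad $D$ is a proper closed subset and the general restriction is semistable.

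Next I would upgrade semistability to strong semistability using the canonical (Cartier) connection on Frobenius pullbacks together with the hypothesis that $\sT_X$ is globally generated. If $F^{e*}(E_D)$ is destabilized, take the least such $e$; a maximal destabilizing subsheaf $W\subset F^{e*}(E_D)$ cannot be horizontal for the canonical connection, so the second fundamental form produces a nonzero $\sO_D$-linear map $W\to (F^{e*}(E_D)/W)\otimes\Omega_D^1$. This bounds the slope gap of $W$ by a multiple of $\mu_{\max}(\Omega_D^1)$; global generation of $\sT_X$ gives $\mu_{\min}(\sT_X)\ge 0$, hence $\mu_{\max}(\Omega_X^1)\le 0$, so the Frobenius cannot manufacture instability on $X$, and the only source of destabilization on $D$ is the conormal contribution $N_{D/X}^{*}\cong\sO_D(-dH)$, whose effect is governed by $d$ and $H^n$. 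Feeding this into the positive-characteristic Bogomolov inequality for the semistable sheaf $E_D$ converts the defect $L_{\max}(E_D)$ into a bound involving $\Delta(E)H^{n-2}$, $H^n\max\{(r^2-1)/4,1\}$, and $d$; the second numerical hypothesis $\frac{\binom{d+n}{d}-1}{d}>H^n\max\{(r^2-1)/4,1\}+1$ is the quantitative cutoff making this bound force $L_{\max}(E_D)=L_{\min}(E_D)=0$.

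The hard part, and the step I expect to be the main obstacle, is reconciling the single-divisor restriction estimate with the uniform-in-$e$ control required for strong semistability: the Flenner bound only sees the first Frobenius level, whereas $L_{\max}$ is an asymptotic quantity over all $e$. The hypothesis $\mbox{characteristic}~k>d$ is what closes this gap, since it guarantees that the degree-$d$ defining equation of $D$ is not a $p$-th power and that the second-fundamental-form estimate on $X$ restricts compatibly to $D$; balancing the globally generated tangent bundle on $X$, the Bogomolov inequality on $D$, and the bound $p>d$ so that both numerical hypotheses simultaneously bite is the crux of the argument.
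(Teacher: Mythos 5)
You are proving a statement the paper itself does not prove: Theorem~\ref{t1} is quoted from Langer [L], and the only trace of its proof visible here is the inequality recorded in the proof of Corollary~\ref{c4}, namely that for very general $D\in|dH|$ and $m$ minimal such that the Harder--Narasimhan quotients of $F^{m*}(E|_D)$ are strongly semistable, one has $\frac{d}{\max\{(r^2-1)/4,\,1\}}\leq \mu_i(F^{m*}E|_D)-\mu_{i+1}(F^{m*}E|_D)\leq H^n\cdot\mu_{max}(\sV_d^*|_D)$. Your first paragraph (the Flenner--Bogomolov boundedness and dimension count giving semistability of $E_D$ from the first numerical hypothesis) is consistent with the left-hand side of this mechanism. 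The genuine gap is in your second and third paragraphs: the upper bound does \emph{not} come from the canonical connection on the fixed divisor $D$ together with its conormal bundle. An estimate using only $N_{D/X}^{*}\cong\sO_D(-dH)$ and $\mu_{\max}(\Omega_X^1)\leq 0$ bounds the slope gap by a quantity of order $dH^n$, which grows linearly in $d$ and so can never contradict the lower bound $d/\max\{(r^2-1)/4,1\}$, which also grows linearly in $d$; your route gives bounds of the wrong order of magnitude, and the second numerical hypothesis would never bite. That hypothesis involves $\binom{d+n}{d}-1=h^0(\bP^n_k,\sO(d))-1=\rank\sV_d$, and this rank can only enter by letting $D$ \emph{move} in the full linear system $|dH|$: for very general $D$ the maximal destabilizing subsheaf of $F^{m*}(E|_D)$ must vary with $D$, and differentiating along the linear system produces a nonzero map whose target involves the restricted syzygy bundle $\sV_d^*|_D$ of the sequence~(\ref{tt}). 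That is the source of the upper bound $H^n\mu_{max}(\sV_d^*|_D)$, and semistability of $\sV_d$ then makes it of size roughly $d^2/(\binom{d+n}{d}-1)$, small enough for the second hypothesis to force the contradiction.

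Relatedly, your account of where $\mbox{characteristic}~k>d$ enters is not the actual one. It has nothing to do with the defining equation of $D$ being a $p$-th power: it is needed because Langer invokes Flenner's proof that $\sV_d$ is semistable, and that proof works only for $p>d$ (or $p=0$), since only then is $\sV_d$ filtered by the bundles $S^{m}(\sV_1)\tensor\sO(d-m)$ with these as the only homogeneous subquotients; for $p\leq d$ extra subquotients appear and the argument breaks down. The introduction states this explicitly, and it is the entire point of the paper: Theorem~\ref{c1} and Proposition~\ref{p2} supply semistability of $\sV_d$, respectively the estimate on $\mu_{max}(\sV_d^*)$, without any hypothesis on $p$, which is precisely how Corollary~\ref{c4} removes the restriction $p>d$ from Theorem~\ref{t1}. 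Since your proposal never introduces $\sV_d$ at all, it cannot recover the threshold $\frac{\binom{d+n}{d}-1}{d}>H^n\max\{\frac{r^2-1}{4},1\}+1$ and therefore does not prove the theorem.
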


\vspace{5pt}

However,  he has to assume that  
$\mbox{characteristic}~k = p > d$;
as he uses the proof of the result of Flenner, and more specifically the 
semistability property of $\sV_d$. In particular, for a given $p > 0$ his 
result is valid for 
at most finitely many $d$, in fact the set of such $d$ can be empty. In 
the end of the proof of  
Theorem~\ref{t1}, Langer remarked that the assumption on the 
characteristic 
could be 
removed if there is a positve answer to one of the following questions:

{\it Is $\sV_d$ a semistable bundle, 
for arbitrary $n$, $d$, and $p = \mbox{char}~k$?, or is there a good 
estimate on $\mu_{max}(\sV_d^{\star})$?}

 We recall that 
if $\mbox{char}~k = p > d$ or $\mbox{char}~k = 0$, then 
 $\sV_d$ is filtered by $S^m(\sV_1)\tensor \sO(d-m)$, and these are the 
only 
possible subquotients of $\sV_d$ as homogeneous bundles.
 However, as soon as $d$ exceeds 
$p$, many more subquotients of $\sV_d$ occur, and therefore 
argument of [F] is not applicable.

In this paper, we prove semistability of the syzygy bundle $\sV_d$, 
where $\mbox{char}~k = p>0$, under the 
conditions as given in Theorems~\ref{c1}. This 
 provides evidence in favour of a
positive solution in general.

\begin{thm}\label{c1}The vector bundle
 $\sV_d$,  given by the short exact sequence~(\ref{tt}), is
semistable (in fact stable) in any of the following cases:
\begin{enumerate} 
\item $\bP^n_k = \bP^2_k$ and $d\geq 1$, or
\item $d= a_ip^i+a_mp^m$ is the $p$-adic expansion of the integer 
$d$, for 
any $m\geq 0$, or
\item $d = (a_{0}+ a_{1}p + \cdots + a_{m}p^{m})p^{i_0}$
is
the $p$-adic expansion of
the integer $d$, where  $a_0$ and $a_m$ are nonzero integers,
such that one of the following holds,
\begin{enumerate}
\item $p \leq n$ and $a_{2},\ldots, a_{m} \geq 1$, or
\item $p \geq n$ and $a_{2}, \ldots, a_{m} \geq p-n+1$, or
\item $n\geq (a_0+a_1p+\cdots +a_mp^m)/p$.
\end{enumerate}

\end{enumerate}
\end{thm}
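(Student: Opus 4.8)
The plan is to exploit that $\sV_d$ is a homogeneous (equivariant) bundle for the natural $SL_{n+1}$-action on $\bP^n_k$ and to reduce semistability to a combinatorial statement about submodules of $S^dV$, where $V=H^0(\bP^n_k,\sO(1))$. Since the maximal destabilizing subsheaf of $\sV_d$ is canonical, it is invariant under the connected group $SL_{n+1}$, hence homogeneous; passing to its saturation (which only changes it in codimension $\geq 2$, so slopes are unaffected) it becomes a homogeneous subbundle. Thus it suffices to rule out destabilizing homogeneous subbundles. Fixing the base point $p_0=[1:0:\cdots:0]$ with parabolic stabilizer $P$, homogeneous subbundles of $\sV_d$ correspond bijectively to $P$-submodules $M$ of the fibre $K:=\ker(\mathrm{ev}_{p_0})=\langle\, x^b : b\neq(d,0,\ldots,0)\,\rangle\subseteq S^dV$. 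Because distinct monomials have distinct $T$-weights, every such $M$ is spanned by monomials; it is in addition stable under the Levi $L$ and the unipotent radical $U$ of $P$.

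Next I would set up the slope dictionary. A weight computation (fixed by matching $\deg\sO(d)=d$ and $\deg(S^dV\tensor\sO)=0$) gives $\deg\sE_M=\tfrac{n+1}{n}\big(\sum_{x^b\in M}b_0\big)-\tfrac{d}{n}\dim M$ for the subbundle $\sE_M$ attached to $M$, where $b_0$ denotes the exponent of $x_0$; hence
$$\mu(\sE_M)=\frac{n+1}{n}\,\overline{b_0}(M)-\frac{d}{n},\qquad \overline{b_0}(M):=\frac{1}{\dim M}\sum_{x^b\in M}b_0 .$$
One checks that $M=K$ recovers $\mu(\sV_d)=-d/(\binom{d+n}{n}-1)$. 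Therefore $\sV_d$ is semistable (resp. stable) if and only if $\overline{b_0}(M)\leq\overline{b_0}(K)$ (resp. $<$ for every proper $M$) for all $P$-submodules $M\subseteq K$; in words, no $P$-submodule may concentrate on higher $x_0$-degree than $K$ does.

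The heart of the matter is to analyze the $U$-action and classify these submodules. Writing a general $u\in U$ as $x_0\mapsto x_0+\sum_{i\geq1}c_ix_i$, $x_i\mapsto x_i$, the action on $x_0^a m$ with $m\in S^\bullet\langle x_1,\ldots,x_n\rangle$ is $\sum_j\binom{a}{j}x_0^{a-j}(\sum c_ix_i)^j m$, and in characteristic $p$ the surviving terms are dictated by Lucas' theorem: $\binom{a}{j}\not\equiv0\pmod p$ exactly when each $p$-adic digit of $j$ is at most the corresponding digit of $a$. This is the precise source of the extra submodules in positive characteristic noted in the introduction --- e.g. $\langle x_0^{p^i},\ldots,x_n^{p^i}\rangle$ is $P$-stable since $(x_0+\sum c_ix_i)^{p^i}=x_0^{p^i}+\sum c_i^{p^i}x_i^{p^i}$, reflecting the subbundle $F^{i\ast}\sV_1\subset\sV_{p^i}$ --- and it is the $p$-adic expansion $d=\sum a_jp^j$ that governs which high-$x_0$-degree monomials can be protected inside a $P$-submodule without being dragged down to low $x_0$-degree by $U$. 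I would then recast $\overline{b_0}(M)\leq\overline{b_0}(K)$ as a weighted inequality over $p$-adic digit patterns and verify it in each case: for $\bP^2$ (case 1) the two-variable combinatorics is directly manageable for all $d$; for a two-term expansion (case 2) the mixing produced by $U$ is limited enough that the extremal $M$ can be exhibited explicitly; and in case 3 the hypotheses on the digits $a_j$ relative to $p$ and $n$ are exactly what bound how far $U$ can push weight into high $x_0$-degree.

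The step I expect to be the main obstacle is this last one: pinning down the extremal $P$-submodule among the new characteristic-$p$ submodules and proving the digit inequality $\overline{b_0}(M)\leq\overline{b_0}(K)$ uniformly. When $d$ has many nonzero $p$-adic digits the submodule lattice becomes intricate and the competition between different Frobenius-built submodules is hard to control, which is precisely why the theorem is proved under the three structural restrictions rather than for arbitrary $(n,d,p)$.
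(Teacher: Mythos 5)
Your setup is sound and, for $n\geq 3$, coincides with the paper's framework: the slope dictionary is correct (it reproduces $\mu(S^{d-i}(\sV_1)\otimes\sO(i)) = i-(d-i)/n$ and $\mu(\sV_d)=-d/(\binom{d+n}{n}-1)$), the torus-weight/monomial decomposition of a $P$-submodule is part~(1) of Lemma~\ref{r1}, and your Lucas-type analysis of the unipotent action is exactly the binomial-coefficient computation in part~(2) of Lemma~\ref{r1}. The genuine gap is that the proposal stops where the proof begins: the inequality $\overline{b_0}(M)\leq\overline{b_0}(K)$ is never established in any of the three cases --- you say you ``would recast'' it as a digit inequality and ``verify it'', but that verification is the entire content of Sections~3 and~4 of the paper. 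Concretely, what is missing is (i) Borel's theorem that $S^{t_0}(V_1)\otimes F^{*}S^{t_1}(V_1)\otimes\cdots\otimes F^{*m}S^{t_m}(V_1)$ is the \emph{minimal} $SL(n)$-submodule of $S^{d-i}(V_1)$ (Lemma~\ref{l7}), which is what converts nonvanishing of a graded piece $W(i)$ into a dimension lower bound; (ii) Mehta--Ramanathan semistability of $S^a(\sV_1)$ and of its Frobenius pullbacks and tensor products (Remark~\ref{r5}), which converts dimension bounds into degree bounds; and (iii) the long case analysis built on the diagrams factoring $\sV_d$ through $H^0(\sO(a_0+\cdots+a_{m-1}p^{m-1}))\otimes F^{*m}\sV_{a_m}$, with the recursion $-\deg\sA_l\geq(-\deg\sA_{l+1})(h(a_l)-1)+\delta_l$ and the counting over the index sets $C_j(\sW)$ (Lemmas~\ref{l6} through~\ref{l5} and their nested claims). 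The hypotheses (a)--(c) of case~(3) enter \emph{only} through these estimates; asserting they ``are exactly what bound how far $U$ can push weight'' restates the goal rather than proving it, and you yourself flag this step as the expected obstacle.

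There is also a structural mismatch on case~(1). Your claim that for $\bP^2$ ``the two-variable combinatorics is directly manageable for all $d$'' is unsupported, and the paper's practice suggests otherwise: its combinatorial machinery is run only for $n\geq 3$ (several estimates, e.g.\ in Lemma~\ref{l5}, use $p> n\geq 3$), and the $n=2$ case is proved by an entirely different geometric route --- restrict to a smooth plane curve $X$ of degree $d$, identify $\sV_d|_X\cong K_{\sL}$ for $\sL=\sO_X(d)$, and prove stability of $K_{\sL}$ when $\deg\sL>2g$ via Clifford's theorem and a section count (Lemma~\ref{l1} and Proposition~\ref{p1}). So for $\bP^2$ your plan requires an argument the paper itself does not possess. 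A minor, fixable point: the saturation of the maximal destabilizing subsheaf is a priori only reflexive, not locally free; to get an honest homogeneous subbundle you should add that the saturation is $G$-invariant and that $G$ acts transitively on $\bP^n_k$, so its degeneracy locus, being $G$-stable and proper closed, is empty.
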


By  analysing further the proof of Theorem~\ref{c1}, 
we answer the second question of Langer affirmatively in the following 

\begin{proposition}\label{p2}Let $\sV_d$ be the vector bundle on 
$\bP^n_k$, given by 
the short exact sequence (\ref{tt}). Let $\sV_d^* = 
{\sHom}_{\sO_{\bP^n_k}}(\sV_d,\sO_{\bP^n_k})$ denote the dual of $\sV_d$. 
Then 
$$ \frac{d}{{\binom{d+n}{d}}-1}\leq \mu_{max}(\sV_d^*)\leq 
\frac{d}{\binom{ {\lceil{d/2}\rceil}+n-1}{{\lceil{d/2}\rceil}}}
,$$
where ${\lceil{x}\rceil} = $ the smallest integer $\geq x$.
\end{proposition}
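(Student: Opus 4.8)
The lower bound is immediate and should be disposed of first. From the defining sequence~(\ref{tt}) one reads off $c_1(\sV_d)=-d$ and $\rank\sV_d=\binom{d+n}{n}-1$, so that $\mu(\sV_d^*)=-\mu(\sV_d)=d/(\binom{d+n}{n}-1)$; since the first step of the Harder--Narasimhan filtration of any torsion-free sheaf has slope at least the slope of the whole sheaf, $\mu_{max}(\sV_d^*)\ge \mu(\sV_d^*)=d/(\binom{d+n}{n}-1)=d/(\binom{d+n}{d}-1)$, which is the asserted left-hand inequality.

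For the upper bound I would let $\sW\subseteq\sV_d^*$ be the maximal destabilizing subsheaf, so that $\sW$ is semistable, saturated, and $\mu(\sW)=\mu_{max}(\sV_d^*)$. Dualizing~(\ref{tt}) gives a surjection $q\colon H^0(\sO(d))^*\otimes\sO \by{} \sV_d^*$ with kernel $\sO(-d)$. Pulling $\sW$ back along $q$ produces $\tilde\sW:=q^{-1}(\sW)$, which sits in $0\to\sO(-d)\to\tilde\sW\to\sW\to 0$. Because $\tilde\sW$ is a subsheaf of a trivial bundle we have $\mu(\tilde\sW)\le\mu_{max}(\tilde\sW)\le 0$, and comparing ranks and degrees across this sequence yields the easy estimate $\deg\sW\le d$. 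Consequently the upper bound reduces to a single rank estimate: once one knows $\rank\sW\ge\binom{\lceil d/2\rceil+n-1}{\lceil d/2\rceil}$, the chain $\mu_{max}(\sV_d^*)=\deg\sW/\rank\sW\le d/\rank\sW\le d/\binom{\lceil d/2\rceil+n-1}{\lceil d/2\rceil}$ finishes the proof.

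The \emph{crux} is therefore showing that a subsheaf of $\sV_d^*$ of positive slope cannot have small rank. Writing $e=\lceil d/2\rceil$ and $f=\lfloor d/2\rfloor$, the structural input I would use is the balanced multiplication map $H^0(\sO(e))\otimes\sV_f\to\sV_d$, $(g,s)\mapsto g\cdot s$, which is surjective for $d\ge 2$ (fibrewise, every degree-$d$ form vanishing at a point is a sum of products of a degree-$e$ form with a degree-$f$ form vanishing at that point), so that dualizing embeds $\sV_d^*\hookrightarrow H^0(\sO(e))^*\otimes\sV_f^*$. I would combine this with restriction to lines: on any line $L$ the $\bP^1$-syzygy computation gives $\sV_d^*|_L\cong\sO_L(1)^{\oplus d}\oplus\sO_L^{\oplus(\binom{d+n}{n}-1-d)}$, so that $\sW|_L$ is a subsheaf of positive degree $\deg\sW$ of $\sO_L(1)^{\oplus d}\oplus\sO_L^{\oplus\cdots}$. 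Letting $L$ range over the $\bP^{n-1}$ of lines through a general point and tracking how the positive part of $\sW|_L$ must sweep out independent directions, with the degree-$e$ forms (hence the space $H^0(\bP^{n-1},\sO(e))$ of dimension $\binom{e+n-1}{\,e\,}$) indexing these directions, is intended to force $\rank\sW\ge\binom{e+n-1}{\,e\,}$.

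The main obstacle is precisely this rank estimate. It must hold in \emph{every} characteristic, so one cannot invoke the Grauert--Mülich theorem or any restriction theorem whose proof degenerates in small characteristic; the argument has to come directly from the fibrewise and global structure of $\sV_d^*$ together with the multiplication map. The delicate point is controlling the interaction of the splitting type along the family of lines with a saturated $\sW$ that need not be locally free, and I expect the bulk of the work (and the point at which the exponent $e=\lceil d/2\rceil$, the balanced choice making the multiplication surjective while keeping $\dim H^0(\sO(e))$ as small as possible, is pinned down) to lie here. The degenerate case $d=1$, where $\sV_1=\Omega^1_{\bP^n}(1)$ is stable, is checked separately and is consistent with the stated bounds.
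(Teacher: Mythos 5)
Your lower bound and your estimate $\deg\sW\leq d$ are both correct --- indeed the trivial-bundle argument via $0\to\sO(-d)\to\tilde\sW\to\sW\to 0$ is cleaner than the paper's route, which gets $\deg\sU_1<d$ from its homogeneous machinery (Lemma~\ref{l6} and Remark~\ref{r7}). But there is a genuine gap exactly where you locate the ``crux'': the rank estimate $\rank\sW\geq\binom{\lceil d/2\rceil+n-1}{\lceil d/2\rceil}$ is never proved, and the sweeping-over-lines heuristic cannot deliver it as stated. On any single line $L$ a positive-degree subsheaf of $\sV_d^*|_L\cong\sO_L(1)^{\oplus d}\oplus\sO_L^{\oplus N}$ may perfectly well have rank $1$ (e.g.\ a copy of $\sO_L(1)$), and you give no mechanism converting variation over the $\bP^{n-1}$ of lines through a point into the binomial coefficient; ``$H^0(\bP^{n-1},\sO(e))$ indexing directions'' is not an argument. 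Your multiplication map $H^0(\sO(e))\tensor\sV_f\to\sV_d$ is indeed surjective (fibrewise $(\mm_x)_d=S_e\cdot(\mm_x)_f$ for $f\geq 1$), but the resulting inclusion $\sV_d^*\hookrightarrow H^0(\sO(e))^*\tensor\sV_f^*$, iterated, only yields $\mu_{max}(\sV_d^*)\leq\mu_{max}(\sV_1^*)=1/n$, which for $n\geq 3$ and large $d$ is strictly weaker than the asserted bound; so this structural input cannot by itself pin down the exponent either.

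The missing idea --- the one the paper's proof runs on --- is homogeneity. By uniqueness of the Harder--Narasimhan filtration, the maximal destabilizing subsheaf $\sU_1\subseteq\sV_d^*$ (your $\sW$) is a homogeneous $G$-subbundle, and the kernel $\sW=\sE^*\subseteq\sV_d$ of the dual surjection $\sV_d\to\sU_1^*$ corresponds to a $P$-submodule $W\subset V_d$, which by Lemma~\ref{r1} splits as $\bigoplus_i W(i)$. Since $\mu(\sW)>\mu(\sV_d)$, Lemma~\ref{l2} forces $W(a_0+\cdots+a_{m-1}p^{m-1})=0$, and Remark~\ref{r2} propagates this to kill every block $W_{a_0,\ldots,a_{m-1},i_m}$ with $0\leq i_m\leq a_m-1$. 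The corresponding blocks of $V_d$ have total dimension $\sum_{j=1}^{a_m}|S^{jp^m}(V_1)|\geq|S^{a_mp^m}(V_1)|$, and since $d<(a_m+1)p^m\leq 2a_mp^m$ one has $a_mp^m\geq\lceil d/2\rceil$, whence $\rank\sU_1=|V_d|-|W|\geq\binom{\lceil d/2\rceil+n-1}{\lceil d/2\rceil}$, which combined with $\deg\sU_1<d$ finishes the proof. Note in particular that the $\lceil d/2\rceil$ in the statement comes from the leading $p$-adic digit (the inequality $a_mp^m\geq d/2$), not from a balanced splitting $d=e+f$ in a multiplication map, so your guess about its provenance steers the construction in the wrong direction; any attempt to complete your purely geometric route would still have to find a characteristic-free substitute for this representation-theoretic vanishing.
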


As a consequnce one can remove the  
restriction on the characteristic of the field in Theorem~\ref{t1} of 
Langer and obtain the following 

\begin{cor}\label{c4} {\it Let $(X, H)$ be a smooth 
$n$-dimensional $(n\geq 2)$ polarized 
variety with globally generated tangent bundle $\sT_X$. Let $E$ be an 
$H$-semistable torsion free sheaf of rank $r\geq 2$ on $X$. Let 
 $d$ be an integer such that 
$$d > \frac{r-1}{r}\Delta(E)H^{n-2}+\frac{1}{r(r-1)H^n}$$
and, 
\begin{enumerate}
\item for  $n=2$,  
$$\frac{{\binom{d+n}{d}}-1}{d} > H^n{\mbox{max}}\{\frac{r^2-1}{4}, 
1\}+1,$$
\item and, for $n\geq 3$,  
$$\frac{\binom{ {\lceil{d/2}\rceil}+n-1}{{\lceil{d/2}\rceil}}}{d} > 
H^n{\mbox{max}}\{\frac{r^2-1}{4}, 
1\}+1.$$
\end{enumerate}
Then the restriction $E_D$ is strongly 
$H$-semistable 
for a very general $D\in |dH|$.}\end{cor}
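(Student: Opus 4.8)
The plan is to retrace Langer's proof of Theorem~\ref{t1} and isolate the single place where the hypothesis $\mathrm{char}\,k > d$ is invoked, namely the appeal (via Flenner's method) to the semistability of $\sV_d$ on $\bP^n_k$. Langer's argument, after reducing the restriction problem on $(X,H)$ to an estimate on $\bP^n$ using the global generation of $\sT_X$, is ultimately controlled by the maximal slope of $\sV_d^*$: when $\sV_d$ is semistable one has $\mu_{max}(\sV_d^*) = \mu(\sV_d^*) = d/(\binom{d+n}{d}-1)$, and the second numerical hypothesis of Theorem~\ref{t1} is precisely the inequality $1/\mu_{max}(\sV_d^*) > H^n\max\{(r^2-1)/4,\,1\}+1$ rewritten under this equality. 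The key observation is that genuine semistability of $\sV_d$ is never needed as such; the chain of estimates consumes only a lower bound for $1/\mu_{max}(\sV_d^*)$.

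First I would treat the case $n=2$. Here Theorem~\ref{c1}(1) supplies semistability of $\sV_d$ on $\bP^2_k$ for every $d\geq 1$ and every $p$, so $\mu_{max}(\sV_d^*) = d/(\binom{d+2}{d}-1)$ exactly as in characteristic zero. Langer's inequality is then available verbatim, his proof goes through word for word with no restriction on $p$, and this yields condition~(1).

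Next, for $n\geq 3$, where semistability of $\sV_d$ may genuinely fail, I would instead feed the upper bound of Proposition~\ref{p2},
$$\mu_{max}(\sV_d^*)\leq \frac{d}{\binom{\lceil d/2\rceil+n-1}{\lceil d/2\rceil}},$$
into the same estimate. This gives $1/\mu_{max}(\sV_d^*)\geq \binom{\lceil d/2\rceil+n-1}{\lceil d/2\rceil}/d$, so the hypothesis of condition~(2) forces $1/\mu_{max}(\sV_d^*) > H^n\max\{(r^2-1)/4,\,1\}+1$, which is exactly the inequality Langer's proof requires. Replacing his invocation of Flenner's theorem by this bound removes the assumption $\mathrm{char}\,k > d$ and completes the deduction in both cases.

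The main obstacle is a careful reading of Langer's proof to confirm that the characteristic hypothesis enters \emph{only} through the semistability of $\sV_d$, and that at every step it is the quantity $1/\mu_{max}(\sV_d^*)$—equivalently, a lower bound for it—rather than actual semistability that is used. Once this is verified, substituting the inputs from Theorem~\ref{c1}(1) (for $n=2$) and Proposition~\ref{p2} (for $n\geq 3$) is routine, since the modified hypotheses~(1) and~(2) were chosen precisely to reproduce Langer's governing inequality.
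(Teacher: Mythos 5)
Your proposal is correct and takes essentially the same route as the paper: the paper likewise reruns Langer's proof of Theorem~\ref{t1}, retaining his inequality $\frac{d}{\max\{\frac{r^2-1}{4},1\}}\leq \mu_i(F^{m*}E|_D)-\mu_{i+1}(F^{m*}E|_D)\leq H^n\cdot\mu_{max}(\sV_d^*|_D)$ and bounding the right-hand side via Proposition~\ref{p2} for $n\geq 3$ and via the stability of $\sV_d$ on $\bP^2_k$ (Proposition~\ref{p1}) for $n=2$. The only detail worth noting is that restriction to $D\in|dH|$ costs an extra factor of $d$ in $\mu_{max}$ (whence the $d^2$ in the paper's displayed bounds), which your substitution handles exactly as the paper does.
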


It follows from the above corollary that, for a given char~$k = p > 0$, 
one can  find $d_0$ such that, for all $d \geq d_0$, the restriction $E_D$ 
is strongly $H$-semistable for a very general $D\in |dH|$.

We recall that (1) when $X$ is a smooth projective variety with  a 
polarization $H$ and $E$ a
strongly $H$-semistable bundle  on $X$ of rank $< \dim~X$ then 
Maruyama [Ma] 
proved 
that $E|_D$ is strongly $H$-semistable for a very general $D\in |H|$, 
{\it 
i.e.}, $d = 1$ in this case, (2) when $E$ is a homogeneous bundle on 
${\mathbf P}^n_k$ induced by an irreducible representation of $P$, where 
$P$ is a maximal parabolic subgroup of $GL(n+1)$ such that $GL(n+1)/P = 
{\mathbf P}^n_k$, then  $E|_D$ is  strongly semistable,  (i) if $D$ is  
any smooth quadric and $\mbox{char}~k \neq 2$ and (ii) if $D$ is any 
smooth 
cubic and $\mbox{char}~k \neq 3$. 
In particular $E|_D$ is strongly semistable for very general hypersurface 
of degree $\geq 2$, if $\mbox{char}~k > 5$.

\begin{remark}\quad One can prove the semistability of $\sV_d$ in the 
following 
case also, but the proof gets very technical (see arXiv:mathRT/0804.0547): 
Let $\sV_d$ be  the 
vector bundle as given by the short exact sequence~(\ref{tt}). 
Suppose  $d = (a_{0}+ a_{1}p + \cdots + a_{m}p^{m})p^{i_0}$
is the $p$-adic expansion of
the integer $d$, where  $a_0$ and $a_m$ are nonzero integers,
such that 
\begin{enumerate}
\item $n\geq m+1$,
\item $h^0(\bP^n_k, \sO_{\bP^n_k}(a_m))\geq 1+a_mmn$, and 
\item $a_0\leq a_1\leq \cdots <a_{m-1}$ and $a_{m-2} \leq a_m$.
\end{enumerate}
Then $\sV_d$ is a semistable (in fact stable) vector bundle over 
$\bP^n_k$.
 
If $a_m \geq 4$ then the condition~(2) in the above statement is always 
satisfied. Moreover if $m\leq n-2$ then the condition~(2) is satisfied for 
any $a_m \geq 3$ \end{remark}

Author would like to thank V.B. Mehta for suggesting the 
semistability question of  $\sV_d$.

\section{Syzygy bundles on $\bP^2_k$}
First we prove the following result for $\bP^2_k$, by different methods 
than
we use
for
higher dimensional projective space. 

\begin{proposition}\label{p1}For $n =2$ and for $d\geq 1$, the
bundle $\sV_{d}$  is  stable, 
on ${\mathbf P}^2_k$.
\end{proposition}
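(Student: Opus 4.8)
The plan is to establish stability by ruling out destabilizing subsheaves through a cohomological (Hoppe-type) criterion. First I would record the numerical invariants: from the sequence (\ref{tt}) the bundle $\sV_d$ has rank $N=\binom{d+2}{2}-1=\tfrac{d(d+3)}{2}$, first Chern class $-d$, and hence slope $\mu(\sV_d)=-\tfrac{2}{d+3}\in(-1,0)$. To prove stability it suffices to show that every saturated subsheaf $\sF\subset\sV_d$ of rank $q$, $1\le q\le N-1$, satisfies $\deg\sF<q\,\mu(\sV_d)$. Since $\mathrm{Pic}(\bP^2_k)=\Z$, we have $\det\sF=\sO(\delta)$ with $\delta=\deg\sF$, and the inclusion induces a nonzero section of $(\wedge^q\sV_d)(-\delta)$ (the map $\wedge^q\sF\to\wedge^q\sV_d$, extended over the finite locus where $\sF$ fails to be a subbundle). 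A destabilizing $\sF$ has $\delta\ge q\mu(\sV_d)$, so it produces a nonzero element of $H^0(\bP^2_k,(\wedge^q\sV_d)(m))$ for some integer $m=-\delta\le T_q:=\lfloor 2q/(d+3)\rfloor$. Thus the proposition reduces to the vanishing $H^0(\bP^2_k,(\wedge^q\sV_d)(m))=0$ for all $1\le q\le N-1$ and all $m\le T_q$; note $T_q=0$ exactly when $q<(d+3)/2$.

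The heart of the argument, and the part I expect to go through cleanly, is the range $m\le 0$. The sequence (\ref{tt}) realizes $\sV_d$ as a subbundle of the trivial bundle $W\tensor\sO$ with $W=H^0(\bP^2_k,\sO(d))$, whose fiber at $x$ is the hyperplane $H_x=\ker(\eta_x)\subset W$, where $\eta_x\in W^*$ is evaluation at $x$. Hence $\wedge^q\sV_d\subset\wedge^qW\tensor\sO$ with fiber $\wedge^qH_x$. For $m<0$ there are no sections at all; for $m=0$ a section is a fixed $\omega\in\wedge^qW$ lying in $\wedge^qH_x$ for every $x$, i.e. $\iota_{\eta_x}\omega=0$ for all $x$, where $\iota$ is contraction. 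Because $\sO(d)$ is base-point-free the functionals $\{\eta_x\}$ (the points of the $d$-uple Veronese) span $W^*$, so $\iota_\phi\omega=0$ for all $\phi\in W^*$, forcing $\omega=0$. This kills every subsheaf of nonnegative degree and in particular settles all $q<(d+3)/2$ at once.

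The remaining, and genuinely harder, range is $1\le m\le T_q$, which only occurs for $q\ge(d+3)/2$. Here I would use the Koszul filtration coming from (\ref{tt}), namely the exact sequence $0\to\wedge^q\sV_d\to\wedge^qW\tensor\sO\to(\wedge^{q-1}\sV_d)(d)\to 0$. Twisting by $\sO(m)$ and using the surface-specific vanishing $H^1(\bP^2_k,\sO(m))=0$, the long exact sequence identifies $H^0((\wedge^q\sV_d)(m))=\ker\phi$ and $H^1((\wedge^q\sV_d)(m))=\coker\phi$ for the natural contraction-multiplication map $\phi:\wedge^qW\tensor H^0(\sO(m))\to H^0((\wedge^{q-1}\sV_d)(m+d))$. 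I would then prove the required vanishing by induction on $q$, the base case $q=1$ being the cohomology of $\sV_d$ read off directly from (\ref{tt}). One may also use $\sV_d$ stable $\iff\sV_d^*$ stable to reduce the range of $q$ that must be treated to $q\le N/2$.

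The main obstacle is precisely the injectivity of $\phi$ in the positive-twist range: one must show that no nonzero $\wedge^qW$-valued form of degree $m$ is fiberwise decomposable into $\wedge^qH_x$, equivalently that the above cohomology groups stay zero as $q$ grows. On the surface this is tractable because only $H^0$ and $H^1$ intervene before the top degree and $H^1(\sO(m))$ vanishes identically; in higher dimension the intermediate cohomology of the bundles $\wedge^q\sV_d$ obstructs exactly this step, which is the reason the cases $n\ge 3$ in Theorem \ref{c1} require the additional arithmetic hypotheses on the $p$-adic digits of $d$.
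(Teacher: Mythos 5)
Your reduction to a Hoppe-type criterion is set up correctly (the numerics $\mu(\sV_d)=-2/(d+3)$ are right, and on $\bP^2_k$ the passage from a saturated destabilizing subsheaf of rank $q$ to a nonzero section of $(\wedge^q\sV_d)(m)$ with $m\le\lfloor 2q/(d+3)\rfloor$ is sound), and your argument for the range $m\le 0$ is complete and correct: the evaluation functionals $\eta_x$ span $W^*$, so no nonzero $\omega\in\wedge^qW$ lies in $\wedge^q H_x$ for all $x$. But the proof has a genuine gap at exactly the point you flag yourself. For $q\ge (d+3)/2$ you must prove $H^0(\bP^2_k,(\wedge^q\sV_d)(m))=0$ for $1\le m\le \lfloor 2q/(d+3)\rfloor$, and what you offer is the exact sequence $0\to\wedge^q\sV_d\to\wedge^qW\tensor\sO\to(\wedge^{q-1}\sV_d)(d)\to 0$ plus the phrase ``induction on $q$.'' That induction does not close: one step replaces the twist $m$ by $m+d$, which lies far outside the range $m'\le\lfloor 2(q-1)/(d+3)\rfloor$ controlled by any inductive hypothesis, so $H^0((\wedge^{q-1}\sV_d)(m+d))$ is in general a large nonzero space and its vanishing is neither true nor what you need. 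What you actually need is injectivity of the contraction--multiplication map $\phi:\wedge^qW\tensor H^0(\sO(m))\to H^0((\wedge^{q-1}\sV_d)(m+d))$, i.e.\ the vanishing of a Koszul-cohomology group of $(\bP^2_k,\sO(d))$. That is essentially a restatement of the stability problem, not a reduction of it, and since the whole point of the paper is that such statements are delicate in characteristic $p$ (where the symmetric-power filtration argument of Flenner breaks down for $d\ge p$), asserting it is ``tractable'' is not an argument. As written, you have proved only that $\sV_d$ admits no destabilizing subsheaf of rank $q<(d+3)/2$, equivalently none of nonnegative degree.

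For comparison, the paper's proof avoids cohomology of exterior powers entirely and is characteristic-free. It restricts to a smooth plane curve $X$ of degree $d$: the sequence $0\to\sO_{\bP^2_k}(-1)\to\sO_{\bP^2_k}(d)\to\sO_X(d)\to 0$ gives $H^0(\bP^2_k,\sO(d))\cong H^0(X,\sO_X(d))$, whence $\sV_d|_X\cong K_\sL$ for $\sL=\sO_X(d)$, with $\deg\sL=d^2>2g(X)=(d-1)(d-2)$. Stability of $K_\sL$ (Lemma~\ref{l1}) is then proved by a Paranjape--Ramanan-type argument on the curve: a rank-$r$ quotient $\sF$ of $K_\sL^\vee$ is globally generated with $h^0\ge r+1$, and either Clifford's theorem (if $H^1(X,\sM^\vee)\ne 0$) or Riemann--Roch (if $H^1(X,\sM^\vee)=0$) applied to the line bundle $\sM^\vee=\det\sF$ forces $\mbox{slope}~\sF>\mbox{slope}~K_\sL^\vee$; stability of the restriction then forces stability of $\sV_d$. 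If you want to complete your route instead, you would have to establish the Koszul vanishing (injectivity of $\phi$) in all characteristics for all $q\ge(d+3)/2$ and $1\le m\le\lfloor 2q/(d+3)\rfloor$, which on $\bP^2_k$ is harder than the proposition itself.
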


The proof relies on the following lemma, which we prove using an argument
similar to the proof the following proposition in [KR].

\vspace{5pt}

\noindent{\bf{Proposition}}~{\mbox [KR]}\quad {\it Let $X$ be a
nonsingular
curve
of genus
$g\geq 2$. Then for the pair $(X,\omega_X)$, where $\omega_X$ is the
 canonical line bundle of $X$, the sheaf $K_{\omega_X}$ is
semistable.}

\begin{lemma}\label{l1}Let $X$ be a nonsingular
curve of genus $g\geq
2$ and $\sL$ be a (base point free) line bundle on $X$ such that
$\mbox{deg}~\sL > 2g$. Let $K_{\sL}$ be the syzygy bundle for the
evaluation map
$ H^0(X, \sL)\tensor \sO_X \by{} \sL $.
Then $K_{\sL}$ is stable.\end{lemma}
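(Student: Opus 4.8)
The plan is to establish stability by bounding from above the degree of an arbitrary proper subbundle $F\subseteq K_{\sL}$, imitating and sharpening the semistability argument of [KR] for the canonical case. First I would record the numerical data: since $\deg\sL>2g-2$ the line bundle $\sL$ is nonspecial, so $h^0(X,\sL)=d-g+1$ where $d=\deg\sL$; hence $\rank K_{\sL}=d-g=:N$, $\deg K_{\sL}=-d$, and $\mu(K_{\sL})=-d/(d-g)$. Writing $V=H^0(X,\sL)$ and $0\by{} K_{\sL}\by{} V\tensor\sO_X\by{}\sL\by{} 0$, stability is equivalent to the assertion that every subbundle $F\subseteq K_{\sL}$ with $0<\rank F=r<N$ satisfies $\deg F<-rd/(d-g)$. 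It is convenient to pass to the quotient $R:=(V\tensor\sO_X)/F$, which fits into $0\by{} K_{\sL}/F\by{} R\by{}\sL\by{} 0$ and has $\deg R=-\deg F$, $\rank R=N+1-r$; the target inequality becomes $\deg R>rd/(d-g)$.

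Next I would exploit that $R$, being a quotient of the trivial bundle $V\tensor\sO_X$, is globally generated, and that $h^0(R)\geq N+1$: the map $V=H^0(V\tensor\sO_X)\to H^0(R)$ is injective because $H^0(K_{\sL})=0$ (the composite $V\to H^0(\sL)$ is the identity). To convert the rank-$(N+1-r)$ bundle $R$ into a line bundle I would choose $N-r$ general sections; these span a trivial subbundle $\sO_X^{\,N-r}\hookrightarrow R$ (saturated after a harmless adjustment), with quotient a globally generated line bundle $A=\det R$ satisfying $\deg A=\deg R$ and $h^0(A)\geq h^0(R)-(N-r)\geq r+1$.

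The heart of the argument is then a dichotomy on $A$, exactly as in the canonical case. If $A$ is nonspecial, Riemann--Roch gives $\deg A=h^0(A)+g-1\geq r+g$; if $A$ is special, Clifford's theorem gives $\deg A\geq 2(h^0(A)-1)\geq 2r$. In the nonspecial branch $r+g>rd/(d-g)$ because $r<N=d-g$; in the special branch $2r>rd/(d-g)$ is equivalent to $d-g>g$, that is, to the hypothesis $\deg\sL>2g$. In either case $\deg R=\deg A>rd/(d-g)$, which is the required bound, so $K_{\sL}$ is stable.

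I expect the main obstacle to be the reduction in the second step: justifying that $N-r$ general sections of the globally generated bundle $R$ cut out an honest (saturated) trivial subbundle whose quotient is a line bundle, so that the estimate $h^0(A)\geq r+1$ is legitimate. The remaining delicate point is pure book-keeping: verifying that the strict hypothesis $\deg\sL>2g$ (as opposed to the borderline value $2g-2$ relevant to $\omega_X$, where the special branch only yields $\deg R\geq 2r$ with equality allowed, hence mere semistability in [KR]) is precisely what forces the strict inequality in the special branch, thereby upgrading semistability to stability.
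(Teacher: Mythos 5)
Your argument is correct and coincides in essence with the paper's: both reduce to the same line bundle of degree $-\deg F$ with at least $r+1$ sections (your $A=\det R\cong(\det F)^{-1}$ is exactly the paper's $\sM^{\vee}\cong\wedge^{r}\sF^{\vee}$, where $\sF$ is the rank-$r$ quotient of $K_{\sL}^{\vee}$ dual to your subbundle $F$), and then apply the identical Clifford versus Riemann--Roch dichotomy, with $\deg\sL>2g$ giving strictness in the special (Clifford) branch and $r<\rank K_{\sL}$ in the nonspecial one. The only cosmetic difference is the device producing that line bundle --- the paper, following [KR], evaluates an $(r+1)$-dimensional generating subspace $W\subseteq H^{0}(\sF)$ with line-bundle kernel $\sM$, whereas you peel off $N-r$ general sections of the globally generated quotient $R=(V\tensor\sO_X)/F$; both are standard, and your general-position step is valid in any characteristic.
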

\begin{proof}Consider the short exact sequence
$$0\by{} K_{\sL} \by{} H^0(X, \sL)\tensor\sO_X \by{} \sL \by{} 0.$$
Now $h^0(X, \sL) = \deg~\sL+1-g$, since $H^1(X,\sL) = 0$, by
Serre duality. Therefore
$$\rank~K_{\sL} = \deg~\sL -g ~\mbox{and}~~\det
K_{\sL}^{\vee} =\sL,$$
 so that
 $\deg K_{\sL}^{\vee} = \deg~\sL$. This implies
$$\mbox{slope}~
(K_{\sL}^{\vee})
= \deg~\sL/(\deg~\sL-g) < 2.$$
Let $\sF$ be a quotient bundle of
$K_{\sL}^{\vee}$; then $\sF$ is generated by its global sections, and
$h^0(X,\sF) \geq r+1$ if rank~$\sF = r$ (otherwise it would
contradict the fact that $h^0(X, K_{\sL}) = 0$, because if $\sF$ is
trivial,
then so is $\sF^{\vee}$, and this would imply that
$h^0(X,K_{\sL})\geq r$). We choose (see [KR])  $W\subseteq
H^0(X,K_{\sL}^{\vee})$ such that
$\dim~W =
r+1$ and $W$ generates $\sF$; let
$$0\by{} \sM\by{} W\tensor\sO_X \by{} \sF \by{} 0 $$
be the corresponding short exact sequence. Then $\sM$ is a line bundle,
isomorphic to $\wedge^r\sF^{\vee}$, so that $\deg~\sF = \deg~\sM^{\vee}$.
 Note that $H^0(X, \sF^{\vee}) = 0$ as $H^0(X, K_{\sL})=0$ and therefore
$\dim~H^0(X, \sM^{\vee}) \geq r+1$.
\begin{enumerate}
\item Suppose $H^1(X, \sM^{\vee}) \neq 0$; then by Clifford's theorem
(chap~IV, Thoerem~5.4~[H])
$$\dim~H^0(X, \sM^{\vee}) -1 \leq (1/2)~(\deg~M^{\vee}).$$
 Hence $r\leq
(\deg~\sF)/2$.
This implies that $\mbox{slope}~\sF \geq 2 > 
\mbox{slope}~K_{\sL}^{\vee}$.
\item Suppose $H^1(X, \sM^{\vee}) = 0$. Then
$$H^0(X, \sM^{\vee}) =
\deg~\sM^{\vee} + 1 -g.$$ Hence $r+1 \leq \deg~\sF + 1-g$. This implies
$$\mbox{slope}~\sF\geq 1+g/r > 1 + g/(\deg~\sL- g) = \deg~\sL/(\deg~\sL- 
g) = 
\mbox{slope}~K_{\sL}^{\vee}.$$
\end{enumerate}
This proves the lemma.\end{proof}

\noindent{\it Proof of Proposition~\ref{p1}}:\quad Choose a nonsingular
curve $X$ of degree $d$ in $\bP^2_k$. 
We can deduce that  $$H^0(\bP^2_k, \sO_{\bP^2_k}(d)) \simeq H^0(X,
\sO_X(d))$$
from the following short exact sequence of sheaves of
$\sO_{\bP^2_k}$-modules
$$0\by{} \sO_{\bP^2_k}(-1)\by{} \sO_{\bP^2_k}(d) \by{} \sO_X(d)\by{} 0.$$

Let $\sL = \sO_{\bP^2_k}(d)\mid_X = \sO_X(d)$. 
Then we have a commutative diagram of exact sequences
$$\begin{array}{ccccccccc}
0 &\by{} & \sV_d\mid_X & \by{} & H^0(\bP^2_k, \sO_{\bP^2_k}(d))\tensor
\sO_X
&
\by{} &
\sO_{\bP_k^2}(d)\mid_X & \by{} &  0\\
  &      &  &    &        \downarrow{\cong} & &
\downarrow{\cong}  & &\\
0 & \by{} & K_{\sL} &  \by{} &
H^0(X, \sL)\tensor\sO_X & \by{} & \sL & \by{} & 0.
\end{array}$$
This implies that $\sV_d\mid_X\cong K_{\sL}$. 
But $\deg{\sL} = d^2 >  2(\mbox{genus}~X)$. Therefore, by 
Lemma~\ref{l1}, the bundle $K_{\sL}$ is stable on
$X$. Hence the bundle $\sV_d$ is stable on ${\bP_k^2}$. This
proves the
proposition. $\Box$

\section{The higher dimensional case}
Henceforth we assume that $n\geq 3$. Let $G = GL_{n+1}(k)$ and
let $P$ be the maximal parabolic group of $G$ given by
$$P= \left\{\left[\begin{array}{cc} 
g_{11} & *\\
0 & A \end{array}\right]
\in GL(n+1), ~~\mbox{where}~~A\in GL(n)\right\}.$$
Then there exists a canonical isomorphism 
$G/P\simeq \bP^n_k$. Recall that there is an equivalence of categories
between homogeneous $G$-bundles on $G/P$ and (finite 
dimensional)
$P$-modules.
The short exact sequence 

$$0 \by{} {\sV_d} \by{} H^0(\mathbf{P}_k^n, \sO_{{\mathbf
P}^n_k}(d))\tensor
\sO_{{\mathbf P}^n_k} \by{} \sO_{{\mathbf P}^n_k}(d) \by{} 0,$$
is naturally a sequence of homogeneous $G$-bundles, which 
corresponds to the short exact sequence of $P$-modules,

$$0\by{} V_d \by{} U_d\by{} W_d\by{} 0, $$
given as follows. Let $U_1$ and $V_1$ be $k$-vector spaces given by 
the basis $\{x_1, \ldots, x_n, z\}$ and $\{x_1, \ldots, x_n\}$
respectively.
Then $U_1$ is a $P$-module
such that if 
$$ g = \left[ \begin{array}{cc}
g_{11} & *\\
0 & A\end{array} \right], ~~~\mbox{where}~~A\in GL(n),$$
is an element of $P$ then 
 the representation $\rho: P \by{} {\rm{GL}}(U_1)$ is  defined as
follows (by matrix multiplication)

$$\rho(g)(z, x_1, \ldots, x_n) = 
\left[z, x_1,
\ldots, x_{n}\right]\cdot g^{-1}.$$

This gives canonical action of $P$ on $U_d = S^d(U_1)$ and on 
$$V_d = (S^1(V_1)\tensor z^{d-1}) \oplus (S^2(V_1)\tensor
z^{d-2}) \oplus  
\cdots \oplus S^d(V_1),$$ 
where elements of $U_d$ are homogeneous polynomials of degree $d$ in $x_1, 
\ldots, x_n, z$.

\begin{lemma}\label{r1} Let $\sW\subseteq \sV_d$ be a homogeneous
$G$-bundle and let $W\subset V_d$ be the corresponding $P$-module.
\begin{enumerate}
\item  Suppose
$f_0 + f_1z + \cdots + f_mz^m \in W$, where $f_i \in S^{d-i}(V_1)$. Then  
$f_iz^i \in W$, for all $i\geq 0$.
In other words, as $k$-vector spaces
 $$ W = W\cap (S^1(V_1)\tensor z^{d-1}) \oplus W\cap
(S^2(V_1)\tensor    
z^{d-2}) \oplus \cdots \oplus W\cap S^d(V_1).$$

 Moreover, 
\item if $i = i_0+i_1p+\cdots + i_mp^m $ denotes the $p$-adic expansion
of a positive integer $i$ and if $f\in S^{t_0}(V_1)$ is a homogeneous
polynomial such that 
$(f)z^i \in W$ (in particular $t_0+i = d$) then 
$$\begin{array}{lcl}
W & \supseteq & \phi\left[f\tensor \{S^{i_0}(V_1)\oplus
(S^{i_0-1}(V_1)\tensor 
z) \oplus
\cdots \oplus z^{i_0}\}\tensor \right.\\
& &  F^*\{S^{i_1}(V_1)\oplus (S^{i_1-1}(V_1)\tensor z) \oplus \cdots
\oplus z^{i_1}\}\tensor\cdots \\
& & \left.\tensor F^{m*}\{S^{i_m}(V_1)\oplus
(S^{i_m-1}(V_1)\tensor z)\oplus\cdots\oplus z^{i_m}\}\right],\end{array}$$
 where, for a positive integer $j$ such that  $j = j_0+j_p+\cdots+k_mp^m$, 
with the condition that  $0\leq j_k \leq i_k$, the map
 $$\phi: 
S^{t_0}(V_1)\tensor_{j=0}^{m}F^{j*}\left[\bigoplus_{k_j=0}^{i_j}
(S^{i_j-k_j}(V_1)\tensor z^{k_j})\right]
\rightarrow 
\bigoplus_{j=0}^{m}\bigoplus_{k_j=0}^{i_j}(S^{t_0+(i_j-k_j)p^j}(V_1)\tensor 
z^{i-(i_j-k_j)p^j}) $$ is 
the canonical map mapping to $V_d$, and 
$F^t$
denote
the $t^{th}$-iterated
Frobenius morphism and, for a vector-space $U$ generated  by 
$\{u_1, \ldots, u_t\}$, the vector-space $F^{i*}(U)$ is 
generated  by  $\{u_1^{p^i}, \ldots, u_t^{p^i}\}$.                
\end{enumerate}\end{lemma}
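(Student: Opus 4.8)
The plan is to exploit the fact that the $P$-submodule $W\subseteq V_d$ is stable under \emph{both} the Levi torus and the unipotent radical of $P$, and to handle the two parts of the lemma with these two subgroups respectively. Throughout I will use that $k$ is algebraically closed, hence infinite, in order to pass from one-parameter (or several-parameter) families of elements of $W$ to their individual coefficient vectors by a Vandermonde/interpolation argument: if a vector-valued polynomial in parameters $c$ takes values in the linear subspace $W$ for all $c$, then, $k$ being infinite, every coefficient vector also lies in $W$.

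For part (1), I would use the one-parameter torus $\{g_\lambda\}$ in the Levi of $P$ that scales $z$, so that $\rho(g_\lambda)$ fixes each $x_j$ and sends $z\mapsto \lambda z$. Applied to an element $\xi=\sum_i f_i z^i\in W$ with $f_i\in S^{d-i}(V_1)$, this gives $\rho(g_\lambda)\xi=\sum_i \lambda^{\,i} f_i z^i\in W$ for every $\lambda\in k^{*}$. Because the characters $\lambda\mapsto\lambda^{\,i}$ are pairwise distinct for distinct exponents $i$, choosing enough distinct values of $\lambda$ makes the coefficient matrix $(\lambda^{\,i})$ Vandermonde; inverting it expresses each homogeneous piece $f_i z^i$ as a $k$-linear combination of the vectors $\rho(g_\lambda)\xi\in W$, whence $f_i z^i\in W$. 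This is exactly the asserted decomposition of $W$ into its $z$-degree (weight) pieces.

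For part (2), the crux is the action of the unipotent radical of $P$; in the direction that preserves $V_d$ it fixes each $x_j$ and sends $z\mapsto z+\ell$, where $\ell=\sum_j c_j x_j$ is an arbitrary linear form in the $x$'s with parameters $c=(c_1,\dots,c_n)\in k^n$. Since $f\in S^{t_0}(V_1)$ involves only the $x_j$, applying this substitution to $f z^i\in W$ yields $f\cdot(z+\ell)^i\in W$ for \emph{every} $c$. I would then expand $(z+\ell)^i$ using the $p$-adic expansion $i=i_0+i_1p+\cdots+i_mp^m$ together with the Frobenius identity $(u+w)^{p^j}=u^{p^j}+w^{p^j}$ in characteristic $p$:
\[
(z+\ell)^i \;=\; \prod_{j=0}^{m}\bigl(z^{p^j}+\ell^{p^j}\bigr)^{i_j}
\;=\; \prod_{j=0}^{m}\;\sum_{k_j=0}^{i_j}\binom{i_j}{k_j}\,z^{k_j p^j}\,(\ell^{p^j})^{\,i_j-k_j},
\]
where $\ell^{p^j}=\sum_{l}c_l^{\,p^j}x_l^{\,p^j}$ lies in $F^{j*}(V_1)$. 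The decisive point is that each digit satisfies $0\le i_j<p$, so every binomial coefficient $\binom{i_j}{k_j}$ with $0\le k_j\le i_j$ has no factor of $p$ in its numerator and is therefore a unit in $k$; hence, after multiplying by $f$, the monomials that appear are precisely the pure tensors in the image of $\phi$, namely $f\cdot\prod_{j}F^{j*}(m_j)\,z^{\sum_j k_j p^j}$ with $m_j\in S^{i_j-k_j}(V_1)$. Finally, these monomials occur attached to (nonzero multiples of) distinct monomials in the parameters $c_l^{\,p^j}$, so varying $c$ over the infinite field $k$ and applying the multivariable Vandermonde/interpolation step lets me conclude that each of them separately lies in $W$. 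This gives $W\supseteq\mathrm{Im}(\phi)$, as claimed.

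The main obstacle is part (2): one must set up the unipotent action in the direction that genuinely preserves $V_d$ (converting powers of $z$ into the $x$'s rather than the reverse), perform the Frobenius factorization correctly so that the $p$-adic digits $i_j$ are what dictate which iterated-Frobenius twists $F^{j*}$ appear, and check that every relevant binomial coefficient survives modulo $p$ so that no term of $\phi$ is lost. The interpolation step is routine once $k$ is infinite, but some care is needed because the same monomial in $x$ and $z$ could a priori receive contributions from several multi-indices; organizing the expansion by the pairs $(j,k_j)$ and by the monomials in the parameters $c$ is what keeps the term-by-term extraction clean.
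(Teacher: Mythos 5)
Your proposal is correct and follows essentially the same route as the paper: part (1) is the weight-space decomposition under the diagonal torus $T\subseteq P$ (your Vandermonde argument is just this made explicit), and part (2) uses precisely the paper's substitution $(f)z^i \mapsto (f)(bz+a_1x_1+\cdots+a_nx_n)^i \in W$ followed by extraction of individual monomials, valid because the relevant binomial coefficients are prime to $p$ under the digit conditions $k_j\le i_j$. Your Frobenius factorization $(z+\ell)^i=\prod_{j}(z^{p^j}+\ell^{p^j})^{i_j}$ and one-shot multivariate interpolation merely streamline the paper's hand count of $p$-factors in $\binom{i}{k}$ (a Lucas-type computation) and its iterated, variable-by-variable binomial extraction.
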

\begin{proof} The first part follows from the fact that the diagonal Torus 
group $T
\subseteq GL(n+1)$ is contained in $P$.

To prove the second part of the lemma, by looking at possible monomials 
occuring on the right side of $\supseteq $, we see that it is enough 
to prove the following: 
Let $ k < i $ be a  nonnegative integer so that if we have $k = 
k_0+k_1p+\cdots + k_mp^m$ with 
$0\leq k_j \leq i_j$, for $0\leq  j \leq m$.
Let $x_1^{T_1}\cdots x_n^{T_n}\in S^{i-k}(V_1)$ be a monomial 
with 
$$T_j  = t_{0j}+t_{1j}p+ \cdots +t_{mj}p^j, ~~~\mbox{where},~~~ 0\leq
t_{ij}\leq p-1  $$
such that  
$$(t_{01}+\cdots + t_{0n})+(t_{11}+\cdots+t_{1n})p+\cdots + 
(t_{m1}+\cdots +t_{mn})p^m = i-k,$$
where $ t_{j1} + \cdots + t_{jn} = i_j-k_j$. 
Then $(f)(x_1^{T_1}\cdots x_n^{T_n})z^{k} \in W$.

As $W$ is a $P$-module, $(f)z^i \in W$ implies that  
 $(f)(bz+a_1x_1+\cdots + a_nx_n)^i \in  W$,
for every $(b, a_1, \ldots, a_n)\in (k\setminus \{0\})\times k^n$.
Let $y = a_1x_1+\cdots + a_nx_n$. Now
  $(f)(bz+y)^i\in W$ implies that 
$$(f)\left[\binom{i}{1}(bz)^{i-1}y + \cdots + \binom{i}{i-1}(bz)y^{i-1} +
y^i\right] \in
W.$$
 
Hence, as argued in part~(1) of the lemma, we have
$(f)\binom{i}{k}y^{i-k}z^k\in W$, for
every $0 \leq k\leq i$.
Now $$ \binom{i}{k} = \frac{(i-k+k)\cdots (i-k+1)}{k(k-1)\cdots 1},$$
where the terms, divisible by $p$, in the numerator are 
$$\{ (i-k-(i_0-k_0)+lp)\mid 1\leq l \leq k_1+\cdots +k_mp^{m-1}\}$$ 
and in the denominator are
$$\{ lp \mid 1\leq l \leq k_1+\cdots +k_mp^{m-1}\}.$$

Hence if $k_j \leq i_j$, for all $0\leq j \leq m$,  then
${\rm{g.c.d.}}(\binom{i}{k}, p) = 1$ which
implies
$(f)y^{i-k}z^k \in W$.
That is, $(f)(a_1x_1+\cdots + a_nx_n)^{i-k}z^k \in W$,  for every
$(a_1,\ldots, a_n)\in k^n$. 

Now let $y_1 = a_2x_2+\cdots +a_nx_n$, so that  
$(f)(y_1+a_1x_1)^{i-k}z^k \in W$. Since  $T_1\leq i-k$ 
and $t_{j1} \leq i_j-k_j$, where 
$$t_{01}+t_{11}p+\cdots+t_{m1}p^m = T_1
~~\mbox{and}~~ (i_0-k_0)+(i_1-k_1)p+\cdots (i_m-k_m)p^m = i-k,$$ such
that $0\leq t_{ij}, i_j-k_j \leq p-1$.
 Hence, by a similar argument with a binomial expansion,  we have
$(f)y_1^{i-k-T_1}x_1^{T_1}z^k \in
W$. Iterating the arguement we deduce that $(f)x_1^{T_1}\cdots
x_n^{T_n}\in
W$. This completes the proof of the claim (and hence of the lemma).
\end{proof}
\vspace{5pt}

Now throughout this paper, we fix a positive integer $d$ with its $p$-adic
expansion  
$ d = a_0 + a_1p + \cdots + a_mp^m$ and 
we also fix a nonzero homogeneous $G$-subbundle
$\sW\subseteq \sV_d$ given by the corresponding $P$-module 
$W\subset V_d$. We would denote 
$$ W(i) = W\cap (S^{d-i}(V_1)\tensor z^i).$$  
By Lemma~\ref{r1},
$ W = \bigoplus_{i=0}^{d-1}W(i),$ 
such that $\sW$ has a filtration by $G$-subbundles
$\sF_0\subseteq \sF_1\subseteq \cdots \sF_{d-1} = \sW,$
where 
$\sF_i$ is the vector bundle associated to the $P$-submodule 
$W(0)\oplus W(1) \oplus \cdots \oplus W(i) \subseteq W$.
In particular, the vector-subspace $W(i)$ has the {\em canonical 
$P$-subquotient} module structure with associated homogeneous $G$-bundle 
$\sW(i)$, so
that there is a $G$-equivariant isomorphism
$$\mbox{gr}(\sW) \cong \bigoplus_{i=0}^{d-1} \sW(i).$$

For a $k$-vector space $V$, the number $|V|$ denotes the dimension 
of $V$.

\begin{remark}\label{r2} Let $i, j < d$ 
be two positive integers such 
that  $i = i_0+i_1p+\cdots
+i_mp^m$ and $j = j_0+j_1p+\cdots + j_mp^m$ with the condition that   
$0\leq j_k 
\leq i_k \leq p-1$, for every $k \geq 0$. Then,  
by part~(2) of 
Lemma~\ref{r1}, 
$$W(i)\neq 0 \implies W(j)\neq 0.$$

Moreover, if $W(i) = B_i\tensor z^i$, where $B_i \subseteq S^{d-i}(V_1)$,
then 
$$W(j)\supseteq \phi\left[B_i\tensor S^{i_0-j_0}(V_1)\tensor
F^{*}(S^{i_1-j_1}(V_1))\tensor \cdots \tensor
F^{*m}(S^{i_m-j_m}(V_1))\right]\tensor z^j,$$
where $$\phi:S^{d-i}(V_1)\tensor S^{i_0-j_0}(V_1)\tensor
F^{*}(S^{i_1-j_1}(V_1)\tensor \cdots \tensor
F^{*m}(S^{i_m-j_m}(V_1))\rightarrow S^{d-j}(V_1)$$ is the canonical map.   
\end{remark}

\begin{lemma}\label{l7}If $W(i) \neq 0$ and if $d-i = (t_0+ t_1p+\cdots +
t_mp^m)$, where 
$0\leq t_i \leq p-1$ then  
$$ S^{t_0}(V_1)\tensor F^{*}(S^{t_1}(V_1))\tensor \cdots \tensor
F^{*m}(S^{t_m}(V_1))\tensor z^i\subseteq W.$$\end{lemma}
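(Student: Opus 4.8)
The plan is to reduce the statement to a fact about the $SL(V_1)$-module structure of $S^{d-i}(V_1)$ and then to identify the displayed subspace with its socle. Since $W(i)\neq 0$, write $W(i)=B_i\tensor z^i$ with $0\neq B_i\subseteq S^{d-i}(V_1)$ as in Remark~\ref{r2}. Embed $GL_n=GL(V_1)$ into $P$ as the block-diagonal subgroup with $g_{11}=1$ and $*=0$; by the formula for $\rho$ this subgroup fixes $z$ and acts in the standard way on $V_1$, hence preserves the $z$-degree. As $W$ is $P$-stable, $B_i$ is therefore a nonzero $GL(V_1)$-submodule of $S^{d-i}(V_1)$. (One may note in passing that, since the maximal torus of $GL(V_1)$ acts on $S^{d-i}(V_1)$ with one-dimensional weight spaces, each spanned by a single monomial, $B_i$ is automatically spanned by monomials $x^{S}$ of degree $d-i$; this recovers the first part of Lemma~\ref{r1} in this graded piece.) Thus it suffices to prove that the subspace $N:=\phi\bigl(S^{t_0}(V_1)\tensor F^{*}(S^{t_1}(V_1))\tensor\cdots\tensor F^{*m}(S^{t_m}(V_1))\bigr)\subseteq S^{d-i}(V_1)$, with $\phi$ the canonical multiplication map of Remark~\ref{r2}, is contained in every nonzero $GL(V_1)$-submodule of $S^{d-i}(V_1)$.

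First I would identify $N$ as a simple module. Since $d-i=t_0+t_1p+\cdots+t_mp^m$ with each $0\leq t_j\leq p-1$, every factor $S^{t_j}(V_1)$ is the simple module $L(t_j\varpi_1)$, and Steinberg's tensor product theorem gives $\bigotimes_{j} F^{*j}(S^{t_j}(V_1))\cong L((d-i)\varpi_1)$ as an $SL(V_1)$-module. The map $\phi$ is $GL(V_1)$-equivariant and nonzero, since its image contains $x_1^{d-i}=x_1^{t_0}(x_1^{t_1})^{p}\cdots(x_1^{t_m})^{p^m}$; being a nonzero map out of a simple module it is injective, so $N\cong L((d-i)\varpi_1)$. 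Now $S^{d-i}(V_1)$ is the induced (dual Weyl) module $\nabla((d-i)\varpi_1)$, whose socle is the single simple submodule $L((d-i)\varpi_1)$. Hence $N$ is precisely the socle of $S^{d-i}(V_1)$, i.e.\ its unique minimal nonzero submodule, and in particular $N\subseteq B_i$. Combining with the previous paragraph, $N\tensor z^i\subseteq B_i\tensor z^i=W(i)\subseteq W$, which is the assertion of the lemma.

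The step carrying all the weight is the identification of $N$ with the socle, i.e.\ the claim that $N$ lies inside every nonzero submodule of $S^{d-i}(V_1)$; everything else is bookkeeping with the $P$-action. If one prefers to stay within the elementary framework of Lemma~\ref{r1} and avoid quoting Steinberg's theorem, this step can be carried out by hand: first use the torus to extract a single monomial $x^{S}z^i\in W$ from $B_i$, then push it with the one-parameter unipotent subgroups $x_b\mapsto x_b+cx_a$ of $GL(V_1)\subset P$, expand binomially, and extract homogeneous components with the torus exactly as in the proof of part~(2) of Lemma~\ref{r1}. Lucas' theorem keeps the relevant binomial coefficients prime to $p$, so one reaches the generating monomials $x^{a^{(0)}}(x^{a^{(1)}})^{p}\cdots(x^{a^{(m)}})^{p^m}$ with $|a^{(j)}|=t_j$ that span $N$, and the $GL(V_1)$-irreducibility of $N$ then produces the whole of $N\tensor z^i$. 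Either route yields $S^{t_0}(V_1)\tensor F^{*}(S^{t_1}(V_1))\tensor\cdots\tensor F^{*m}(S^{t_m}(V_1))\tensor z^i\subseteq W$.
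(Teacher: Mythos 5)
Your proposal is correct and takes essentially the same route as the paper: both reduce the lemma to the fact that $S^{t_0}(V_1)\tensor F^{*}(S^{t_1}(V_1))\tensor \cdots \tensor F^{*m}(S^{t_m}(V_1))$ is the unique minimal nonzero $SL(V_1)$-submodule of $S^{d-i}(V_1)$, which the paper simply quotes from [B] after observing that $W(i)=W_1\tensor z^i$ for a nonzero submodule $W_1$. Your only addition is to prove that cited fact rather than cite it --- via Steinberg's tensor product theorem together with the simplicity of the socle of the induced module $S^{d-i}(V_1)\cong\nabla((d-i)\varpi_1)$ --- which is a correct, standard derivation.
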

\begin{proof}$W(i)\neq 0$ means $(S^{d-i}(V_1)\tensor z^i) \cap W \neq 
0$. Therefore $W = W_1\tensor z^i$, for some nonzero $SL(n)$-submodule 
$W_1$
of $S^{d-i}(V_1)$ (with 
respect to the induced action on $S^{d-i}(V_1)$ coming from the canonical 
action of $SL(n)$ on $V_1$. But (see [B]), for the 
$p$-adic 
expansion $t_0+t_1p+\cdots + t_mp^m = d-i$ of the integer $d-i$, the 
$SL(n)$-module  
$ S^{t_0}(V_1)\tensor F^{*}(S^{t_1}(V_1))\tensor \cdots \tensor
F^{*m}(S^{t_m}(V_1))$ is the smallest $SL(n)$-submodule of $S^{d-i}(V_1)$. 
In particular, it is contained in $W_1$. 
This proves the lemma.\end{proof}

\begin{remark}\label{r5}
For a vector-bundle $\sV$ on ${\bP^n_k}$, with determinant $\det(\sV) =
\sO_{\bP^n_k}(m)$, we define $\deg(\sV) = m$ and 
  $\mu(\sV) = \deg~(\sV)/\rank(\sV)$. 
We note that  
$$\deg~\sW = \sum \deg~\sW(i).$$ 
Since
$\sV_1$ is a semistable vector bundle on $\bP^n_k$, 
by Theorem~2.1 of [MR],
the vector bundle $S^{d-i}(\sV_1)$ is  semistable  on $\bP^n_k$. Hence 
$$-\deg~\sW(i) \geq -\mu(S^{d-i}(\sV_1)\tensor \sO(i))|W(i)|.$$ 
\end{remark}

Now we prove a series of lemmas before coming to the main result.

\begin{lemma}\label{l8} If $d = a_0 < p$ then, for $0\subsetneqq \sW 
\subsetneqq 
\sV_d$, we have  $-\deg~\sW \geq a_0 $.
In particular  $\mu(\sW) < \mu(\sV_d)$.
\end{lemma}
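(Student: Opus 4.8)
The plan is to exploit the structural results just established to pin down $\sW$ completely, so that the lemma collapses to a single one-variable combinatorial inequality. Since $d = a_0 < p$, for any $i$ with $W(i)\neq 0$ the integer $d-i \leq d < p$ has $p$-adic expansion consisting of its zeroth digit alone, so Lemma~\ref{l7} forces $S^{d-i}(V_1)\tensor z^i \subseteq W$; as $W(i)\subseteq S^{d-i}(V_1)\tensor z^i$ by definition, this yields $W(i) = S^{d-i}(V_1)\tensor z^i$, i.e. every nonzero piece is the \emph{full} module. By Remark~\ref{r2} (again with single-digit expansions, so the digit condition reduces to $j\leq i$) the set $\{i : W(i)\neq 0\}$ is downward closed, hence equals $\{0,1,\dots,s\}$ for some $s$; and since $s=d-1$ would give $W=V_d$, properness of $\sW$ forces $0\leq s\leq d-2$. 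Thus $\sW = \sF_s$ with $\sW(i) = S^{d-i}(\sV_1)\tensor\sO(i)$ for $0\leq i\leq s$.

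Next I would compute $-\deg\sW$ exactly. Because each $\sW(i)$ is now the \emph{full} semistable bundle $S^{d-i}(\sV_1)\tensor\sO(i)$, the inequality of Remark~\ref{r5} becomes an equality; using $\mu(S^{d-i}(\sV_1)) = -(d-i)/n$ and $|W(i)| = \binom{d-i+n-1}{n-1}$ this gives
\[
-\deg\sW = \sum_{i=0}^{s}\left(\frac{d-i}{n}-i\right)\binom{d-i+n-1}{n-1} =: S(s).
\]
Taking $s=d-1$ recovers $-\deg\sV_d = d$, so the desired bound $-\deg\sW\geq a_0 = d$ is exactly the assertion $S(s)\geq S(d-1)$ for every $0\leq s\leq d-2$.

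The hard part is this inequality, which I would settle by a unimodality argument. The increment $S(s)-S(s-1)$ equals the single term $\big(\tfrac{d-s}{n}-s\big)\binom{d-s+n-1}{n-1}$, whose sign is that of $d-(n+1)s$, a strictly decreasing function of $s$; hence the sequence $S(0),S(1),\dots,S(d-1)$ rises then falls, so its minimum over $\{0,\dots,d-2\}$ is attained at an endpoint. At $s=0$ one has $S(0) = \tfrac{d}{n}\binom{d+n-1}{n-1}\geq d$ because $\binom{d+n-1}{n-1}\geq\binom{n}{n-1}=n$; at $s=d-2$, subtracting from $S(d-1)=d$ the $i=d-1$ term, which equals $1-n(d-1)$, gives $S(d-2) = (n+1)(d-1)\geq d$ for $d\geq 2$, $n\geq 3$. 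Therefore $S(s)\geq d$ throughout, i.e. $-\deg\sW\geq a_0$. Finally, since $\rank\sW<\rank\sV_d$ and $\deg\sW\leq -d<0$, I conclude $\mu(\sW)\leq -d/\rank\sW < -d/\rank\sV_d = \mu(\sV_d)$, which proves the lemma. The only genuine obstacle is verifying the two endpoint bounds of the unimodal sum; the rest is forced by Lemmas~\ref{r1} and~\ref{l7}.
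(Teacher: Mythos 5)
Your proof is correct and takes essentially the same route as the paper: both arguments use Remark~\ref{r2} and Lemma~\ref{l7} to force $W$ to be the full truncation $\bigoplus_{j=0}^{s}S^{d-j}(V_1)\tensor z^{j}$ for some $s\leq d-2$, and then bound the resulting explicit degree sum $\frac{1}{n}\sum_{j=0}^{s}\left[(d-j)-nj\right]|S^{d-j}(V_1)|$ from below by $a_0$. The only divergence is the final arithmetic: the paper telescopes the sum to a closed form via the identity $(a+1)|S^{a+1}(V_1)| = n(|S^{a}(V_1)|+\cdots+|S^{0}(V_1)|)$ (where the printed value $(i_0+1)|S^{a_0-i_0-1}(V_1)|$ should in fact read $(i_0+1)h^0(\sO_{\bP^n_k}(a_0-i_0-1))$, though the bound $\geq a_0$ survives either way), whereas you obtain the same bound from unimodality of the partial sums together with the two endpoint evaluations $S(0)=\frac{d}{n}\binom{d+n-1}{n-1}\geq d$ and $S(d-2)=(n+1)(d-1)\geq d$ --- an equally valid, and arguably more robust, verification.
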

\begin{proof} For $d = a_0\leq p-1$, 
 the $P$-module $V_{a_0}$ is filtered by the 
subquotients isomorphic to  $S^{d-i}(V_1)\tensor z^{i}$,
where $ 0 \leq i < a_0 $. If $i_0$ is the 
largest integer
with the property that $ W(i_0) \neq 0$ then,
by Remark~\ref{r2} and Lemma~\ref{l7}, it follows that  
 $W = \sum_{j=0}^{i_0} S^{d-j}(V_1)\tensor z^{j}$, as 
$P$-module.
Therefore 
$$\begin{array}{lcl}
-\deg~\sW & = & \sum_{j=0}^{i_0} -\deg~(S^{d-j}(\sV_1)\tensor
z^{j})\\
&  =  & -\sum_{j=0}^{i_0} \mu~(S^{d-j}(\sV_1)\tensor
z^{j})|S^{d-j}(V_1)|\\
& =  & \frac{1}{n} \sum_{j=0}^{i_0} 
\left[(d-j)-nj\right]|S^{d-j}(V_1)|\\
&  = & (i_0+1)|S^{a_0-i_0-1}(V_1)| \geq a_0,\end{array}$$
where the second last equality follows from the fact that, 
for an integer $a\geq 0$, we have 
 $$(a+1)|S^{a+1}(V_1)| = n(|S^{a}(V_1)|+\cdots + |S^1(V_1)|+|S^0(V_1)|).$$
This proves the lemma.\end{proof}
 
In the rest of this section, we assume that the integer $d$ has the 
$p$-adic 
expansion $d = a_0+a_1p+\cdots +a_mp^m$ such that $a_0$ and $a_m$ are 
nonzero integers.
\begin{remark}\label{r3}
For $i_0+\cdots+ i_mp^m < a_0+\cdots +
a_mp^m$ (where $0\leq i_0, \ldots, i_m \leq p-1$),  let
$$W(i_0+i_1p+\cdots +i_mp^m) = W_{i_0,\cdots, i_m} = W\cap
\left[S^{d-(i_0+\cdots+i_mp^m)}(V_1)\tensor
z^{i_0+\cdots+i_mp^m}\right] $$
be the subspace with canonical $P$-(subquotient) structure and 
let  $\sW_{i_0,\cdots, i_m}$ be the associated $G$-bundle. Then, by
Lemma~\ref{r1}, 

$$\mbox{gr}~\sW =\bigoplus_{(i_0,\ldots, i_m)\in C_0(\sW)\cup\cdots \cup
C_m(\sW)}\sW_{i_0,\cdots,
i_m},$$
where 
$$\begin{array}{lcl}
C_0(\sW)  &  = & \{(i_0, a_1, \ldots , a_m)\mid 0\leq i_0 < a_0
~\mbox{and}~~ 
W_{i_0,\cdots, a_m} \neq 0\}\\
 & \vdots  & \\
C_j(\sW)  &  = & \{(i_0, \ldots, i_j, a_{j+1}, \ldots , a_m)\mid 0\leq i_j
< a_j~~\mbox{and}~~W_{i_0,\cdots, a_m} \neq 0 \}\\
 & \vdots  & \\
C_{m-1}(\sW)  &  = & \{(i_0, \ldots, i_{m-1}, a_m)\mid 0\leq i_{m-1}
< a_{m-1},~~\mbox{and}~~W_{i_0,\cdots, a_m} \neq 0 \}\\
C_m(\sW)  &  = & \{(i_0, \ldots, i_{m-1}, i_m)\mid 0\leq i_m <a_m, 
~~\mbox{and}~~W_{i_0,\cdots, i_m} \neq 0 \}\\
\end{array}$$
 Note that if $a_j = 0$, for some $j$ then $C_j(\sW) = \phi $.
Now 
$$\begin{array}{lcl}
-\mu(\sV_d)|W| & = & \displaystyle{\frac{d}{|V_{d}|}}
\left[\sum_{\{(i_0,\ldots, i_m)\in
C_m(\sW)\}}|\sW_{i_0,\ldots,
i_m}| + \sum_{\{(i_0,\ldots, i_m)\in C_0(\sW)\cup \cdots \cup 
C_{m-1}(\sW)\}}|\sW_{i_0,\ldots, i_m}|\right]\\
 & <  & (n |C_m(\sW)|) + (|C_0(\sW)|+ \cdots +
|C_{m-1}(\sW)|),\end{array}$$

where the last inequality follows, as
\begin{enumerate}
\item  for any  positive integer $a\geq 1$,  we
have 
$$\frac{a}{|V_a|}|S^a(V_1)| = \frac{a|S^a(V_1)|}{h^0(\sO(a))-1} = 
\frac{na|S^a(V_1)|}{(a+1)|S^{a+1}(V_1)|-n} < n.$$ 
Therefore $\{i_0,\ldots, i_m\} \in C_m(\sW) \implies 
\frac{d}{\sV_d}|\sW_{i_0,\ldots,i_m}| < n$.
\item the canonical inclusion (but not a  surjection)
$$F^{*m}S^{a_m}(V_1)\tensor
S^{a_0+\cdots + a_{m-1}p^{m-1}}(V_1) \hookrightarrow
 S^{a_0+\cdots + a_mp^m}(V_1)$$
 implies
 that 
$n|S^{a_0+\cdots+a_{m-1}p^{m-1}}(V_1)| <
|S^{a_0+\cdots+a_{m}p^{m}}(V_1)|$. Now if 
$$\{i_0, \ldots, i_m\} \in
C_0(\sW)\cup \cdots \cup C_{m-1}(\sW), ~~~\mbox{then}~~i_m = a_m,$$
which implies that $|W_{i_0,\ldots,
i_m}| \leq |S^{a_0+\cdots + a_{m-1}p^{m-1}}(V_1)|$. This implies that 
$\frac{d}{|V_d|}|\sW_{i_0,\ldots,i_m}| < 1.$ \end{enumerate}
\end{remark}

\begin{lemma}\label{l6}If $ W(a_0) = 0$ then $\mu(\sW) < \mu(\sV_d)$.
\end{lemma}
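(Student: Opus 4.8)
The plan is to prove directly that
$$-\deg\sW \;\geq\; n\,|C_m(\sW)| + \sum_{j=0}^{m-1}|C_j(\sW)|,$$
for then the strict inequality $-\mu(\sV_d)|W| < n\,|C_m(\sW)| + \sum_{j=0}^{m-1}|C_j(\sW)|$ of Remark~\ref{r3} yields $-\deg\sW > -\mu(\sV_d)|W|$, i.e. $\mu(\sW) < \mu(\sV_d)$, as wanted. To bound $-\deg\sW$ from below I would use Remark~\ref{r5}: since each $S^{d-i}(\sV_1)$ is semistable, the subsheaf $\sW(i)\subseteq S^{d-i}(\sV_1)\tensor\sO(i)$ satisfies
$$-\deg\sW \;=\; \sum_{i}-\deg\sW(i)\;\geq\; \sum_{i}\frac{d-(n+1)i}{n}\,|W(i)|.$$

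The hypothesis enters as follows. By Remark~\ref{r2}, $W(a_0)=0$ forces every nonzero $W(i)$ to have $0$th $p$-adic digit $i_0\leq a_0-1$ (otherwise the digitwise inequality $(a_0,0,\ldots,0)\leq(i_0,i_1,\ldots)$ would give $W(a_0)\neq0$). I would therefore group the indices $i$ by their higher digits $h=i_1p+\cdots+i_mp^m$, writing $i=i_0+h$. In each such \emph{column} the digit $i_0$ runs only over $0,\ldots,a_0-1$, which is exactly the sub-$p$ regime handled in Lemma~\ref{l8}. All indices of a fixed column lie in a single stratum (the one fixed by the top digit where $h$ falls short of $d$), and the column lies in $C_m(\sW)$ precisely when its top digit $i_m<a_m$. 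This reduces the target to a columnwise estimate: a column in $C_m$ must contribute at least $n$ times its number of nonzero layers, a column in $C_{<m}$ at least that number.

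Within a single column, lowering only $i_0$ (the binomial-coefficient argument of Lemma~\ref{r1}(2), whose coefficients are units because $i_0<a_0\leq p-1$) gives, from the top nonzero layer $W(s+h)=B\tensor z^{s+h}$, the inclusions $W(i_0+h)\supseteq\phi\bigl[B\tensor S^{s-i_0}(V_1)\bigr]\tensor z^{i_0+h}$, hence lower bounds on $|W(i_0+h)|$ of the same shape used in Lemma~\ref{l8}. Summing these and invoking the identity $(a+1)|S^{a+1}(V_1)|=n\bigl(|S^{a}(V_1)|+\cdots+|S^{0}(V_1)|\bigr)$ reproduces the Lemma~\ref{l8} bound in each column, which disposes of the weight-$1$ strata $C_0,\ldots,C_{m-1}$.

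The hard part will be the factor $n$ demanded by the stratum $C_m$. Columns in $C_m$ with large $h$ have layers of large index $i$, where the coefficient $\tfrac{d-(n+1)i}{n}$ is negative, so such a column cannot pay for its own required $n$ per layer. I expect the remedy to be cross-column: by Remark~\ref{r2}, a nonzero layer in a high column forces nonzero layers of controlled dimension in the columns with smaller higher-digits $h$, where the coefficients are large and positive. The genuine work is the bookkeeping that matches these forced bottom contributions against the deficits of the top columns so that the total still dominates $n\,|C_m(\sW)|+\sum_{j<m}|C_j(\sW)|$; this cross-stratum dimension count, carried out through the Frobenius-twisted symmetric-power factors of Remark~\ref{r2}, is the main obstacle.
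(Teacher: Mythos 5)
Your target inequality $-\deg\sW\geq n|C_m(\sW)|+\sum_{j<m}|C_j(\sW)|$, the appeal to Remark~\ref{r3} to conclude, and the observation that $W(a_0)=0$ forces $i_0<a_0$ in every nonzero stratum (via Remark~\ref{r2}) all agree with the paper. But the proposal is not a proof, and the gap is larger than the one you flag: the step you defer as ``bookkeeping'' is the entire content of the lemma, and the step you claim is already settled is in fact broken. Every tuple counted in $C_0(\sW),\ldots,C_{m-1}(\sW)$ has $i_m=a_m$, hence $i\geq a_mp^m$; since $d< (a_m+1)p^m\leq (n+1)a_mp^m$, every coefficient $\frac{d-(n+1)i}{n}$ occurring in your starting estimate $-\deg\sW\geq\sum_i\frac{d-(n+1)i}{n}|W(i)|$ is strictly negative on those columns, so the within-column sum is negative no matter which dimension lower bounds Remark~\ref{r2} supplies --- no telescoping can rescue a sum all of whose coefficients have the wrong sign. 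The identity $(a+1)|S^{a+1}(V_1)|=n(|S^a(V_1)|+\cdots+|S^0(V_1)|)$ produces a positive total in Lemma~\ref{l8} only because there $d=a_0<p$, the higher digits vanish, and $W$ contains the \emph{full} layers $S^{d-j}(V_1)\tensor z^j$; in a column at height $h=i_1p+\cdots+i_mp^m>0$ each coefficient is shifted down by $(n+1)h/n$ per unit dimension, which the identity cannot absorb. So not only the factor-$n$ estimate for $C_m(\sW)$, but already the weight-one strata, are out of reach of the direct per-layer semistability bound of Remark~\ref{r5}.

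The paper's proof abandons the varying slopes altogether, and this is the missing idea. From the multiplication map $f\colon H^0(\sO(a_0))\tensor F^{*}\sV_{a_1+\cdots+a_mp^{m-1}}\to\sV_d$ it forms $\sV'=\ker\bigl(\sV_d\to\coker(f)\to\oplus~\sO_{\bP^n_k}\bigr)$; since $\sW/(\sW\cap\sV')$ embeds in a trivial bundle, $-\deg\sW\geq-\deg\sW'$ for $\sW'=\sW\cap\sV'$, and the hypothesis $W(a_0)=0$ is used precisely to place $\sW'$ inside the \emph{single} semistable bundle $\sV_{a_0}\tensor H^0(\sO(a_1+\cdots+a_mp^{m-1}))^{(p)}$, of uniform slope $-a_0/|V_{a_0}|$. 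The degree bound thereby becomes a pure dimension count, $-\deg\sW'\geq\frac{a_0}{|V_{a_0}|}|\sW'|$, with no sign problem at any layer. Lemma~\ref{l7} (the minimal $SL(n)$-submodule $S^{j_0}(V_1)\tensor F^{*}S^{j_1}(V_1)\tensor\cdots\tensor F^{*m}S^{j_m}(V_1)$) guarantees that every nonzero stratum of $W$ survives in $W'$ with dimension at least $|S^{j_0}(V_1)|\cdots|S^{j_m}(V_1)|$, where $j_0+j_1p+\cdots+j_mp^m=d-i$; for a tuple in $C_m(\sW)$ one has $i_0<a_0$ and $i_m<a_m$, so $j_0>0$ and some higher digit is positive, giving $|W'_{i_0,\ldots,i_m}|\geq n|S^{a_0-i_0}(V_1)|$ --- this is where the factor $n$ actually comes from --- and the columnwise inequality $\frac{a_0}{|V_{a_0}|}\bigl(|S^{a_0}(V_1)|+\cdots+|S^{a_0-k}(V_1)|\bigr)\geq k+1$ then yields $-\deg\sW'\geq n|C_m(\sW)|+|C_0(\sW)|+\cdots+|C_{m-1}(\sW)|$. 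If you want to repair your argument, replace the per-layer slope estimate by this intersection trick, which trades $\sW$ for $\sW'$ measured against one fixed slope.
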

\begin{proof}Consider the following diagram of $G$-bundles,
$$\begin{array}{c}
 0\\
\downarrow\\
 0\rightarrow\sV_{a_0}\tensor F^{*}\sV_{a_1+\cdots+a_{m}p^{m-1}} 
\rightarrow H(a_0)\tensor
F^*\sV_{a_1+\cdots+a_mp^{m-1}}\rightarrow\sO(a_0)\tensor
F^*\sV_{a_1+\cdots+a_mp^{m-1}}\rightarrow 0 \\
 \downarrow\\
 \sV_{a_0+\cdots + a_mp^m}\\
 \downarrow\\
0 \longrightarrow \sV_{a_0}\tensor \sO(a_1+\cdots+a_mp^m)\longrightarrow
\coker(f) \longrightarrow
\oplus~\sO_X \longrightarrow 0,\hspace{3cm}\\
\downarrow \\
0
\end{array}$$

where $H(a_0) = H^0(\sO(a_0))$ and $f:H(a_0)\tensor
F^*(\sV_{a_1+\cdots+a_mp^{m-1}})
 \by{} \sV_{a_0+\cdots+a_mp^m}$ is the canonical map.
We note that, if $\sV'$ denotes the homogeneous bundle 
$\sV' =$ kernel of the canonical composite map 
$$\sV_{a_0+\cdots
a_mp^m}\by{}\coker(f) \by{} \oplus~\sO_X $$
and $V' \subset V_{a_0+\cdots +a_mp^m}$ the corresponding $P$-submodule, 
 then 
$$-\deg{\sW} \geq -\deg~\sW', ~\mbox{where}~W' = W\cap V'.$$
Since $ W(a_0) = 0$,
we have
 $$\sW'\subseteq 
\sV_{a_0}\tensor H^0(\sO(a_1+\cdots+a_mp^{m-1}))^{(p)},$$
which is a semistable vector bundle over $\bP^n_k$, as $\sV_{a_0}$
is semistable (see Lemma~\ref{l8}). Therefore
$-\deg~\sW'\geq \frac{a_0}{\sV_{a_0}}|~\sW'|$.

We remark that $|C_j(\sW')| = |C_j(\sW)|$, 
where we define $C_j(\sW)$ and
$C_j(\sW')$ as in Remark~\ref{r3}: 
Note $W_{i_0,\ldots,i_m}\neq 0$ implies that $i_0 < a_0$ as $W(a_0) = 0$. Let 
$$ j_0 + \cdots + j_mp^m = d-(i_0+\cdots +i_mp^m),~~~\mbox{where}~~
  0\leq j_t \leq p-1,$$
then $j_0 < a_0$. By Lemma~\ref{l7}, 
$$S^{j_0}(V_1)\tensor \cdots \tensor F^{*m}S^{j_m}(V_1)\tensor 
z^{i_0+\cdots +i_mp^m}\subseteq W_{i_0, \ldots, i_m}.$$
Therefore
$$S^{j_0}(V_1)\tensor \cdots \tensor F^{*m}S^{j_m}(V_1)\tensor 
z^{i_0+\cdots +i_mp^m} \subseteq
W_{i_0, \ldots, i_m} \cap V'_{i_0,\ldots, i_m} = W'_{i_0, \ldots,
i_m}.$$
In particular $W'_{i_0,\ldots,i_m}\neq 0$ and $|C_j(\sW')| = |C_j(\sW)|$. 
\linebreak Moreover $|W'_{i_0,\ldots, i_m}|\geq 
|S^{j_0}(V_1)|\cdots |F^{*m}S^{j_m}(V_1)|$.

But
$$\begin{array}{lcl}
 -\deg~\sW'& \geq  & 
\displaystyle{\frac{a_0}{|V_{a_0}|}}\left[\sum_{\{(i_0, \ldots, i_m)\in
C_m(\sW')\}}|W'_{i_0,\cdots,i_m }|
 +
\sum_{\{(i_0, \ldots, i_{m})\in C_0(\sW')\cup\cdots\cup  C_{m-1}(\sW')\}} 
|W'_{i_0,\cdots, a_m}|\right] \end{array}$$ 
If $(i_0,\ldots, i_m)\in C_m(\sW')$ then $i_0<a_0$ and $i_m < a_m$. 
Therefore, for the $p$-adic expansion 
$$j_0+\cdots +j_mp^m = a_0+\cdots + a_mp^m -(i_0+\cdots +i_mp^m),$$
 we 
have 
$j_0 > 0$ and $j_1p+\cdots + j_mp^m > 0$. In particular
$$|W'_{i_0, \ldots, i_m}| \geq |S^{j_0}(V_1)|\cdots 
|S^{j_m}(V_1)|\geq n|S^{j_0}(V_1)| = n|S^{a_0-i_0}(V_1)| .$$
Now 
$$\begin{array}{lcl}
(\ast) & := & \displaystyle{\frac{a_0}{|V_{a_0}|}}
\sum_{\{(i_0, \ldots, i_m)\in C_m(\sW')\}}|W'_{i_0,\cdots,i_m }|\\
& = & \displaystyle{\frac{a_0}{|V_{a_0}|}}\sum_{\{(i_1, \ldots, i_m)\mid 
i_m<a_m\}}{}
\sum_{\{k\mid (k,i_1,\ldots, i_m)\in
C_m(\sW')\}}|W'_{k, i_1,\cdots,i_m }|\\
& \geq & 
 \displaystyle{\frac{a_0}{|V_{a_0}|}}\sum_{\{(i_1, \ldots, i_m)\mid 
i_m<a_m\}}
\sum_{\{k\mid (k,i_1,\ldots, i_m)\in
C_m(\sW')\}}n|S^{a_0-k}(V_1)|.\end{array}$$

Let 
$I_0(i_1, \ldots, i_m) =  0$ if  $(k, i_1, \ldots, i_m) \not\in 
C_m(\sW')$  for all  $k$, otherwise define 

 $$I_0(i_1, \ldots, i_m) = \mbox{max}~\{k_0+1\mid (k_0, i_1,
\ldots, i_m) \in C_m(\sW')\}.$$ 

Then from the inequality 
$$\frac{a_0}{|V_{a_0}|}\left(|S^{a_0}(V_1)|+ \cdots +
|S^{a_0-k}(V_1)|\right) \geq k+1,$$  
for any $0\leq k \leq a_0-1$, it follows that 
$$(\ast) \geq 
n\sum_{\{(i_1,\ldots, i_m)\mid i_m <a_m\}} I_0(i_1, \ldots, i_m) 
= n|C_m(\sW')|.$$
Similarly, we can argue that 
$$\begin{array}{lcl}
\displaystyle{\frac{a_0}{|V_{a_0}|}}
\sum_{\{(i_0, \ldots, a_{m})\in C_0(\sW')\cup \cdots \cup C_{m-1}(\sW')\}} 
|W'_{i_0,\cdots, a_m}| & \geq & 
\displaystyle{\sum_{\{(i_1,\ldots, i_{m-1}, a_m)\}}}I_0(i_1, \ldots, 
i_{m-1},
a_m)\\
&   = & |C_0(\sW')|+ \cdots
+ |C_{m-1}(\sW')|.\end{array}$$
 
This implies 
$$\begin{array}{lcl}
-\deg~\sW' & \geq & n|C_m(\sW')| + (|C_0(\sW')|+\cdots + 
|C_{m-1}(\sW')|)\\
 &  = & n|C_m(\sW)|+(|C_0(\sW)| + \cdots + |C_{m-1}(\sW)|).\end{array}$$

On the other hand, by Remerk~\ref{r3},
$$-\mu(\sV_d)|W|  <  (n|C_m(\sW)|) + (|C_0(\sW)|+\cdots + 
|C_{m-1}(\sW)|),$$
which implies that $-\mu(\sV_d)|W| \leq  -\deg~\sW $.
This proves the lemma. \end{proof}

\begin{lemma}\label{l2}If  $W(a_0+
\cdots + a_{m-1}p^{m-1})\neq 0$ then 
$$-\deg(\sW) \geq a_0 + \cdots +a_mp^m = -\deg~(\sV_d).$$
In particular $\mu(\sW) < \mu(\sV_d)$, if $\sW\subsetneqq \sV$.
\end{lemma}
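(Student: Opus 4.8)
Lemma~\ref{l2} asserts that if the top graded piece $W(a_0+\cdots+a_{m-1}p^{m-1})$ is nonzero, then $-\deg(\sW) \geq a_0+\cdots+a_mp^m = -\deg(\sV_d)$, from which the strict slope inequality $\mu(\sW) < \mu(\sV_d)$ follows for a proper subbundle. The hypothesis is the ``opposite'' of Lemma~\ref{l6}: there we assumed $W(a_0)=0$, here we assume a specific high piece survives. The plan is to exploit the propagation machinery of Remark~\ref{r2} and Lemma~\ref{l7} to show that this single nonzero piece forces so many other pieces $W(i)$ to be nonzero (and large) that the degree bound is forced.

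**The approach.** First I would unwind what $W(a_0+\cdots+a_{m-1}p^{m-1})\neq 0$ means. Set $i^* = a_0+a_1p+\cdots+a_{m-1}p^{m-1}$, so $d - i^* = a_mp^m$ has $p$-adic digits $(0,\ldots,0,a_m)$. By Lemma~\ref{l7}, $W$ then contains $F^{*m}(S^{a_m}(V_1))\tensor z^{i^*}$. The key is that $i^*$ is the largest index whose digits $(a_0,\ldots,a_{m-1},0)$ are all $\leq$ the corresponding digits of $d=(a_0,\ldots,a_{m-1},a_m)$ except in the top slot; so by the digit-domination criterion of Remark~\ref{r2}, every index $i = i_0+\cdots+i_mp^m$ with $i_k \leq a_k$ for all $k$ and $i_m=0$ satisfies $W(i)\neq 0$. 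I would then compute $-\deg(\sW) = \sum_i -\deg(\sW(i))$ using the semistability bound of Remark~\ref{r5}, namely $-\deg(\sW(i)) \geq -\mu(S^{d-i}(\sV_1)\tensor\sO(i))\,|W(i)|$, and show the contributions already sum to at least $d$. The cleanest route is to observe that the sets $C_j(\sW)$ of Remark~\ref{r3} are \emph{all fully populated}: since the top piece is nonzero, digit-domination forces $W_{i_0,\ldots,i_m}\neq 0$ for the full ranges defining each $C_j$, so each $|C_j(\sW)|$ attains its maximum. One then checks the resulting lower bound for $-\deg(\sW)$ matches $d=a_0+\cdots+a_mp^m$ exactly, paralleling the telescoping identity $(a+1)|S^{a+1}(V_1)| = n(|S^a(V_1)|+\cdots+|S^0(V_1)|)$ used in Lemma~\ref{l8}.

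**The main obstacle.** The hard part will be the bookkeeping: converting ``all $C_j$ are full'' into the sharp numerical bound $-\deg(\sW)\geq d$. In Lemma~\ref{l6} the analogous estimate only gave $-\deg(\sW') \geq n|C_m|+(|C_0|+\cdots+|C_{m-1}|)$, which was compared against the \emph{strict} upper bound $-\mu(\sV_d)|W| < n|C_m|+(\cdots)$; here I expect to need the exact value of $-\deg(\sV_d)$ and the corresponding full sums, so the inequality must be shown to be an equality (or to saturate) when $W=V_d$, and then monotone in $W$. I anticipate the telescoping identity from Lemma~\ref{l8} reappears in disguise once one factors the Frobenius-twisted decomposition $V_d \cong \bigoplus (\cdots)\tensor F^{*m}(\text{pieces})$, reducing the weighted count to the low-digit block $a_0+\cdots+a_{m-1}p^{m-1}$ scaled by the top block $p^m$ and $a_m$.

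**Summary of steps in order.** (i) Translate the hypothesis via Lemma~\ref{l7} to get an explicit submodule $F^{*m}(S^{a_m}(V_1))\tensor z^{i^*}\subseteq W$. (ii) Apply Remark~\ref{r2} repeatedly (digit-domination) to conclude $W(i)\neq 0$ for the full index range, i.e.\ each $C_j(\sW)$ is maximal. (iii) Bound each $|W(i)|$ from below by the smallest $SL(n)$-submodule (Lemma~\ref{l7}), giving $|W_{i_0,\ldots,i_m}|\geq \prod_k |S^{j_k}(V_1)|$ for the complementary digits $j_k$. (iv) Sum the degree contributions using Remark~\ref{r5}, and invoke the telescoping identity of Lemma~\ref{l8} together with the Frobenius factorization to identify the total with $a_0+\cdots+a_mp^m$. (v) Conclude $-\deg(\sW)\geq -\deg(\sV_d)$, hence $\mu(\sW)<\mu(\sV_d)$ for $\sW\subsetneqq\sV_d$, since a proper subbundle has strictly smaller $|W|$ and the per-unit-dimension bound is $< \mu(\sV_d)$ in absolute value on at least one piece.
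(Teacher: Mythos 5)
Your step (ii) contains the decisive gap: the claim that the hypothesis makes every set $C_j(\sW)$ of Remark~\ref{r3} ``fully populated'' is not justified, and is in fact false. The digit-domination of Remark~\ref{r2} propagates non-vanishing only \emph{downward}: from $W(i^*)\neq 0$ with $i^*=a_0+\cdots+a_{m-1}p^{m-1}$, whose digit vector is $(a_0,\ldots,a_{m-1},0)$, you may conclude $W(j)\neq 0$ only for $j$ with $j_k\leq a_k$ ($k\leq m-1$) and $j_m=0$. But the indices entering $C_j(\sW)$ for $j<m$ all have top digit equal to $a_m\neq 0$, so none of them is dominated by $i^*$, and nothing forces those pieces to be nonzero (take $W$ generated by the minimal piece $F^{*m}S^{a_m}(V_1)\tensor z^{i^*}$ of Lemma~\ref{l7}: then $W_{i_0,\ldots,i_{m-1},a_m}=0$ for all admissible tuples). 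Conversely, the pieces you cannot control are exactly the dangerous ones: for $W(i)$ with $i$ large (equivalently, involving $z^{a_mp^m}$), the semistability bound of Remark~\ref{r5} reads $-\deg\,\sW(i)\geq\left[\frac{d-i}{n}-i\right]|W(i)|$ with a \emph{negative} coefficient, so these terms push the sum down in proportion to $|W(i)|$, which can be as large as $|S^{d-i}(V_1)|$. Your step (iii) offers only the fixed minimal-submodule dimensions of Lemma~\ref{l7} as lower bounds on the low pieces; these are absolute constants that do not scale with the ranks of the high pieces, so the proposed summation in step (iv) cannot close. Since the inequality $-\deg\,\sW\geq d$ is attained with equality (e.g.\ for $d=a_0+a_1p$, $n=3$, the module $W=\bigl(V_1\cdot F^*V_1\bigr)\oplus\bigl(F^*V_1\tensor z^{a_0}\bigr)\oplus\bigl(V_1\tensor z^{a_1p}\bigr)$ with $a_0=a_1=1$ gives $-\deg\,\sW=d$ exactly), there is no slack to absorb such losses.

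What the paper does instead, and what your outline is missing, is a mechanism that \emph{couples} the ranks of the negatively-contributing pieces to those of positively-contributing partners. The paper first replaces $\sW$ by $\sW'=\sW\cap\sV'$ via the $\coker(f)$ diagram, decomposes $V'$ into blocks $A_{i_0\ldots i_j}\tensor F^{m*}V_{a_m}$ (the $B$-pieces) and $A_{i_0\ldots i_j}\tensor z^{a_mp^m}$ (the $C$-pieces), and proves in Claim~\ref{cm1} -- using the multiplicative form of propagation in Remark~\ref{r2}(2), not mere non-vanishing -- the rank inequalities $|B_{i_0\ldots i_j}|\geq|V_{a_m}|\,|C_{i_0\ldots i_j}|$ and their analogues. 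These let each negative term $-a_mp^m|\sC_{i_0\ldots i_j}|$ be absorbed by the matching positive term $\frac{a_mp^m}{|V_{a_m}|}|\sB_{i_0\ldots i_j}|$. The hypothesis $W(i^*)\neq 0$ is used precisely where your outline does not use it quantitatively: it makes $\tilde\sB_{-1}\neq 0$, so Lemma~\ref{l8} applied to $\hat\sB_{-1}\subseteq\sV_{a_m}$ yields the exact leading term $a_mp^m$, and (in assertion (2) of Claim~\ref{cm1}) it guarantees $B_{i_0}$ contains an explicit Frobenius-twisted submodule large enough to dominate $a_{i_0}p^{i_0}$. The remaining contribution $a_0+\cdots+a_{m-1}p^{m-1}$ is then extracted by the telescoping of Claim~\ref{cm2} and its higher analogues through an induction over the block size $j$. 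So while your raw ingredients (Lemma~\ref{l7}, Remark~\ref{r2}, semistability of graded pieces, a telescoping identity) all appear in the paper, the proposal lacks the $B$--$C$ balancing and the passage to $\sV'$ that make the sharp bound provable, and its central structural claim about the $C_j(\sW)$ is incorrect.
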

\begin{proof} For any $a\in {\mathbb N}$, let $H(a) = H^0(\bP^n_k,
\sO_{\bP^n_k}(a))$ and let  $H(a)^{(p^t)} = 
F^{*t}(H(a)\tensor\sO_{\bP^n_k})$, where
$F^t$ is the $t^{th}$ iterated Frobenius morphism. 
Note that there is an exact sequence of $G$-bundles 
$$0\by{} F^{*t}(\sV_a)\by{} H(a)^{(p^t)}\by{} \sO_{\bP^n_k}(ap^t)\by{}
0.$$
Let $\delta$ denote the following tensor product map of $G$-bundles:
$$\tensor_{i = 0}^{m-1}H(a_i)^{(p^i)}\by{} \tensor_{i
=0}^{m-1}\sO_{\bP^n_k}(a_ip^i) = \sO_{\bP^n_k}(a_0+a_1p+\cdots +
a_{m-1}p^{m-1}).$$
We then have an induced commutative diagram of homogeneous
$G$-bundles, with exact rows and coloumns (the term $\oplus~\sO_{\bP^n_k}$
denotes
a certain trivial vector bundle, with a $G$-action),
 $$\begin{array}{ccc}
{} & {}  {} & {}  0\\
{} & {}  {} & {} \downarrow\\
 0 & {}  0 & {}  \ker~\delta \tensor \sO(a_mp^m)\\
\downarrow{} {} & \downarrow{} & {}  \downarrow{}\\
0\rightarrow{{\displaystyle{\tensor_{i=0}^{m-1}H(a_i)^{(p^i)}\tensor
F^{*m}\sV_{a_m}}}}
 \rightarrow &
\displaystyle{\tensor_{i=0}^{m-1}H(a_i)^{(p^i)}\tensor
H(a_m)^{(p^m)}}  \rightarrow & 
\displaystyle{\tensor_{i=0}^{m-1}H(a_i)^{(p^i)}\tensor
\sO(a_mp^m)}\rightarrow 0 \\
 \downarrow{f} &  \downarrow{} & {} \downarrow{} \\
 0\rightarrow\sV_{a_0+\cdots +a_mp^m}\rightarrow & 
H(a_0+\cdots + a_mp^m)\tensor
\sO_{\bP^n_k}\rightarrow & \sO(a_0+\cdots
+a_mp^m) \rightarrow 0\\
 \downarrow{} & {} \downarrow{}  & \downarrow{} \\
\coker~(f) &  \oplus~\sO_{\bP^n_k} & {}  0 \\
 \downarrow{}  & \downarrow{} &  {} \\
 0 & {} 0 & {}.
\end{array}$$
  This gives the following diagram of homogeneous
$G$-bundles
$$\begin{array}{c}
{}  0 {}\\
{} \downarrow {}\\
 0   \rightarrow \ker\delta\tensor F^{*m}\sV_{a_m}  \rightarrow 
\displaystyle{\tensor_{i=0}^{m-1}H(a_i)^{(p^i)}}\tensor
F^{*m}\sV_{a_m} \rightarrow   \sO(a_0+\cdot\cdot+a_{m-1}p^{m-1})\tensor
F^{*m}\sV_{a_m} \rightarrow 0 \\
{} \downarrow  {f}\\
{}   \sV_{a_0+\cdots + a_mp^m}  {} \\
{}  \downarrow  {}\\
0 \longrightarrow \ker\delta\tensor
\sO(a_mp^m) 
\longrightarrow
\coker(f) \longrightarrow 
\oplus~\sO_{{\mathbf
P}^n_k} \longrightarrow 0\hspace{2cm}\\
{}  \downarrow {}\\
{}  0  {}.
\end{array}$$
We note that, if $\sV'$ denotes the homogeneous $G$-bundle 
 given by $\sV' =$ kernel of the canonical composite map 
$\sV_{a_0+\cdots +
a_mp^m}\by{}\coker(f) \by{} \oplus~\sO_{{\mathbf
P}^n_k}, $
{\it, i.e.}, 
$$0\by{} \sV' \by{} \sV_{a_0+\cdots + a_mp^m} \by{} 
\oplus~\sO_{{\mathbf P}^n_k}\by{} 0,$$ 
Then
$$-\deg{\sW} \geq -\deg{\sW'},~~\mbox{where}~~W' = W\cap V',$$
$V'$ is the $P$-module associated to $\sV'$ and $\sW'$ is the
$G$-bundle associated to the $P$-module $W'$.
Let 
$${ A_{-1}} = k.(z^{a_0+\cdots +a_{m-1}p^{m-1}})$$
and, 
for $0\leq i_0\leq m-1$, let 
$A_{i_0} = F^{*{i_0}}V_{a_{i_0}}\tensor
z^{a_0+\cdots +\hat{a_{i_0}p^{i_0}}+\cdots + a_{m-1}p^{m-1}}$,
where 
$$z^{a_0+\cdots +\hat{a_{i_0}p^{i_0}}+\cdots + a_{m-1}p^{m-1}} = 
z^{a_0+\cdots + a_{m-1}p^{m-1} - a_{i_0}p^{i_0}}.$$
Inductively, we define 
$$ A_{i_0\ldots i_j}  =    
F^{*i_0}V_{a_{i_0}}\tensor \cdots \tensor F^{*i_j}V_{a_{i_j}}\tensor
z^{a_0+\cdots + \hat{a_{i_0}p^{i_0}}+ \cdots +\hat{a_{i_j}p^{i_j}}+ \cdots
+
a_{m-1}p^{m-1}},$$
where $0\leq i_0 <\cdots <i_j \leq m-1$ and
 $$z^{a_0+\cdots + \hat{a_{i_0}p^{i_0}}+ \cdots +\hat{a_{i_j}p^{i_j}}+ 
\cdots
+ a_{m-1}p^{m-1}} = z^{a_0+\cdots + a_{m-1}p^{m-1}-(a_{i_0}p^{i_0}+\cdots 
+ a_{i_j}p^{i_j})}.$$
We can write 
$$ V' = \bigoplus_{j=-1}^{m-1}\bigoplus_{i_0, \ldots, 
i_j}A_{i_0,\ldots, 
i_j}\tensor F^{m*}V_{a_m}
+ \bigoplus_{j=0}^{m-1}\bigoplus_{i_0, \ldots, i_j}A_{i_0,\ldots, 
i_j}\tensor z^{a_mp^m},$$
where by 
$$\bigoplus_{i_0, \ldots, i_j}~~\mbox{we  mean}~~~\bigoplus_{ \{0\leq i_0 
< 
\cdots <  
i_j \leq m-1\mid A_{i_0, \ldots, i_j} \neq 0\}}.$$
 Note 
that 
 $A_{i_0\ldots i_j} \neq 0$ if and only if  $a_{i_k} \neq 0$, for every 
$i_k\in\{i_0, \ldots,  i_j\}$.  We also note 
that  
$A_{i_0,\ldots, 
i_j}\tensor F^{m*}V_{a_m}$ and $A_{i_0,\ldots, i_j}\tensor z^{a_mp^m}$ 
have canonical $P$-subquotient module structure.
By Lemma~\ref{r1}, 
$$W' = \bigoplus_{-1\leq j \leq m-1} {\tilde B_j} \bigoplus_{0\leq j \leq 
m-1}{\tilde C_j},$$
where 
 $${\tilde B_j} = \bigoplus_{i_0,\ldots, i_j} B_{i_0, \ldots, 
i_j},~~\mbox{where}~~ B_{i_0, \ldots, i_j} = (A_{i_0, \ldots, i_j}\tensor 
F^{m*}V_{a_m})\cap 
W \subseteq A_{i_0, \ldots, i_j}\tensor F^{m*}V_{a_m} $$
and 
$${\tilde C_j} = \bigoplus_{i_0, \ldots, i_j}C_{i_0, \ldots, 
i_j},~~\mbox{where}~~ 
C_{i_0, \ldots, i_j} = (A_{i_0, \ldots, i_j}\tensor z^{a_mp^m})\cap W 
\subseteq A_{i_0, \ldots, i_j}\tensor z^{a_mp^m}.$$
Then  $B_{i_0, \ldots, i_j}$ and $C_{i_0, \ldots, i_j}$ have canonical   
$P$-subquotient module structures. Let $\sB_{i_0, \ldots, i_j}$ and 
$\sC_{i_0, 
\ldots, i_j}$ be the associated $G$-subbundles in $\sA_{i_0, \ldots, 
i_j}\tensor F^{m*}\sV_{a_m}$ and $\sA_{i_0,\ldots, i_j}\tensor 
\sO(a_mp^m)$, respectively. Moreover
 it follows that  $A_{i_0,\ldots, i_j} \neq 
0$ implies 
 $B_{i_0, \ldots, i_j} \neq 0$. The bundle $\sW'$ has a filtration by 
$G$-subbundles such that subquotients are isomorphic to $\sB_{i_0, \ldots, 
i_j}$ or to $\sC_{i_0,\ldots, i_j}$. 

Therefore 
$$-\deg~\sW \geq -\deg~{\tilde \sB_{-1}}-\deg~{\tilde \sB_{0}} + \cdots 
-\deg~{\tilde \sB_{m-1}}-
\deg~{\tilde \sC_{0}} + \cdots -\deg~{\tilde \sC_{m-1}},$$
where ${\tilde \sB_j} $ and ${\tilde \sC_j}$ are  the $G$-bundles 
associated to 
$P$-modules ${\tilde B_j}$ and ${\tilde C_j}$.

Henceforth, for a vector bundle $\sB$, we denote rank~$\sB$ as $|\sB|$.
Note that ${\sA_{i_0, \ldots, i_j}}\tensor F^{*m}\sV_{a_m}$ and 
${\sA_{i_0, \ldots, i_j}}\tensor \sO(a_mp^m)$
are semistable bundles on $\bP^n_k$; as by Lemma~\ref{l8}, for $0\leq a 
\leq p-1$, the bundle 
$\sV_{a}$  is semistable, and hence, by Theorem~2.1 of [MR], all 
Frobenius pullbacks and 
tensor
products of such
bundles are semistable.

Now, for $j\geq 0$, 
$$-\deg~{\tilde \sB_j} = -\sum_{i_0,\ldots, i_j}\deg~\sB_{i_0,\ldots, i_j} 
\geq 
-\sum_{i_0,\ldots, i_j}\mu(\sA_{i_0,\ldots, i_j})|\sB_{i_0,\ldots, i_j}|
 + \sum_{i_0,\ldots, i_j} 
\frac{a_mp^m}{|\sV_{a_m}|}|\sB_{i_0,\ldots, i_j}|$$
$$ = \sum_{i_0, \ldots, i_j} \left[-(a_0+\cdots 
+a_{m-1}p^{m-1})+a_{i_0}p^{i_0} + \cdots + a_{i_j}p^{i_j}\right]
|\sB_{i_0, \ldots, i_j}| $$
$$+\sum_{i_0, \ldots, i_j}\left[\frac{a_{i_0}p^{i_0}}{|\sV_{a_{i_0}}|} + 
\cdots +
\frac{a_{i_j}p^{i_j}}{|\sV_{a_{i_j}}|} 
\right]|{\sB_{i_0, \ldots, i_j}}|  
   +\sum_{i_0,\ldots,i_j}\frac{a_mp^m}{|\sV_{a_m}| }|{\sB_{i_0, 
\ldots, i_j}}|,$$
and, for $j\geq 1$,
$$-\deg~{\tilde \sC_j} = -\sum_{i_0,\ldots, i_j}\deg~\sC_{i_0,\ldots, i_j} 
\geq 
-\sum_{i_0,\ldots, i_j}\mu(\sA_{i_0,\ldots, i_j})|\sC_{i_0,\ldots, i_j}|
 - \sum_{i_0,\ldots, i_j}
a_mp^m|\sC_{i_0,\ldots, i_j}|$$
$$ = \sum_{i_0, \ldots, i_j} \left[-(a_0+\cdots 
+a_{m-1}p^{m-1})+a_{i_0}p^{i_0} + \cdots + a_{i_j}p^{i_j}\right]
|\sC_{i_0, \ldots, i_j}| $$
$$+\sum_{i_0, \ldots, i_j}\left[\frac{a_{i_0}p^{i_0}}{|V_{a_{i_0}}|} + 
\cdots +
\frac{a_{i_j}p^{i_j}}{|V_{a_{i_j}}|} 
\right]|{\sC_{i_0, \ldots, i_j}}|  
   - \sum_{i_0,\ldots,i_j} a_mp^m |{\sC_{i_0, 
\ldots, i_j}}|.$$
By construction.
${\tilde \sB_{-1}} = F^{m*}{\hat \sB_{-1}}\tensor \sO(a_0+\cdots 
+a_{m-1}p^{m-1})$, where ${\hat \sB_{-1}} \subseteq \sV_{a_m}$ is a 
$G$-subbundle. By Lemma~\ref{l8}, we have $-\deg~{\hat \sB_{-1}} \geq 
a_m$.
Therefore
$$-\deg~\sB_{-1} \geq a_mp^m - (a_0+\cdots + a_{m-1}p^{m-1})|{\tilde 
\sB_{-1}}|.$$
Similarly 
$${\tilde \sC_0} = \bigoplus_{i_0=0}^{m-1}\sC_{i_0} = 
\bigoplus_{i_0=0}^{m-1}F^{i_0*}{\hat\sC_{i_0}}\tensor \sO(a_0+\cdots 
{\hat a_{i_0}p^{i_0}}+\cdots+ a_mp^m),$$
where ${\hat \sC_{i_0}}$ is a $G$-subbundle of $\sV_{a_m}$.
Therefore 
$$ -\deg~{\tilde \sC_0} \geq \sum_{i_0=0}^{m-1}\left[-(a_0+\cdots
+a_{m-1}p^{m-1})+a_{i_0}p^{i_0}\right]|{ \sC_{i_0}}| 
+ \sum_{i_0=0}^{m-1}(a_{i_0}p^{i_0}\delta_{i_0} - a_mp^m|{\sC_{i_0}}|),$$
where $\delta_{i_0} = 1$, if $C_{i_0} \neq 0$, otherwise $\delta_{i_0} = 
0$.
        
\begin{claim}\label{cm1} \begin{enumerate}
\item $\displaystyle{\frac{a_mp^m}{|\sV_{a_m}|}}|\sB_{i_0,\ldots, i_j}| - 
a_mp^m|\sC_{i_0,\ldots, i_j}|
\geq 0$.
\item $\displaystyle{\frac{a_mp^m}{|\sV_{a_m}|}}|\sB_{i_0}|
+ a_{i_0}p^{i_0}\delta_{i_0} - 
a_mp^m|\sC_{i_0}| \geq a_{i_0}p^{i_0}$.
\item 
$\displaystyle{\frac{a_{i_k}p^{i_k}}{|\sV_{a_{i_k}}|}}|\sB_{i_0,\ldots,i_k,\ldots, 
i_j}| \geq a_{i_k}p^{i_k}|\sB_{i_0,\ldots,{\hat i_k},\ldots, i_j}|$
\item 
$\displaystyle{\frac{a_{i_k}p^{i_k}}{|\sV_{a_{i_k}}|}}|\sC_{i_0,\ldots,i_k,\ldots, 
i_j}| \geq a_{i_k}p^{i_k}|\sC_{i_0,\ldots,{\hat i_k},\ldots, i_j}|$,
\end{enumerate}
where $\sB_{i_0,\ldots,{\hat i_k},\ldots, i_j} = 
\sB_{i_0, \ldots, i_{k-1}, i_{k+1}, \ldots, i_j}$ and
$\sC_{i_0,\ldots,{\hat i_k},\ldots, i_j} = 
\sC_{i_0, \ldots, i_{k-1}, i_{k+1}, \ldots, i_j}$ 
\end{claim}
\noindent{\it Proof}: We note that $C_{i_0,\ldots, i_j} = {\hat 
C}_{i_0,\ldots, i_j} 
\tensor z^{a_0+\cdots +{\hat a_{i_0}p^{i_0}}+\cdots + {\hat 
a_{i_j}p^{i_j}}+\cdots+a_mp^m} \subseteq V'\cap W$,
where ${\hat C _{i_0,\ldots, i_j}} \subseteq F^{i_0*}V_{a_{i_0}}\tensor
\cdots \tensor F^{i_j*}V_{a_{i_j}}$ is a $P$-submodule.
By Remark~\ref{r2}, this implies that 
$${\hat C_{i_0,\ldots, i_j}}\tensor F^{m*}V_{a_m}\tensor 
z^{a_0+\cdots +{\hat a_{i_0}p^{i_0}}+\cdots + {\hat 
a_{i_j}p^{i_j}}+\cdots+a_{m-1}p^{m-1}} \subseteq V'\cap W,$$
therefore
 $${\hat C_{i_0,\ldots, i_j}}\tensor F^{m*}V_{a_m}\tensor 
z^{a_0+\cdots +{\hat a_{i_0}}+\cdots + {\hat 
a_{i_j}}+\cdots+a_{m-1}p^{m-1}}\subseteq B_{i_0,\ldots, i_j}.$$
Hence 
\begin{equation}\label{e6} |B_{i_0,\ldots, i_j}|\geq 
|{\hat C_{i_0,\ldots, i_j}}||V_{a_m}| = |C_{i_0,\ldots, i_j}||V_{a_m}|.
\end{equation}
This proves assertion~(1). 

Similarly one can check
that
$$|B_{i_0,\ldots, i_k, \ldots, i_j}| \geq 
|V_{a_{i_k}}||B_{i_0,\ldots,{\hat i_k},\ldots, i_j}|~~\mbox{and}~~
 |C_{i_0,\ldots, {\hat i_k}, \ldots, i_j}| \geq 
|V_{a_{i_k}}||C_{i_0,\ldots,i_k,\ldots, i_j}|.$$
Hence assertions (3) and (4) follow.

Now, assertion~(2) follows, by inequality~(\ref{e6}), if $|C_{i_0}| \neq 
0$.
Therefore we can assume that  $|C_{i_0}| = 0$. We can also assume that 
$a_{i_0} \neq 0$.
By Remark~\ref{r2}, 
$$W(a_0+\cdot+ a_{m-1}p^{m-1}) \neq 0 \implies
W(a_0+\cdots+ a_{i_0-1}p^{i_0-1}+a_{i_0+1}p^{i_0+1}+\cdots  
+a_{m-1}p^{m-1}) \neq 0.$$
 Therefore, by Lemma~\ref{l7}, 
$$F^{i_0*}S^{a_{i_0}}(V_1)\tensor F^{m*}S^{a_{m}}(V_1)\tensor 
z^{a_0+\cdots 
+{\hat a_{i_0}p^{i_0}}+\cdots +a_{m-1}p^{m-1}} \subseteq B_{i_0}.$$
Hence $|\sB_{i_0}|\geq |S^{a_{i_0}}(\sV_1)||S^{a_m}(\sV_1)|$. But, for 
any 
interger $a\geq 1$, we can check the inequality 
$$\frac{|S^{a}(\sV_1)|}{|\sV_{a}|}\geq \frac{n}{n+a}.$$
 Therefore
$$\begin{array}{lcl}
 \displaystyle{\frac{a_mp^m}{|\sV_{a_m}|}|\sB_{i_0}|} & \geq &  
\displaystyle{\frac{a_mp^m}{|\sV_{a_m}|}|S^{a_{i_0}}(\sV_1)||S^{a_m}(\sV_1)|}\\
& \geq &
\displaystyle{\frac{na_mp^m}{(n+a_m)}|S^{a_{i_0}}(\sV_1)| ~~\geq~~ 
\frac{na_mp}{(n+a_m)}p^{m-1}|S^{a_{i_0}}(\sV_1)|~~\geq~~ 
a_{i_0}p^{i_0}},\end{array}$$
where the last inequality follows as $i_0\leq m-1$.
This proves the assertion~(2), and hence the claim.

Therefore 

$$-\deg~{\tilde \sB_{-1}} -\deg~{\tilde\sB_0}-\deg~{\tilde\sC_0} = 
[a_mp^m] 
+ 
\left[-(a_0+\cdots +a_{m-1}p^{m-1})|{\tilde\sB_{-1}}|+
\sum_{i_0=0}^{m-1}\frac{a_{i_0}p^{i_0}}{|V_{a_{i_0}}|}|{\sB_{i_0}}|\right]$$

$$ +\sum_{i_0=0}^{m-1}\left[-(a_0+\cdots
+a_{m-1}p^{m-1})+a_{i_0}p^{i_0}\right]\left[|{\sB_{i_0}}|+|\sC_{i_0}|\right]
+\sum_{i_0=0}^{m-1}\left[\frac{a_mp^m}{|V_{a_m}|}|\sB_{i_0}|
-a_mp^m|\sC_{i_0}|+a_{i_0}p^{i_0}\delta_{i_0}\right].$$

Now, by assertions~(3) and ~(2) of Claim~\ref{cm1}, applied respectively 
to 
the terms in 
second and fourth bracket above, we get 

$$-\deg~{\tilde \sB_{-1}} 
-\deg~{\tilde\sB_0}-\deg~{\tilde\sC_0}\hspace{7cm} $$
$$ \hspace{2cm}~\geq 
a_0 
+\cdots + a_mp^m +\sum_{i_0=0}^{m-1}\left[-(a_0+\cdots
+a_{m-1}p^{m-1})+a_{i_0}p^{i_0}\right]\left[|\sB_{i_0}|+|\sC_{i_0}|\right].$$
Moreover, applying assertion~(1) of Claim~\ref{cm1} , we get
$$-\deg~{\tilde \sB_1}-\deg~{\tilde\sC_1} = 
 \sum_{i_0,i_1}\left[-(a_0+\cdots
+a_{m-1}p^{m-1})+a_{i_0}p^{i_0} + a_{i1}p^{i_1}\right] +$$ 
$$ \displaystyle{\sum_{i_0,i_1}\left[\frac{a_{i_0}p^{i_0}}{|V_{a_{i_0}}|} + 
\frac{a_{i_1}p^{i_1}}{|V_{a_{i_1}}|}\right]\left[|\sB_{i_0i_1}| +
|\sC_{i_0i_1}|\right]}.$$

\vspace{5pt}

\begin{claim}\label{cm2} $$\sum_{i_0,i_1}
\left[\frac{a_{i_0}p^{i_0}}{|V_{a_{i_0}}|} + 
\frac{a_{i_1}p^{i_1}}{|V_{a_{i_1}}|}\right]\left[|\sB_{i_0i_1}| +
|\sC_{i_0i_1}|\right] 
 \geq  \sum_{i_0=0}^{m-1}\left[(a_0+\cdots
+a_{m-1}p^{m-1})-a_{i_0}p^{i_0}\right]\left[|\sB_{i_0}|+|\sC_{i_0}|\right].$$
\end{claim}
\noindent{\it Proof}:\quad~By assertion~(3) and (4) of 
Claim~\ref{cm1}, we have 

$$\begin{array}{lcl}
\mbox{left hand side} & \geq & \displaystyle{\sum_{i_0,i_1}\left[
a_{i_0}p^{i_0}(|B_{i_1}|+|C_{i_1}|)\right]+
\sum_{i_0,i_1}\left[a_{i_1}p^{i_1}(|B_{i_0}|+|C_{i_0}|)\right]}\\
{} & = & \displaystyle{\sum_{i_0,i_1}\left[
a_{i_0}p^{i_0}|B_{i_1}|+a_{i_1}p^{i_1}|B_{i_0}|)\right]+
\sum_{i_0,i_1}\left[
a_{i_0}p^{i_0}|C_{i_1}|+a_{i_1}p^{i_1}|C_{i_0}|)\right]}\\
&  = & \displaystyle{\sum_{i_0=0}^{m-1}\left[(a_0+\cdots
+a_{m-1}p^{m-1})-a_{i_0}p^{i_0}\right]\left[|\sB_{i_0}|+|\sC_{i_0}|\right]}.
\end{array}$$
This proves the claim.

In particular
$$-\deg~{\tilde\sB_{-1}}-\deg~{\tilde\sB_0}-\deg~{\tilde\sC_0}-
\deg~{\tilde\sB_1}-\deg~{\tilde\sC_1} 
\geq  \hspace{7cm}$$
$$ (a_0 + \cdots +a_mp^m) + 
\displaystyle{\sum_{i_0,i_1}[-(a_0+\cdots
+a_{m-1}p^{m-1})+a_{i_0}p^{i_0} + a_{i1}p^{i_1}][|\sB_{i_0i_1}| +
|\sC_{i_0i_1}|]}.$$

By assertion~(1) of Claim~\ref{cm1}, for $j\geq 2$, we have
$$-\deg{\tilde\sB_j}-\deg~{\tilde\sC_j}
 =   \sum_{i_0, \ldots, i_j}\left[\frac{a_{i_0}p^{i_0}}{|V_{a_{i_0}}|} 
+ 
\cdots + \frac{a_{i_j}p^{i_j}}{|V_{a_{i_j}}|} 
\right]\left[|{\sB_{i_0, \ldots, i_j}}|+|\sC_{i_0, \ldots, i_j}|\right] 
 $$
$$ + \sum_{i_0, \ldots, i_j} \left[-(a_0+\cdots 
+a_{m-1}p^{m-1})+a_{i_0}p^{i_0} + \cdots + a_{i_j}p^{i_j}\right]
\left[|\sB_{i_0, \ldots, i_j}|+|\sC_{i_0, \ldots, i_j}| \right]$$ 

Now one can check (similar to the proof of Claim~\ref{cm2}) 
that 
$$\sum_{i_0, \ldots, i_j}\left[\frac{a_{i_0}p^{i_0}}{|V_{a_{i_0}}|} + 
\cdots + \frac{a_{i_j}p^{i_j}}{|V_{a_{i_j}}|} 
\right]\left[|{\sB_{i_0, \ldots, i_j}}|+|\sC_{i_0, \ldots, i_j}|\right]  
  \geq \hspace{5cm} $$
$$ \sum_{i_0, \ldots, i_{j-1}} \left[(a_0+\cdots 
+a_{m-1}p^{m-1})-(a_{i_0}p^{i_0} + \cdots + a_{i_{j-1}}p^{i_{j-1}})\right]
\left[|\sB_{i_0, \ldots, i_{j-1}}|+|\sC_{i_0, \ldots, i_{j-1}}| \right].$$

 Now it follows that, for $0\leq j \leq m-1$,
$$-\deg~{\tilde\sB_{-1}} - (\deg~{\tilde\sB_{0}} + \deg~{\tilde\sC_0} )- 
\cdots -
(\deg~{\tilde\sB_j}+\deg~{\tilde\sC_j})\geq (a_0+\cdots 
+a_mp^m)\hspace{5cm} $$
$$ + \sum_{i_0,\ldots,i_j}\left[-(a_0+\cdots
+a_{m-1}p^{m-1})+a_{i_0}p^{i_0} + \cdots + a_{i_j}p^{i_j}
\right]\left[|\sB_{i_0\cdots i_j}| +
|\sC_{i_0\cdots i_j}|\right].$$
Therefore, for $j = m-1$ 
$$-\deg~{\tilde\sB_{-1}} - (\deg~{\tilde\sB_0} + \deg~{\tilde\sC_0})- 
\cdots -
(\deg~{\tilde\sB_{m-1}}+\deg~{\tilde\sC_{m-1}})\geq 
a_0+\cdots +a_mp^m.$$
This proves the lemma.
\end{proof}

We have immediate corollory of this lemma.

\begin{cor}\label{c3}Suppose $d = a_0 + a_mp^m$ is the $p$-adic expansion 
of the positive integer $d$. Suppose 
$\sW\subset 
\sV_d$ is a $G$-subbundle such that $W(a_0)\neq 0$. Then  
$-\deg(\sW) \geq  d = a_0+a_mp^m $. \end{cor}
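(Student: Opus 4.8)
The plan is to read off this statement as the special case of Lemma~\ref{l2} in which only the bottom and top $p$-adic digits of $d$ are nonzero. Since $d = a_0 + a_mp^m$ is written as its full $p$-adic expansion, all the intermediate digits vanish: $a_1 = a_2 = \cdots = a_{m-1} = 0$. The key observation is therefore purely arithmetic, namely that the index $a_0 + a_1p + \cdots + a_{m-1}p^{m-1}$ occurring in the hypothesis of Lemma~\ref{l2} collapses to $a_0$.

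First I would note that, by the definition $W(i) = W \cap (S^{d-i}(V_1)\tensor z^i)$, the equality of indices $a_0 = a_0 + a_1p + \cdots + a_{m-1}p^{m-1}$ forces $W(a_0) = W(a_0 + a_1p + \cdots + a_{m-1}p^{m-1})$. Hence the assumed nonvanishing $W(a_0) \neq 0$ is exactly the hypothesis $W(a_0 + \cdots + a_{m-1}p^{m-1}) \neq 0$ needed to invoke Lemma~\ref{l2}. The standing assumptions of that lemma, that $a_0$ and $a_m$ are nonzero, are automatic here, since $d = a_0 + a_mp^m$ is presented as a genuine two-term $p$-adic expansion.

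Applying Lemma~\ref{l2} then yields $-\deg(\sW) \geq a_0 + \cdots + a_mp^m$, and once more using that the middle digits are zero gives $a_0 + \cdots + a_mp^m = a_0 + a_mp^m = d$, which is the claimed bound. I expect no genuine obstacle: the entire content is the recognition that the corollary is the degenerate case $a_1 = \cdots = a_{m-1} = 0$ of the preceding lemma, so nothing beyond that specialization needs to be carried out.
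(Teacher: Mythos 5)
Your proposal is correct and coincides with the paper's own treatment: the paper presents Corollary~\ref{c3} as an ``immediate'' consequence of Lemma~\ref{l2}, exactly via the specialization $a_1 = \cdots = a_{m-1} = 0$, under which the hypothesis $W(a_0+\cdots+a_{m-1}p^{m-1})\neq 0$ reduces to $W(a_0)\neq 0$ and the conclusion reduces to $-\deg(\sW)\geq a_0+a_mp^m = d$. Your observation that the standing assumption ($a_0, a_m$ nonzero) is automatic for a genuine two-term expansion is also consistent with the paper's setup, which explicitly allows vanishing intermediate digits (e.g.\ noting $C_j(\sW)=\phi$ when $a_j=0$).
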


\begin{lemma}\label{l3}Suppose $\sW\subset 
\sV_d$ is a $G$-subbundle such that  $W(a_0)\neq 0$.
 Let $i$ be the integer such 
that  $ W(a_0+\cdots
+a_{i-1}p^{i-1}) \neq 0$
and $W(a_0+\cdots + a_ip^i) = 0$.  We 
further assume that $a_{i+1}, \ldots, 
a_m$ are nonzero integers. Then,
there exists $i \leq k\leq m$,
such that
 $$ -\deg~\sW \geq  (a_0+\cdots +
a_kp^k)\left[(h(a_{k+1}))({h}(a_{k+2})-1)\cdots
({h}(a_m)-1)\right],$$ where $h(t) = \dim~H^0(\bP^n_k, \sO_{\bP^n_k}(t))$.
\end{lemma}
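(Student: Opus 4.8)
The plan is to adapt the Frobenius tensor decomposition that drives the proof of Lemma~\ref{l2}, but to iterate it as a descent that peels off the top Frobenius level $m$, then $m-1$, and so on, with $k$ recording the level at which the descent terminates. Write $d_\ell = a_0+\cdots+a_\ell p^\ell$ for the partial sums, so the hypotheses read $W(d_{i-1})\neq 0$ and $W(d_i)=0$. At each stage I would pass to the subbundle $\sW'=W\cap V'$, where $V'$ is the kernel appearing in the $\coker(f)$ diagram of Lemma~\ref{l2}, and ask whether the reduced data satisfies the nonvanishing that lets Lemma~\ref{l2} (or the two–digit Corollary~\ref{c3}) fire and return the full weight. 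Because $W(d_i)=0$, the full bound $-\deg\sW\geq d$ of Lemma~\ref{l2} is unavailable, so the descent cannot terminate immediately; the index $k$ is then characterized by a nonvanishing/vanishing dichotomy among the $W(\,\cdot\,)$ at the partial $z$-degrees obtained from $d_{i-1}$ by reinstating the top digits $a_{i+1},\ldots,a_k$, monotonicity of nonvanishing (Remark~\ref{r2}) guaranteeing that such a $k$ exists with $i\leq k\leq m$.

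Next I would make the surviving piece of $W$ explicit. From $W(d_{i-1})\neq 0$, Lemma~\ref{l7} forces the minimal $SL(n)$-submodule at that $z$-degree into $W$, and Remark~\ref{r2} spreads it across lower $z$-degrees and higher symmetric powers through the canonical map $\phi$. The aim is to show that $\sW$ contains, up to a filtration by $G$-subbundles, a subbundle of the shape $\sV_{a_0+\cdots+a_kp^k}\tensor F^{*(k+1)}\bigl(H^0(\sO(a_{k+1}))\tensor\sO\bigr)\tensor\tensor_{j=k+2}^{m}F^{*j}\sV_{a_j}$: the bottom factor carries the weight $a_0+\cdots+a_kp^k$, the full trivial factor at level $k+1$ contributes the rank $h(a_{k+1})$, and the syzygy factors at levels $k+2,\ldots,m$ contribute the ranks $|\sV_{a_j}|=h(a_j)-1$. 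Corollary~\ref{c3} and Lemma~\ref{l2} would serve as the terminal case $k=m$, where the product is empty and the bound is $d$.

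With this subbundle in hand I would estimate $-\deg\sW\geq -\deg\sW'$ as in Lemma~\ref{l2}, and then bound $-\deg\sW'$ from below using Remark~\ref{r5} together with the semistability of symmetric powers and of their Frobenius pullbacks (Theorem~2.1 of [MR]). The degree of each tensor block splits exactly as in the $\tilde\sB_j,\tilde\sC_j$ computation of Lemma~\ref{l2}, and the coefficient $a_0+\cdots+a_kp^k$ should emerge as the slope contribution of $\sV_{a_0+\cdots+a_kp^k}$ while the product $h(a_{k+1})(h(a_{k+2})-1)\cdots(h(a_m)-1)$ is the product of the ranks of the remaining factors.

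The hard part will be the combinatorial bookkeeping that the choice of $k$ is designed to control. I must show that the vanishing at the level above $k$ forces exactly the full trivial factor $H^0(\sO(a_{k+1}))$ (rather than the smaller $\sV_{a_{k+1}}$) to appear at level $k+1$, while every block above level $k+1$ is genuinely full, so that the telescoping sum over blocks collapses to the single product claimed. This requires positivity of all discarded contributions, the analogues of the inequalities in Claim~\ref{cm1} and in particular the estimate $\frac{|S^a(\sV_1)|}{|\sV_a|}\geq \frac{n}{n+a}$ used there, together with a check that no cross term is negative. Pinning down the precise dichotomy defining $k$, and matching it to the boundary between the full $H^0$ factor and the syzygy factors, is where I expect the real work to lie.
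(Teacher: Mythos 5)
Your skeleton matches the paper's: peel off the top Frobenius level via the $\coker(f)$ diagram, note that $W(a_0+\cdots+a_ip^i)=0$ propagates (Remark~\ref{r2}) to $W(a_0+\cdots+a_{m-1}p^{m-1})=0$, so that $\sW'\subseteq \sV_{a_0+\cdots+a_{m-1}p^{m-1}}\tensor H^0(\sO(a_m))^{(p^m)}$ with $\mbox{gr}\,\sW'=\bigoplus_{i=0}^{a_m}\sA_{0i}\tensor F^{*m}(S^{a_m-i}(\sV_1)\tensor\sO(i))$, and then recurse on $\sA_{m-1}:=\sA_{00}$. But two decisive mechanisms are missing or misidentified. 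First, your characterization of $k$ is wrong: in the paper $k$ is \emph{not} detected by a vanishing/nonvanishing dichotomy among the $W(\,\cdot\,)$, but is the largest index with $-\deg\sA_k\geq a_0+\cdots+a_kp^k$, a degree threshold along the descent; its existence with $k\geq i$ is exactly what Lemma~\ref{l2} supplies (applied at level $i$, where $\sA_i(a_0+\cdots+a_{i-1}p^{i-1})\neq 0$). A $k$ defined by vanishing patterns of $W$ gives you no handle on $-\deg\sA_k$, which is the quantity that must carry the weight $a_0+\cdots+a_kp^k$ into the final product.

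Second, and more seriously, the factor $h(a_{k+1})$ does not arise because $\sW$ contains a full tensor factor $H^0(\sO(a_{k+1}))^{(p^{k+1})}$ --- nothing in the hypotheses forces such a containment, and containment alone cannot bound $-\deg\sW$ from below in any case, since graded pieces of $\sW$ at high $z$-degree contribute with the wrong sign (these are the $\sC$-type terms that Claim~\ref{cm1} has to offset in Lemma~\ref{l2}; here the analogous control comes from the nesting $\sA_{0j}\subseteq\sA_{0i}$ for $j>i$, via Lemma~\ref{r1}(2), which makes the telescoped sum $(\star)$ nonnegative). The paper's per-level output is a dichotomy obtained by case analysis on $W(a_mp^m)$ and on the largest $i$ with $|\sA_{00}|=\cdots=|\sA_{0i}|$: either $-\deg\sW\geq d$ outright, or $-\deg\sW\geq -(\deg\sA_{00})(h(a_m)-1)+\delta_m$ with the additive correction $\delta_m=\min\{-\deg\sA_{00},a_mp^m\}$. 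Iterating produces only factors $(h(a_j)-1)$, plus a telescoped string of $\delta$-terms; the upgrade from $(h(a_{k+1})-1)$ to $h(a_{k+1})$ is purely arithmetic: $\delta_{k+1}\geq\min\{a_0+\cdots+a_kp^k,\,a_{k+1}p^{k+1}\}=a_0+\cdots+a_kp^k$ because $a_{k+1}\neq 0$ forces $a_{k+1}p^{k+1}\geq p^{k+1}>a_0+\cdots+a_kp^k$, and adding $\delta_{k+1}\prod_{j\geq k+2}(h(a_j)-1)$ to $-\deg\sA_k\,(h(a_{k+1})-1)\prod_{j\geq k+2}(h(a_j)-1)$ yields the claimed bound. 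This is precisely where the hypothesis that $a_{i+1},\ldots,a_m$ are nonzero enters, and your proposal never isolates it; without the $\delta$-corrections your descent can only prove the weaker bound with $(h(a_{k+1})-1)$ in place of $h(a_{k+1})$.
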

\begin{proof}\quad By hypothesis it follows that $1\leq i \leq m$. If $i = 
m$ 
then, by Lemma~\ref{l2}, $-\deg\sW \geq 
a_0+a_1p+\cdots +a_mp^m$. Hence we can assume that $1\leq i\leq m-1$.

Let $f: H^0(\sO(a_0+\cdots+a_{m-1}p^{m-1}))\tensor
F^{m*}(\sV_{a_m})
 \by{} \sV_{a_0+\cdots+a_{m}p^m}$ be the canonical map.
 As in the proof of Lemma~\ref{l2}, we get a commutative 
diagram
of
$G$-bundles 
$$\begin{array}{c}
0{}\\
\downarrow\\
 0   \rightarrow  \sV_{(\oplus_{i=0}^{m-1}a_ip^i)}\tensor F^{*m}\sV_{a_m}
\rightarrow
\displaystyle{H^0(\sO(\oplus_{i=0}^{m-1}a_ip^i))}\tensor
F^{*m}\sV_{a_m} \rightarrow \sO(\oplus_{i=0}^{m-1}a_ip^i)\tensor
F^{*m}\sV_{a_m}  \rightarrow 0 \\
 \downarrow \\
\sV_{a_0+\cdots + a_mp^m} \\
 \downarrow \\
0  \longrightarrow \sV_{(\oplus_{i=0}^{m-1}a_ip^i)}\tensor\sO(a_mp^m)
  \longrightarrow 
\coker(f)  \longrightarrow 
\oplus~\sO_X  \longrightarrow  0\hspace{3cm}\\
\downarrow \\
 0
\end{array}$$

We note that, if $\sV'$ denotes the homogeneous subbundle given as 
$\sV' =$ kernel of the canonical composite map 
$$\sV_{a_0+\cdots+a_mp^m}\by{}\coker(f) \by{} \oplus~\sO_X $$ 
and $\sW'= \sW\cap \sV'$, then  
$-\deg{\sW} \geq -\deg~\sW'$.
Since $i \leq m-1$, we have $ W(a_0+\cdots+a_{m-1}p^{m-1}) = 0$. Therefore 
 $$\sW'\subseteq \sV_{a_0+\cdots+a_{m-1}p^{m-1}}\tensor
H^0(\sO(a_m))^{(p^m)},$$
as $G$-subbundle. Hence, using  Lemma~\ref{r1}, we see that $\sW'$ has a 
$G$-stable filtration with  
$$\mbox{gr}~\sW' = \bigoplus_{i=0}^{a_m}\sA_{0i}\tensor 
F^{*m}(S^{a_m-i}(\sV_1)\tensor
\sO(i)),$$
where $\sA_{0i} \subseteq \sV_{a_0+\cdots +a_{m-1}p^{m-1}}$ is a
homogeneous
$G$-subbundle. Note that, by 
Remark~\ref{r2} and Lemma~\ref{l7}, we have  $\sA_{00} \neq 0$.
We have $-\deg~\sW  \geq -\deg~\sW'$, where
$$-\deg\sW'
 =  \sum_{i=0}^{a_m}-(\deg~\sA_{0i})|S^{a_m-i}(\sV_1)| -
\sum_{i=0}^{a_m}|\sA_{0i}|~\deg~~F^{m*}(S^{a_m-i}(\sV_1) \tensor \sO(i))$$
$$ =  \sum_{i=0}^{a_m}-(\deg~\sA_{0i})|S^{a_m-i}(\sV_1)| -
p^m\sum_{i=0}^{a_m}|\sA_{0i}||S^{a_m-i}(\sV_1)|\left[~\mu(S^{a_m-i}(\sV_1))+\mu(\sO(i))\right]$$
\begin{equation}\label{e9}-\deg\sW'= 
\sum_{i=0}^{a_m}-(\deg~\sA_{0i})|S^{a_m-i}(\sV_1)| +
\frac{p^m}{n}\sum_{i=0}^{a_m}|\sA_{0i}||S^{a_m-i}(\sV_1)|(a_m-i-ni).
\end{equation}
Note that, by induction on $m$  we have 
$-\deg~\sA_{0i}\geq 0$,  for $0\leq i \leq a_m$.
Let
$$\begin{array}{lcl}
(\star) & = &
\displaystyle{\sum_{i=0}^{a_m}|\sA_{0i}||S^{a_m-i}(\sV_1)|(a_m-i-ni)}
\end{array}$$
Now applying the identity $(a+1)|S^{a+1}(\sV_1)| = n(|S^{a}(\sV_1)|+
\cdots + |S^{0}(\sV_1)|)$, we have 
$$\begin{array}{lcl}
(\star) 
& =  & n|S^{a_m-1}(V_1)|(|\sA_{00}|-|\sA_{01}|)\\
& & + n|S^{a_m-2}(V_1)|(|\sA_{00}|-|\sA_{02}| + |\sA_{01}|-|\sA_{02}|)\\
& & \vdots\\
& & + n|S^1(V_1)|\left(|\sA_{00}|-|\sA_{0(a_m-1)}| + \cdots +
|\sA_{0(a_m-2)}|-|\sA_{0(a_m-1)}|\right) + \\
& & n|S^0(V_1)|\left(|\sA_{00}|-|\sA_{0(a_m)}| + \cdots +
|\sA_{0(a_m-2)}|-|\sA_{0(a_m)}| +
|\sA_{0(a_m-1)}|-|\sA_{0(a_m)}|\right)\end{array}$$
Note that each term $|\sA_{0i}|-|\sA_{0j}|$ (with $j >i$) is always 
non-negative, as by Lemma~\ref{r1}~(2), we have $\sA_{0j}\subseteq 
\sA_{0i}$.

\vspace{5pt}
\noindent{Case~(1)} If $ W(a_mp^m) = 0$, then $\sA_{0a_m} = 0$; now 
we choose $1\leq 
j \leq a_m$ such
that $\sA_{0j} = 0$ and $\sA_{0(j-1)} \neq 0$. Then 
$$\begin{array}{lcl}
(\star) & \geq &  n|S^{a_m-j}(V_1)|(|\sA_{00}|-|\sA_{0j}|+\cdots +
|\sA_{0(j-1)}|-|\sA_{0j}|)+\\
 & & \cdots + n|S^0(V_1)|(|\sA_{00}|-|\sA_{0a_m}| 
+ \cdots + |\sA_{0(a_m-1)}|-|\sA_{0a_m}|)\\
&  = &  n\left(|\sA_{00}| + \cdots +
|\sA_{0(j-1)}|\right)(|S^{a_m-j}(\sV_1)|+
\cdots |S^0(\sV_1)|)\geq n^2\cdot j\cdot h^0(\sO(a_m-j)).\end{array}$$
Therefore $$\frac{p^m}{n}(\star) \geq p^mnjh^0(\sO(a_m-j)) \geq
p^m(a_m+1)\geq a_0 + \cdots + a_mp^m.$$

\vspace{5pt}

\noindent{Case~(2)}. Suppose $ W(a_mp^m) \neq 0$, so that $\sA_{0i} 
\neq 0$, for all $i$. Choose
$i$ to be the largest integer such that 
 $|A_{00}| = \cdots = |A_{0i}|$.
\begin{enumerate}
\item[(a)] If $i = a_m$ then $(\star) = 0$, and so  Equation~\ref{e9} 
becomes  
$$-\deg~\sW \geq -(\deg~\sA_{00})h^0(\sO(a_m)).$$
\item[(b)] If $i = a_m-1$, then 
$$-\deg~\sW  \geq  -(\deg~\sA_{00})(h^0(\sO(a_m))-1) + a_mp^m.$$
\item[(c)] If $i< a_m-1$ then
 $$\frac{p^m}{n}(\star) \geq p^mh^0(\sO(a_m-(i+1)))(i+1) \geq
(a_m+1)p^m.$$
\end{enumerate}

Hence we conclude, from case~(1) and case~(2) that either 
$$-\deg~\sW \geq  a_0+\cdots + a_mp^m  ~~\mbox{or}~~ 
-\deg~\sW \geq -(\deg~\sA_{00})(h(a_m)-1) +\delta_m,$$
 where
$ \delta_m = 
\min\{-\deg~\sA_{00}, a_mp^m\}.$
Let 
$$\sA_{m-1} := \sA_{00} \subseteq \sV_{a_0+\cdots + a_{m-1}p^{m-1}}.$$
Note that 
$$\sW(a_0) \neq 0 \implies \sW'(a_0) \neq 0 
\implies \sA_{00} \neq 0, ~~{i.e.}.~~\sA_{m-1}(a_0)\neq 0.
$$
 Then by replacing $\sW$, $\sW'$ and $\sV_d$ by the $G$-homogeneous
bundles $\sA_{m-1}$, $\sA'_{m-1}$
and $ \sV_{a_0+\cdots + a_{m-1}p^{m-1}}$ respectively
we have 
$$\mbox{gr}~\sA_{m-1}' = \bigoplus_{i=0}^{a_{m-1}}\sA_{1i}\tensor
F^{*m-1}(S^{a_{m-1}-i}(\sV_1)\tensor
\sO(i)),$$
where $\sA_{1i} \subseteq \sV_{a_0+\cdots +a_{m-2}p^{m-2}}$, for each $i$,
is a $G$-homogeneous subbundle.
 Then either 
$$-\deg~\sA_{m-1} \geq a_0+a_1p+\cdots+a_{m-1}p^{m-1}$$ 
or 
$$-(\deg~\sA_{m-1}) \geq -(\deg~\sA_{m-2})(h(a_{m-1})-1) +\delta_{m-1}, $$
where $\delta_{m-1} = {\rm min}\{-\deg~\sA_{m-2}, a_{m-1}p^{m-1}\}$.

Now inductively define $\sA_{m-i} = \sA_{(i-1)0}$. Equivalently, we define 
$A_{j-1}$
as a subset of $V_{a_0+a_1p+\cdots +a_{j-1}p^{j-1}}$ such that 
$$A_{j-1}\tensor
F^{*j}S^{a_j}(V_1)\tensor\cdots \tensor
F^{*m}S^{a_m}(V_1) \hspace{10cm}$$
$$\hspace{2cm} = \left(V_{a_0+\cdots +
a_{j-1}p^{j-1}}\tensor
F^{*j}S^{a_j}(V_1)\tensor\cdots \tensor
F^{*m}S^{a_m}(V_1)\right)\cap W.$$
Let $\delta_j = {\rm
min}\{-\deg~\sA_{j-1}, a_jp^j\}$.
 
Let $k$ be the largest integer such that  
$-\deg~\sA_{k} \geq a_0+\cdots +a_kp^k$. By Lemma~\ref{l2}, we have 
$k\geq i$. 
Now, for every $ l\geq k$, we have
$$-\deg~\sA_{l} \geq -\deg~\sA_{l+1}(h(a_{l})-1)+\delta_l. $$
 This implies that 
$$-\deg~\sW \geq -\deg~\sA_{k}
\left[(h(a_{k+1})-1)(h(a_{k+2})-1)\cdots
(h(a_{m})-1)\right] $$
$$ +\delta_{k+1}\left[(h(a_{k+2})-1)\cdots
(h(a_{m})-1)\right]  + \delta_{k+2}\left[(h(a_{k+3})-1)\cdots
(h(a_{m})-1)\right]+\cdots +\delta_m. $$
Since
$$\delta_{k+1} \geq  \min\{a_0+\cdots+a_kp^k, a_{k+1}p^{k+1}\}\geq 
a_0+a_1p+\cdots +a_kp^k,$$
as  $a_{k+1} \neq 0$,
 we have 
$$-\deg~\sW \geq (a_0+\cdots a_kp^k)
\left[(h(a_{k+1}))(h(a_{k+2})-1)\cdots
(h(a_{m})-1)\right].$$
This proves the lemma.
\end{proof}

\begin{remark}\label{r7} We can generalise the statement of 
Lemma~\ref{l3} as follows: Let $\sW\subset 
\sV_d$ be a $G$-subbundle such that  $W(a_0)\neq 0$.
 Let $i$ be the integer such 
that  $ W(a_0+\cdots
+a_{i-1}p^{i-1}) \neq 0$
and $W(a_0+\cdots + a_ip^i) = 0$. Then,
there exists $i \leq k\leq m$,
such that
 $$ -\deg~\sW \geq  (a_0+\cdots +
a_kp^k)\left[|S^{a_{k+1}}(V_1)||S^{a_{k+2}}(V_1)|\cdots
|S^{a_m}(V_1)|\right].$$
This follows from the argument given, in the above proof, that 
always (independent of the fact that some $a_j= 0$ or $\neq 0$, 
we have 
$$-\deg~\sW \geq a_0+\cdots + a_mp^m~~~\mbox{or}~~~-\deg~\sW \geq 
(-\deg~\sA_{00})|S^{a_m}(V_1)|.$$ 
Now, 
by induction on $m$, the result follows.
\end{remark}

\begin{cor}\label{c2}If $n\geq d/p$, then, for any $G$-subbundle 
$\sW\subset \sV_d$, we have $-\deg~\sW > d $.
\end{cor}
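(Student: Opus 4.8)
The plan is to reduce, as in the rest of the section, to a nonzero proper homogeneous $G$-subbundle $\sW\subsetneq\sV_d$ with $P$-module $W$, and to reformulate the goal degree-theoretically. Since $\deg\sV_d=-d$, for the quotient $\sV_d/\sW$ one has $-\deg\sW=d+\deg(\sV_d/\sW)$, so $-\deg\sW>d$ is exactly the statement $\deg(\sV_d/\sW)>0$. First I would record two consequences of $n\geq d/p$. Arithmetically, since $a_0\geq 1$ it is equivalent to $n\geq 1+a_1+a_2p+\cdots+a_mp^{m-1}$, so that $d/(n+1)<p$ and each nonzero digit $a_j$ yields a factor $|S^{a_j}(V_1)|=\binom{a_j+n-1}{n-1}\geq n$. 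Geometrically, each graded quotient $(S^{d-i}(\sV_1)\tensor\sO(i))/\sW(i)$ is a quotient of the semistable bundle $S^{d-i}(\sV_1)\tensor\sO(i)$ of slope $((n+1)i-d)/n$, whence $\deg(\sV_d/\sW)\geq \frac1n\sum_i((n+1)i-d)(|S^{d-i}(V_1)|-|W(i)|)$, and $d/(n+1)<p$ shows that the only negative contributions come from the low pieces $i<p$.

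The engine is the structural forcing of Remark~\ref{r2}: $W(i)\neq 0$ implies $W(j)\neq 0$ for every $j$ below $i$ in the digitwise order, with explicit lower bounds on $|W(j)|$. In quotient language this says $\sV_d/\sW$ cannot be concentrated in low $z$-weights, which is precisely what rules out the degree-decreasing quotients. Concretely I would split on whether $W(a_0)=0$. If $W(a_0)\neq 0$, I apply Remark~\ref{r7}: there is $i\leq k\leq m$ with $-\deg\sW\geq (a_0+\cdots+a_kp^k)[\,|S^{a_{k+1}}(V_1)|\cdots|S^{a_m}(V_1)|\,]$. For $k<m$ the bracket carries at least the factor $|S^{a_m}(V_1)|\geq n$ (as $a_m\neq 0$), and using the reformulated hypothesis to dominate the summands $a_jp^{j}$ of $d$ by the corresponding factors I would check $(a_0+\cdots+a_kp^k)\prod_{j>k}|S^{a_j}(V_1)|>d$ strictly; the case needing care is when intermediate digits vanish, where one must exploit that the value of $k$ produced by Remark~\ref{r7} is then correspondingly large. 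For the boundary value $k=m$ the bracket is empty and one only gets $-\deg\sW\geq d$; here I would use that $-\deg\sW$ is an integer, so equality would force $\deg(\sV_d/\sW)=0$, and then eliminate this possibility for proper $\sW$ using the structural forcing together with $n\geq d/p$.

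If instead $W(a_0)=0$, then by Remark~\ref{r2} every nonzero $W(i)$ has $i_0<a_0$, and I would re-run the estimate of Lemma~\ref{l6} but retaining the semistable slope bounds of Remark~\ref{r5} rather than passing to the cruder counts $|C_j(\sW)|$. Because $W(0)\neq 0$ always contains the smallest submodule $S^{a_0}(V_1)\tensor F^{*}(S^{a_1}(V_1))\tensor\cdots\tensor F^{*m}(S^{a_m}(V_1))$ of dimension $\prod_{j=0}^m|S^{a_j}(V_1)|$ (Lemma~\ref{l7}), and since $a_0,a_m\neq 0$ give $\prod_{j=0}^m|S^{a_j}(V_1)|\geq n^2>n$, the $z$-weight-zero piece alone already contributes more than $d$, namely $-\deg\sW(0)\geq\frac{d}{n}\prod_{j=0}^m|S^{a_j}(V_1)|>d$; the remaining pieces, whose high-weight part may be negative, are kept under control by the forcing of Remark~\ref{r2}, which supplies the compensating low-weight mass.

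The hard part will be exactly this last balancing: the slope bound by itself allows the high-$z$-weight subquotients $\sW(i)$ with $i>d/(n+1)$ to contribute negatively, and only the $P$-module structure of Remark~\ref{r2} couples them to enough low-weight mass to keep the total above $d$. I expect the delicate points to be the bookkeeping when some intermediate digits $a_j$ vanish (which shifts the index $k$ in Remark~\ref{r7}), and the upgrade from $-\deg\sW\geq d$ to the strict inequality in the boundary case $k=m$ — for which the integrality of $\deg\sW$, combined with the strict slack $\prod_{j=0}^m|S^{a_j}(V_1)|>n$ furnished by $n\geq d/p$, should suffice.
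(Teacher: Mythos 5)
Your main case ($W(a_0)\neq 0$) is essentially the paper's proof: after reducing via Lemma~\ref{l6} and Corollary~\ref{c3}, the paper applies Remark~\ref{r7} to obtain $k$ with $-\deg~\sW \geq (a_0+\cdots+a_kp^k)\,|S^{a_{k+1}}(V_1)|\cdots|S^{a_m}(V_1)|$, and then uses only the single factor $|S^{a_m}(V_1)|\geq n\geq d/p$ together with $a_0+a_kp^k\geq 1+p$ (here $a_k\neq 0$, $k\geq 1$ by the construction of $k$ in Lemma~\ref{l3}) to get $-\deg~\sW\geq (a_0+a_kp^k)(d/p)>d$ in one line; your digit-by-digit domination and your worry about vanishing intermediate digits are superfluous, since intermediate factors $|S^{a_j}(V_1)|$ are never used. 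Your handling of the boundary branch, however, contains a genuine non-argument: integrality cannot upgrade $-\deg~\sW\geq d$ to $-\deg~\sW>d$, because $d$ is itself an integer and equality is not excluded by parity of any kind. The paper does not attempt this upgrade; in the alternative branch of the dichotomy it settles for $-\deg~\sW\geq d$, which suffices for every application, since for proper $\sW$ one has $|W|<|V_d|$ and hence $\mu(\sW)\leq -d/|W| < -d/|V_d| = \mu(\sV_d)$ already from the non-strict bound (this is exactly how Lemma~\ref{l2} and Remark~\ref{r8} phrase and use the conclusion).

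The more serious gap is your case $W(a_0)=0$. The paper does not prove a degree bound there at all: it disposes of this case by citing Lemma~\ref{l6}, whose already-established conclusion is the slope inequality $\mu(\sW)<\mu(\sV_d)$, obtained by matching the counting bound $-\deg~\sW\geq |C_0(\sW)|+\cdots+|C_{m-1}(\sW)|+n|C_m(\sW)|$ against the estimate of Remark~\ref{r3} — this is precisely the dichotomy recorded in Remark~\ref{r8}. You instead propose to deduce $-\deg~\sW>d$ directly from the weight-zero contribution $-\deg~\sW(0)\geq \frac{d}{n}\prod_{j}|S^{a_j}(V_1)|>d$, but this single positive term does not control the high-weight pieces: for $i>d/(n+1)$ the semistability bound of Remark~\ref{r5} only gives $-\deg~\sW(i)\geq -\frac{(n+1)i-d}{n}|W(i)|$, which is negative, and Remark~\ref{r2} supplies lower bounds on dimensions of low-weight pieces but no upper bound on $|W(i)|$ for large $i$, so a priori the negative terms can swamp the weight-zero mass. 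The "compensating low-weight mass" you invoke is exactly what the careful bookkeeping in the proof of Lemma~\ref{l6} (passage to $\sW'\subseteq \sV_{a_0}\tensor H^0(\sO(a_1+\cdots+a_mp^{m-1}))^{(p)}$, the counting function $I_0$, and the estimates of the type in Claim~\ref{cm1}) is designed to produce, and even that machinery yields only the slope inequality, not $-\deg~\sW>d$. Since you acknowledge this balancing is missing rather than supplying it, your $W(a_0)=0$ case is unproven; the correct and short route is the paper's: quote Lemma~\ref{l6} for that case and reserve the hypothesis $n\geq d/p$ for the Remark~\ref{r7} branch.
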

\begin{proof}\quad By Lemma~\ref{l6}, we can assume that $W(a_0)\neq 0$. 
By Corollary~\ref{c3},  we can also assume that $m\geq 2$. 
Now, if $-\deg~\sW\ngeq a_0+\cdots + a_mp^m $, then, by 
Remark~\ref{r7}, 
there exists an integer $k$ with $1\leq k \leq m-1$
 such that 
$$-\deg~\sW\geq (a_0+\cdots +
a_kp^k)\left(|S^{a_{k+1}}(V_1)|\cdots
|S^{a_m}(V_1)|\right),$$ 
where $a_k \neq 0$ and $k\geq 1$,  which implies  
$$-\deg~\sW\geq (a_0+a_kp^k)n\geq (a_0+a_kp^k)(d/p) > d.$$ This proves the 
corollary.\end{proof}

\begin{remark}\label{r6} 
If along with the hypothesis of 
 Lemma~\ref{l3}, we have the additional
conditions, namely  $a_{k+1} = \ldots,a_{m-1} = 1$ and $p\leq n$, then 
it is easy to see that
$$-\deg~\sW \geq  (a_0+\cdots +
a_kp^k)h(a_{k+1})h(a_{k+2})\cdots h(a_{m}).$$\end{remark}

\begin{lemma}\label{l4}. Let $\sW \subset \sV_d$ be a $G$-subbundle such 
that $W(a_0)\neq 0$. If  
$p\leq n$ and $a_2,\ldots, a_m \geq 1$  
 then,  $-\deg~\sW \geq d$. \end{lemma}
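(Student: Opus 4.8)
The plan is to rerun the tower construction and recursion from the proof of Lemma~\ref{l3}, but to \emph{retain} the additive terms $\delta_l$ that are discarded there in passing to its stated bound; the hypothesis $p\leq n$ is exactly what makes these terms large enough to propagate the estimate all the way to the top. Write $S_l=a_0+a_1p+\cdots+a_lp^l$, so $S_m=d$. First I would dispose of the easy case: if $W(a_0+\cdots+a_{m-1}p^{m-1})\neq 0$ then Lemma~\ref{l2} already gives $-\deg\sW\geq d$. So I may assume this piece vanishes and let $i$ be the integer of Lemma~\ref{l3}, with $W(S_{i-1})\neq 0$ and $W(S_i)=0$; then $1\leq i\leq m-1$, and moreover $i\geq 2$ whenever $a_1=0$ (since $a_1=0$ forces $W(S_1)=W(a_0)\neq 0$, so $i\neq 1$).

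Next I would build the tower $\sA_{m}=\sW,\ \sA_{m-1},\ldots,\sA_i$ of Lemma~\ref{l3}, where $\sA_{l-1}\subseteq\sV_{S_{l-1}}$ is the ``$\sA_{00}$'' piece extracted from $\sA_l$, and set $b_l=-\deg\sA_l$. The case analysis of Lemma~\ref{l3} gives, for each $i<l\leq m$, the recursion
$$b_l\ \geq\ b_{l-1}\bigl(h(a_l)-1\bigr)+\min\{b_{l-1},\,a_lp^l\},$$
together with the base estimate $b_i\geq S_i$, which follows by applying Lemma~\ref{l2} to $\sA_i\subseteq\sV_{S_i}$ (its top graded piece is nonzero by the defining property of $i$). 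The conclusion $-\deg\sW=b_m\geq S_m=d$ will then follow from the claim that $b_l\geq S_l$ for every $i\leq l\leq m$, proved by induction on $l$ from the base $b_i\geq S_i$. Note I deliberately do not invoke the ``largest $k$ with $b_k\geq S_k$'' of Lemma~\ref{l3}: under $p\leq n$ the estimate is never lost, so the induction runs unobstructed from level $i$.

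For the inductive step I would record two elementary facts. Counting monomials gives $h(a)-1=\sum_{j=1}^{a}|S^j(V_1)|\geq a\,|S^1(V_1)|=an$ for $a\geq 1$; and since $a_{l-1}\geq 1$ for all $l-1\geq i$ (using $a_2,\ldots,a_m\geq 1$, together with $a_1\neq 0$ in the single case $i=1$), one has $S_{l-1}\geq p^{l-1}$. Now assume $b_{l-1}\geq S_{l-1}$. If $b_{l-1}\geq a_lp^l$, then $b_l\geq b_{l-1}(h(a_l)-1)+a_lp^l\geq S_{l-1}+a_lp^l=S_l$, using only $h(a_l)\geq 2$. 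If instead $b_{l-1}<a_lp^l$, the recursion collapses to $b_l\geq b_{l-1}h(a_l)\geq S_{l-1}+S_{l-1}(h(a_l)-1)\geq S_{l-1}+p^{l-1}\cdot a_ln\geq S_{l-1}+a_lp^l=S_l$, where the last inequality is precisely where $p\leq n$ is used. This closes the induction and proves the lemma.

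The main obstacle I anticipate is not the final estimate, which is short, but the bookkeeping needed to extract the recursion above cleanly from the proof of Lemma~\ref{l3} with the additive $\delta_l$ retained: that proof throws away $\delta_{k+2},\ldots,\delta_m$, and the resulting stated bound $S_k\,h(a_{k+1})\prod_{j\geq k+2}(h(a_j)-1)$ is in fact too weak here (for instance when $p=n=2$ and all middle $a_j=1$ it can fall just short of $d$). One must also track the indices forcing $a_{l-1}\geq 1$ and the degenerate case $a_1=0$, which is exactly what pushes $i$ up to at least $2$ and keeps $S_{l-1}\geq p^{l-1}$ honest throughout the tower.
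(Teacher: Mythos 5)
Your proof is correct, and it takes a genuinely different route from the paper's. The paper proves this lemma by using Lemma~\ref{l3} as a black box and splitting into two cases: when $a_{k+1}=\cdots=a_m=1$ it invokes the sharpened bound of Remark~\ref{r6} (full factors $h(a_j)$ in place of $h(a_j)-1$, valid precisely when $p\leq n$), and when some $a_t\geq 2$ it combines the stated bound of Lemma~\ref{l3} with crude digit estimates such as $h(a_t)-1\geq n(n+3)$. You instead re-open the tower inside the proof of Lemma~\ref{l3} and keep, at every level, the dichotomy it actually establishes --- in your notation ($S_l=a_0+\cdots+a_lp^l$, $b_l=-\deg\sA_l$): either $b_l\geq S_l$ outright (cases (1) and (2)(c) there, or Lemma~\ref{l2} applied at level $l$), or $b_l\geq b_{l-1}(h(a_l)-1)+\min\{b_{l-1},a_lp^l\}$ --- which the paper records but then truncates by discarding $\delta_{k+2},\ldots,\delta_m$; your base $b_i\geq S_i$ is exactly the paper's observation that $k\geq i$. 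Your single, localized use of $p\leq n$, via $h(a)-1\geq an\geq ap$ together with $S_{l-1}\geq p^{l-1}$ (and here your tracking of the degenerate case $a_1=0$, which forces $i\geq 2$ and keeps $a_{l-1}\geq 1$ throughout, is genuinely needed and correctly done), then closes a uniform induction $b_l\geq S_l$, eliminating both Remark~\ref{r6} and the case split on the digits. Your diagnosis that the bare statement of Lemma~\ref{l3} is too weak to yield this lemma is correct in substance and worth having, though your witness $p=n=2$ sits outside the standing assumption $n\geq 3$ of this section; the same shortfall occurs within it, e.g.\ for $p=n=3$ and $d=1+p^2+p^3+p^4+p^5=361$, where $k=2$ gives only $S_2\,h(1)\,(h(1)-1)^2=10\cdot 4\cdot 9=360<d$, which is exactly why the paper must appeal to Remark~\ref{r6}. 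The only imprecision --- a harmless one --- is that you state the recursion unconditionally, whereas the proof of Lemma~\ref{l3} yields it only in the second horn of the dichotomy above; since the first horn is the inductive conclusion itself, your induction goes through unchanged.
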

\begin{proof}By Lemma~\ref{l6}, we can assume that $W(a_0)\neq 0$.
Therefore, by Lemma~\ref{l3}, if $-\deg~\sW \ngeq a_0+\cdots+a_mp^m$,
then there exists $1\leq k \leq m-1$ such that 
$$-\deg~\sW \geq  (a_0+\cdots + a_kp^k){h}(a_{k+1})
({h}(a_{k+2})-1)
 \cdots ({h}(a_{m})-1)$$ 
Moreover if $a_{k+1} = \ldots = a_m = 1$ then, by Remark~\ref{r6}, we have 
$$-\deg~\sW \geq (a_0+\cdots + a_kp^k){h}(a_{k+1})
{h}(a_{k+2})
 \cdots {h}(a_{m}).$$
In this case 
$$\begin{array}{lcl}
-\deg~\sW & \geq & 
(a_0+\cdots + a_kp^k)h(a_{k+1})\cdots h(a_{m})\\ 
& \geq & (a_0+\cdots + a_kp^k)(n+1)^{m-k}\\
& \geq & (a_0+\cdots +a_kp^k)(p+1)^{m-k},\end{array}$$
which implies that $-\deg~\sW \geq a_0+\cdots +a_mp^m$.

If $a_t \geq 2$ for some $k < t \leq m$. Then $h(a_t)-1 \geq n(n+3)$. 
Therefore 
$$\begin{array}{lcl}
-\deg~\sW & \geq & 
(a_0+\cdots + a_kp^k)
a_{k+1}\cdots {\hat a_t}\cdots a_{m}(n(n+3))n^{m-k-1}\\
& \geq & a_kp^m(a_m+4)\geq a_o +a_1p+\cdots + a_mp^m.\end{array}$$ 
This proves the lemma.\end{proof}

\begin{lemma}\label{l5}Let $\sW \subseteq \sV_d$ be a $G$-subbundle such
that $p\geq n$ and $W(a_0)\neq 0$. Let $1\leq i\leq m-1$ be a 
nonnegative  integer
such that  $W(a_0+\cdots + a_{i-1}p^{i-1}) \neq 0$ and $W(a_0+\cdots + 
a_ip^i) = 0$. Moreover  
$$a_{i+1},
\ldots a_{m-1}, a_m \in  \{p-n+1, \cdots , p-1\}.$$ Then 
$-\deg~\sW \geq d$.
\end{lemma}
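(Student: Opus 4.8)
The plan is to run the same scheme as in the proof of Lemma~\ref{l4}, only replacing the numerical estimates there by ones adapted to the range $a_j\in\{p-n+1,\dots,p-1\}$. Since $p\geq n$ we have $p-n+1\geq 1$, so $a_{i+1},\dots,a_m$ are all nonzero and the hypotheses of Lemma~\ref{l3} are satisfied. Applying Lemma~\ref{l3}, either $-\deg\sW\geq a_0+\cdots+a_mp^m=d$ and we are finished, or there is an integer $i\leq k\leq m-1$ with
$$-\deg\sW\;\geq\;(a_0+\cdots+a_kp^k)\,h(a_{k+1})\prod_{j=k+2}^m\bigl(h(a_j)-1\bigr),\qquad h(t)=\binom{t+n}{n}.$$
Writing $S_k=a_0+\cdots+a_kp^k$, everything reduces to proving the purely numerical inequality $S_k\,h(a_{k+1})\prod_{j=k+2}^m(h(a_j)-1)\geq d$.

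The engine of the argument is the elementary estimate
$$h(a)-1=\binom{a+n}{n}-1\;\geq\;ap\qquad\text{for all } a\geq p-n+1.$$
To prove it I would write $h(a)-1=\sum_{s=1}^{a}|S^s(V_1)|=\sum_{s=1}^a\binom{s+n-1}{n-1}$; since $\binom{s+n-1}{n-1}\geq\binom{p}{n-1}\geq p$ for every $s\geq p-n+1$, the quantity $h(a)-1-ap$ is nondecreasing in $a$ on the range $a\geq p-n+1$, so it is enough to check the boundary value $a=p-n+1$. There the estimate becomes $\binom{p+1}{n}-1\geq (p-n+1)p$, which holds for $3\leq n\leq p$ (with equality when $n=p$) and which I would verify by a direct bound on the binomial coefficient. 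Granting this, every factor satisfies $h(a_j)-1\geq a_jp$ while $h(a_{k+1})\geq a_{k+1}p+1$, and the base step $S_k\,h(a_{k+1})\geq S_{k+1}$ is immediate from $S_k\geq a_kp^k\geq p^k$.

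The hard part, and where I expect the main obstacle, is to pass from these factorwise bounds to $S_k\cdot(\text{product})\geq d$. A naive telescoping $S_\ell(h(a_{\ell+1})-1)\geq S_{\ell+1}$ demands $S_\ell(h(a_{\ell+1})-2)\geq a_{\ell+1}p^{\ell+1}$, and because the estimate above is tight exactly when $n=p$ and $a_j=p-n+1=1$, this step can fail by a bounded amount along runs of minimal digits in the boundary regime $n=p$. When $k\geq i+1$ one has $a_k\geq p-n+1$, giving an extra factor $p-n+1\geq 1$ that makes the estimate go through comfortably; the delicate case is $n=p$ with several consecutive digits equal to $1$, where one cannot afford the crude bound $S_k\geq p^k$. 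The remedy I would adopt is to estimate globally instead of step by step, retaining the full value of $S_k$---in particular the contributions of the forced-nonzero digits $a_0$ and $a_i$ (note $a_i\neq 0$, else the two conditions $W(a_0+\cdots+a_{i-1}p^{i-1})\neq 0$ and $W(a_0+\cdots+a_ip^i)=0$ would coincide)---so that the slack produced by the early factors absorbs the deficit of the tight steps, as already happens in the genuinely multidigit instance $(1+p)(p+1)p=p(p+1)^2\geq 1+p+p^2+p^3$. Equivalently, one can keep the nonnegative terms $\delta_{k+2},\dots,\delta_m$ that were discarded at the end of the proof of Lemma~\ref{l3}, since these supply precisely the missing positive quantities.
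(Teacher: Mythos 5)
Your skeleton is the same as the paper's: invoke Lemma~\ref{l3} to get either $-\deg\sW\geq d$ or, for some $i\leq k\leq m-1$, the bound $-\deg\sW\geq S_k\,h(a_{k+1})\prod_{j=k+2}^m(h(a_j)-1)$ with $S_k=a_0+\cdots+a_kp^k$. But the paper's first move, which you omit, is to dispose of $p\leq n$ by Lemma~\ref{l4} and then work strictly with $p>n$. There every tail digit satisfies $a_j\geq p-n+1\geq 2$, and the paper uses the much stronger factor bounds $h(a_{k+1})\geq\binom{p+1}{n}\geq\frac{p(p+1)}{2}$, $h(a_j)-1\geq\binom{p}{n-1}\geq\frac{p(p-1)}{2}$ for $k+2\leq j\leq m-1$, and $h(a_m)-1\geq pa_m$ (with a separate trivial case $k=m-1$ via $h(a_m)\geq(p+1)a_m$); since each interior factor is then of order $p^2/2$ rather than $p$, the crude estimate $S_k\geq a_kp^k$ already yields $-\deg\sW\geq a_ka_m\frac{p+1}{2}p^m\geq d$ with room to spare, and no telescoping or global bookkeeping is needed. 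You instead keep the boundary regime $n=p$ in play and rely on the linear bound $h(a)-1\geq ap$ (which is correct, and tight at $n=p$), and that is exactly where your argument breaks.

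The concrete failure: your central claim that ``everything reduces to the purely numerical inequality $S_k\,h(a_{k+1})\prod_{j=k+2}^m(h(a_j)-1)\geq d$'' is false at $n=p$, even retaining the full value of $S_k$. Take $n=p$, $m=5$, $a_0=a_3=a_4=a_5=1$, $a_1=a_2=0$; this is admissible for Lemma~\ref{l5} (only $a_{i+1},\dots,a_m$ are constrained, your observation forces merely $a_i=a_3\neq 0$), with $i=3$, and Lemma~\ref{l3} may return $k=3$. Then $S_3\,h(a_4)(h(a_5)-1)=(1+p^3)(p+1)p=p^5+p^4+p^2+p$, while $d=p^5+p^4+p^3+1$, a deficit of $(p-1)^2(p+1)$ --- growing with $p$, not ``bounded,'' and not absorbed by the slack in the early factors (the failure needs a run of zero digits below $i$, which your hypotheses permit). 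Your fallback of keeping the discarded terms $\delta_{k+2},\dots,\delta_m$ from the proof of Lemma~\ref{l3} is \emph{not} equivalent to the global $S_k$-estimate, as you assert; it is strictly stronger, and it does repair this example (the retained terms give $(1+p^3)(p+1)^2\geq(1+p^3)(1+p^2)=d$), but you never carry out the induction needed to show it suffices in general, so the delicate case of your plan remains unproven. One further caution: the paper's own reduction ``by Lemma~\ref{l4} we may assume $p>n$'' silently uses $a_2,\dots,a_m\geq 1$, which the hypotheses of Lemma~\ref{l5} do not supply when $i\geq 3$ (my example has $a_2=0$); this is harmless in the intended application, Theorem~\ref{c1}(3b), where all of $a_2,\dots,a_m\geq p-n+1$, but it means the boundary configuration you tried to handle numerically genuinely requires either that stronger hypothesis or the completed $\delta$-argument, not the product inequality you proposed.
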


\begin{proof} By Lemma~\ref{l6}, we can assume that $W(a_0)\neq 0$. 
Moreover, by Lemma~\ref{l4}, we can assume that $p
>n$. If $\deg~\sW \geq a_0+\cdots +a_mp^m$ then the lemma follows. Hence, 
by 
Lemma~\ref{l3}, there exists an  integer $k$ such that $i\leq k \leq m-1$ 
and 
$$-\deg~\sW \geq (a_0+\cdots +
a_kp^k){h}(a_{k+1})(h({a_{k+2}})-1)
\cdots (h(a_{m})-1).$$
  Note that, by hypothesis,
 $a_m \geq p-n+1$,  which 
implies that $h(a_m) \geq
(p+1)a_m$.
Therefore 
$$-\deg~\sW \geq (a_kp^k){h}(a_{k+1})({ 
h}(a_{k+2})-1)\cdots (h(a_{m})-1).$$
\begin{enumerate}
\item If $1\leq k\leq m-2$ then 
$$\begin{array}{lcl}
-\deg~\sW & \geq & (a_kp^k)
{h}(a_{k+1})({h}({a_{k+2}})-1)
\cdots (h(a_m)-1) \\
& \geq &
\displaystyle{a_kp^k{\binom{p-n+1+n}{n}}
{\binom{p-n+1+n-1}{n-1}}^{m-2-k}(pa_m)} \\
& \geq &
\displaystyle{a_kp^k\frac{p(p+1)}{2}
\left(\frac{p(p-1))}{2}\right)^{m-2-k}
(pa_m)}  \\
 & \geq & a_mp^m\displaystyle{\frac{(p+1)}{2}}~~\mbox{as $p> n \geq 3$}\\
&  \geq & a_0+\cdots + a_mp^m.
\end{array}$$
\item If $k = m-1 $ then 
$$\begin{array}{lcl}
-\deg~\sW & \geq & 
(a_0+\cdots + a_{m-1}p^{m-1}){h}(a_m) \\
{} & \geq & (a_0+\cdots + a_{m-1}p^{m-1})((p+1)a_m)\\
{} & \geq & a_0+\cdots + a_mp^m = d.\end{array} $$
\end{enumerate}
This proves the lemma. \end{proof}

\begin{remark}\label{r8} Let  $\sV_d$ be a bundle on $\bP^n_k$  
with  $n\geq 3$ and let $\sW\subseteq \sV$ 
be a $G$-subbundle. Then we have proved 
\begin{enumerate}
\item if $W(a_0) = 0$, then, 
by Lemma~\ref{l6}, 
$$-\deg~\sW \geq  (|C_{0}(\sW)|+ \cdots + |C_{m-1}(\sW)|)
+ n|C_{m}(\sW)|,$$
where $C_i(\sW)$ is defined as in Remark~\ref{r3}, 
\item if $W(a_0)\neq 0$ and satisfies the hypothesis of 
Corollary~\ref{c3}, Corollary~\ref{c2}, Lemma~\ref{l4} or Lemma~\ref{l5}, 
then $$-\deg~\sW \geq a_0+a_1p+\cdots +a_mp^m.$$
 \end{enumerate}
In other words if $\sW\subseteq \sV_d$ is homogeneous $G$-subbundle 
satisfying the hypothesis of 
Corollary~\ref{c3}, Corollary~\ref{c2}, Lemma~\ref{l4} or Lemma~\ref{l5}, 
then, either 
\begin{enumerate}
\item $-\deg~\sW \geq a_0+a_1p+\cdots +a_mp^m$ or
\item $-\deg~\sW \geq  (|C_{0}(\sW)|+ \cdots + |C_{m-1}(\sW)|)
+ n|C_{m}(\sW)|.$
\end{enumerate}
\end{remark}

\section{Main results}

\noindent{\underline {Proof of Theorem~\ref{c1}}}.\quad 
If $\bP^n_k = \bP^2_k$ then stability of the bundle $\sV_d$  follows by 
Proposition~\ref{p1}. Hence the statement~(1).

 Therefore we can assume 
that $n\geq 3$. 
Let $\sW\subseteq \sV_d$ be a homogenous subbundle. 
Let $d = (a_o+a_1p+\cdots+a_mp^m)p^{i_0}$ be the integer satisfying the 
hypotheses of the theorem. We write $a_j$ as $b_{i_0+j}$, in particular we 
we write $d = (b_{i_0}+\cdots +b_{i_0+m}p^{m})p^{i_0}$, where $b_{i_0}$ 
and $b_{i_0+m}$ are positive integers.
Consider the
following
diagram of $G$-bundles:
$$\begin{array}{ccccccccc}
 & &  0 & {} & 0 & {} &  0 & &  \\
& &  \downarrow{} & {} & \downarrow{} & {} &  \downarrow{} & & \\
0 & \rightarrow & F^{*i_0}(\sV_{d'})
& \rightarrow{}  &
H^0(\sO(d'))^{(p^{i_0})}\tensor\sO_{{\mathbf
P}^n_k} & \rightarrow
&  \sO(d'p^{i_0}) & \rightarrow & 0 \\
 & &  \downarrow{f} & {}&   \downarrow{} & {}&   \downarrow{} & &  \\
 0 & \rightarrow & \sV_{d'p^{i_0}} &
 \rightarrow &
H^0(\sO(d'p^{i_0}))\tensor\sO_{{\mathbf
P}^n_k} & \rightarrow
 & \sO(d'p^{i_0}) & \rightarrow & 0 \\
& &  \downarrow{} & {} & \downarrow{} & {} & \downarrow{} & &  \\
& & \coker~(f) & {} & \oplus~\sO_{{\mathbf
P}^n_k}  & & {} 0 & & \\
 & &  \downarrow{} &  {} & \downarrow{}&  & {} & & {} \\
& &  0 & {} & 0 & {}  & {} & &  \\
\end{array}$$
where $d' = a_{0}+\cdots + a_{m}p^{m} = b_{i_0}+\cdots +b_{i_0+m}p^{m}$ 
and $H^0(\sO(a))$
denotes
$H^0({\bP^n}_k, \sO_{{\mathbf
P}^n_k}(a))$.
Therefore, for
any $G$-subbundle $\sW\subseteq \sV_d $,
we have
$$-\deg~\sW \geq -\deg~(\sW\cap
F^{*i_0}(\sV_{b_{i_0}+\cdots +b_{i_0+m}p^{m}})).$$
Then, by Lemma~\ref{r1},
we have $\sW\cap
F^{*i_0}(\sV_{b_{i_0}+ \cdots + b_{i_0+m}p^m}) = F^{*i_0}(\sW_1)$, 
for some $G$-subbundle
$\sW_1 \subseteq  \sV_{b_{i_0}+ \cdots + b_{i_0+_m}p^m}$ and
corresponding $P$-submodule $W_1\subseteq V_{b_{i_0}+ \cdots +
b_{i_0+m}p^m}$.
 
Now, if for some
$$j_0+j_1p+\cdots + j_{i_0-1}p^{i_0-1}+j_{i_0}p^{i_0}+ \cdots +
j_{i_0+m}p^{i_0+m} < b_{i_0}
p^{i_0}+ \cdots + b_{i_0+m}p^{i_0+m}$$
 we have
$$ W(j_0+j_1p+\cdots + j_{i_0-1}p^{i_0-1}+
j_{i_0}p^{i_0}+ \cdots + j_{i_0+m}p^{i_0+m}) \neq 0$$
then, by part~(1) of Remark~\ref{r2},
$$ W(j_{i_0}p^{i_0}+ \cdots + j_{i_0+m}p^{i_0+m}) \neq
0,$$ and hence by part~(2) of Remark~\ref{r2},
$$W_1(j_{i_0}+
\cdots + j_{i_0+m}p^{m}) \neq 0. $$
Therefore, if
$$\begin{array}{lcl}
C_{i_0}(\sW) & = & \{(j_0, \ldots, j_{i_0},
b_{i_0+1}, \ldots,
b_{i_0+m})\mid 0\leq j_{i_0} < b_{i_0},~~\mbox{and}~~ W_{j_0, \ldots,
j_{i_0},
b_{i_0+1}, \ldots, b_{i_0+m}} \neq 0\}\\
{} & \vdots & {} \\
C_{i_0+m}(\sW) & = & \{\{(j_0, \ldots, j_{i_0-1},j_{i_0}, \ldots,
j_{i_0+m})\mid 0\leq j_{i_0+m} < b_{i_0+m},~~\mbox{and}~~
W_{j_0, \ldots, j_{i_0+m}} \neq 0 \}\end{array}$$
 Then, $C_k(\sW) = \phi $, for $k < i_0$, and therefore 
$$\mbox{gr}~\sW =\bigoplus_{(j_0,\ldots, j_{i_0+m})\in
C_{i_0}(\sW)\cup\cdots
\cup
C_{i_0+m}(\sW)}\sW_{j_0,\cdots,
j_{i_0+m}}$$
and by Remark~\ref{r3},
$$-\mu(\sV_d)|W| \leq  (|C_{i_0}(\sW)|+ \cdots + |C_{i_0+m-1}(\sW)|)
+ n|C_{i_0+m}(\sW)|.$$
 On the other hand
$$\begin{array}{lcl}
C_{0}(\sW_1) & = & \{ (j_{i_0}, b_{i_0+1}, \ldots,
b_{i_0+m})\mid 0\leq j_{i_0} < b_{i_0},~~\mbox{and}~~(W_{1})_{j_{i_0},
b_{i_0+1}, \ldots, b_{i_0+m}}\neq 0\}\\
 & \vdots & {} \\
C_{t}(\sW_1) & = & \{(j_{i_0}, \ldots,  j_{i_0+t}, b_{i_0+{(t+1)}},
\ldots, b_{i_0+m})\mid 0\leq
j_{i_0+t} < b_{i_0+t},\\
 & & ~~\mbox{and}~~
(W_{1})_{j_{i_0}, \ldots, j_{i_0+t}, b_{i_0+{(t+1)}}, \ldots, b_{i_0+m}}
\neq
0\}\\
& \vdots & \\
C_{m}(\sW_1) & = & \{(j_{i_0}, \ldots,  j_{i_0+m})\mid 0\leq
j_{i_0+m} < b_{i_0+m},~~\mbox{and}~~
(W_{1})_{j_{i_0}, \ldots, j_{i_0+m}} \neq 0\}\end{array}$$
then
$$|C_{i_0}(\sW)| \leq p^{i_0}|C_{0}(\sW_1)|, \ldots,
|C_{i_0+m}(\sW)| \leq p^{i_0}|C_{m}(\sW_1)|.$$
 
Since $\sW_1\subseteq \sV_{b_{i_0}+ \cdots + b_{i_0+m}p^m}$  is 
a homogeneous $G$-subbundle 
satisfying the hypothesis of 
Corollary~\ref{c3}, Corollary~\ref{c2}, Lemma~\ref{l4} or Lemma~\ref{l5}, 
by Remark~\ref{r8}, we have either
$$\begin{array}{lcl}
-\deg~\sW_1 &  \geq & b_{i_0}+ \cdots + b_{i_0+m}p^m
\hspace{2cm}~~\mbox{or}~~\\
-\deg~\sW_1 & > & n(|C_{m}(\sW_1)|) + (|C_{0}(\sW_1)|+ \cdots +
|C_{m-1}(\sW_1)|).\end{array}$$

Since  $-\deg~\sW \geq p^{i_0}(-\deg~\sW_1)$, the above inequalities imply
that
$$\begin{array}{lcl}
-\deg~\sW & \geq & b_{i_0}p^{i_0}+ \cdots +
b_{i_0+m}p^{i_0+m} ~~\mbox{or}~~\\
-\deg~\sW & > &  np^{i_0}|C_{m}(\sW_1)| +
(p^{i_0}|C_{0}(\sW_1)|+
\cdots + p^{i_0}|C_{m-1}(\sW_1)|)\\ 
& \geq &  n|C_{i_0+m}(\sW)| + (|C_{i_0}(\sW)|+ \cdots + |C_{i_m}(\sW)|) \\
 & \geq & -\mu(\sV_d)|W|,\end{array}$$
where the last line follows from Remark~\ref{r3}.
Hence, in both the cases,  if $\sW \subset \sV$ then $\mu(\sW) < 
\mu(\sV_d)$. 

Now, due to the uniqueness property of the Harder-Narasimhan filtration,  
the destabilizing subbundle $\sW$ of
$\sV_d$
is a homogeneous $G$-subbundle such that $\mu(\sW) > \mu(\sV_d)$,
which
contradicts the result above.
In particular $\sV_d$ is semistable.
Now suppose $\sV_d$ is not stable 
then it has a subbundle $\sV' \subset \sV_d$ such that $\mu(\sV') =
\mu(V_d)$. Now Socle$(\sV')$ is the unique polystable subbundle
of same slope, containing
$\sV'$. In particular $\mbox{Socle}~(\sV')$ is homogeneous $G$-subbundle
of
same slope as $\sV_d$. Hence $\sV_d = \mbox{Socle}~(\sV')$ is polystable.

Now since $H^0(\bP^n_k,\sEnd(\sV_d))=k$, by a simple calculation we 
conclude that, $\sV_d$
must be stable.

This proves the theorem. $\Box$
 
\vspace{5pt}

\noindent{\it Proof of Proposition~\ref{p2}}\quad If $\sV_d^*$ is 
semistable then 
$$\mu_{max}(\sV_d^*) = \mu(\sV_d^*) = \frac{d}{{\binom{d+n}{d}}-1}.$$
Hence we assume that $\sV_d^*$ is not semistable. Let 
$$0 = \sU_0 \subset \sU_1\subset \cdots \subset \sU_l = \sV_d^*$$ 
be the Harder-Narasimhan filtration of $\sV_d^*$. 
Then, by definition,  $\mu_{max}(\sV_d^*) = \mu(\sU_1)$.
Note that  $\sU_1$ is a $G$-subbundle of $\sV_d^*$,  and therefore  there 
exists  corresponding $P$-submodule, say, $U_1$ of $V_d$.
Consider the short exact sequence of $G$-bundles 
$$0\by{} \sU_1 \by{} \sV_d^* \by{} \sE \by{} 0,$$
Taking dual of this, we get a short exact sequence of $G$-bundles
$$ 0\by{} \sE^* \by{} \sV_d^{**} = \sV_d  \by{} \sU_1^* \by{} 0.$$ 
Now let us denote the $G$-subbundle 
$\sE^*$ by $\sW$ and let 
 $W$ be the corresponding $P$-submodule of $V_d$. 
Now, the first inequality of the proposition follows from 
hypothesis that 
$\mu(\sV_d^*) < \mu(\sU_1) = \mu_{max}(\sV_d^*) $.
Moreover 
$$\mu(\sU_1) > \mu(\sV_d^*)\implies \mu(\sE) < \mu(\sV_d^*) 
\implies  \mu(\sW) > \mu(\sV_d).$$ 
Now, by Lemma~\ref{l6}, we have 
$W(a_0) \neq 0$ and therefore, by Remark~\ref{r7}, we have  
$\deg~\sW < 0$. This gives 
 $$\deg~\sU_1  =  \deg~\sV_d^* -\deg~\sE =  \deg~\sV_d^* + \deg~\sW 
< d.$$

 By Lemma~\ref{l2},
$W(a_0+\cdots + a_{m-1}p^{m-1}) = 0$, as $\mu(\sW) > \mu(\sV_d)$.
Hence, by Remark~\ref{r2}, for every 
$0\leq i_m \leq a_m-1$, we have 
$$W_{a_0, \ldots, a_{m-1}, i_m} = W(a_0+\cdots+ a_{m-1}p^{m-1}+i_mp^m) = 
0.$$ 
Now
$$\begin{array}{lcl}
|U_1| & = & |V_d| - |W|\\
 & = & \displaystyle{\sum_{\{(i_0,\ldots, i_m)\mid 0\leq i_j\leq p-1, 
~~i_0+\cdots + i_mp^m < d\}} 
|V_{i_0, \ldots, i_m}| - |W_{i_0,\ldots,i_m}|}
\end{array}$$
As $|V_{i_0, \ldots, i_m}| - |W_{i_0,\ldots,i_m}| \geq 0$, for every tuple
$(i_0, \ldots, i_m)$, this implies
$$\begin{array}{lcl}
|U_1| & \geq & \displaystyle{\sum_{\{i_m \mid 0\leq i_m \leq 
a_m-1\}}|V_{a_0,\ldots,a_{m-1},i_m}| - |W_{a_0,\ldots,a_{m-1},i_m}|}\\
& \geq & \displaystyle{\sum_{0\leq i_m \leq a_m-1}|V_{a_0, \ldots , 
a_{m-1}, i_m}|}
= |S^{a_mp^m}(V_1)| + \cdots + |S^{p^m}(V_1)| \geq 
|S^{{\lceil{d/2}\rceil}}(V_1)|.
\end{array}
$$
In particular 
$\mu(\sU_1)\leq d/|S^{\lceil{d/2}\rceil}(V_1)|$. 
 Hence the proposition. $\Box$

\vspace{7pt}

\noindent{\it Proof of Corollary~\ref{c4}}\quad By the proof of 
Theorem~\ref{t1} of Langer; if 
$$d > \frac{r-1}{r}\Delta(E)H^{n-2}+\frac{1}{r(r-1)H^n}$$ and $m$ the 
least 
integer such that the quotients of the Harder-Narasimhan filtration of the 
restriction 
of 
$E$ to a very general divisor in $dH$ are strongly semistable, then for 
general hypersurface $D$ in $|dH|$, we have 
$$ \frac{d}{\max\{\frac{r^2-1}{4}, 1\}} \leq \mu_i(F^{m*}E|_D)-
\mu_{i+1}(F^{m*}E|_D) \leq H^n\cdot\mu_{max}(\sV_d^*|_D).$$
By Proposition~\ref{p2}, this implies 
$$ \frac{d}{\max\{\frac{r^2-1}{4}, 1\}} \leq \mu_i(F^{m*}E|_D)-
\mu_{i+1}(F^{m*}E|_D) \leq H^n\cdot 
\frac{d^2}{\binom{ {\lceil{d/2}\rceil}+n-1}{{\lceil{d/2}\rceil}}}.$$
Moreover, for $n =2$, by Proposition~\ref{p1},
we have
$$ \frac{d}{\max\{\frac{r^2-1}{4}, 1\}} \leq \mu_i(F^{m*}E|_D)-
\mu_{i+1}(F^{m*}E|_D) \leq H^n\cdot 
\frac{d^2}{\binom{d+n}{d}-1}.$$  
Now, for $d$ such that 
\begin{enumerate}
\item for $n = 2$, the inequality 
$$\frac{\binom{d+n}{d}-1}{d} > H^n\cdot \max\{\frac{r^2-1}{4}, 
1\}+1$$
holds and
\item for $n\geq 3$, 
the inequality  
$$\frac{\binom{ {\lceil{d/2}\rceil}+n-1}{{\lceil{d/2}\rceil}}}{d}
> H^n\cdot \max\{\frac{r^2-1}{4}, 1\}+1$$
\end{enumerate}
holds, we have a contradiction. In particular, for $d$ satisfying 
the 
hypothesis of the corollary, $E_D$ is 
strongly semistable, for a very 
general $D\in |dH|$.$\Box$


\begin{thebibliography}{}
\bibitem[B]{B}{A. Borel}, {\it Linear representations
of semisimple algebraic groups} in {\it Algebraic
Geometry, Arcata 1974}, Proc, Symp. in Pure Math. Vol
29, A.M.S. (1975), 421-440. 
\bibitem[F]{F}{Flenner, H.}, {\it Restrictions of semistable
bundles on projective varieties}, Comment. Helv. {\bf 59} (1984), 635-650.  
\bibitem[H]{H}{Hartshorne, R.}, {\it Algebraic Geometry},
Springer-Verlag, Heidelberg (1977)
\bibitem[L]{L}{Langer, A.}, {\it Moduli spaces of sheaves and principal 
$G$-bundles}, Proceedings of Symposia in Pure mathematics.
\bibitem[Ma]{Ma}{Maruyama, M.}, {\it Boundedness of semistable sheaves of
small ranks}, Nagoya Math. J., Vol.~78 (1980), 65-94.
\bibitem[MR]{MR}{Mehta, V. B. and Ramanathan, A.}, {\it
Semistable sheaves on projective varieties and their restriction to
curves}, Math. Ann.  {\bf 258}  (1981/82), no. 3, 213--224.
\bibitem[MT]{MT}{Mehta, V. B. and Trivedi, V.}, {\it Restriction theorems
for homogeneous bundles}, Bull. Sci. Math.  131  (2007),  no. 4, 
397--404. 
\bibitem[PR]{PR}{Paranjape,K.; Ramanan, S.}, {\it On the
canonical ring of
a curve}, Algebraic geoemtry and commutative algebra, Vol. II, 503-516,
Kinokuniya, Tokyo (1988)

\end{thebibliography}
 \end{document}